\tikzstyle{vertex}=[circle, inner sep=1pt, minimum size=4pt]
\tikzstyle{directed}=[postaction={decorate,
\newcommand{\vertex}{\node[vertex]}
\newtheorem{Theorem}{Theorem}[section]
\newtheorem{definition}[Theorem]{Definition}
\newtheorem{lemma}[Theorem]{Lemma}
\newtheorem{proposition}[Theorem]{Proposition}
\newtheorem{Open Problem}[Theorem]{Open Problem}
\makeatletter \@addtoreset{equation}{section} \makeatother
\newcommand{\abs}[1]{\lvert#1\rvert}
\newcommand{\st}{\,\vert\,}
\newcommand{\dif}{\,\mathrm{d}}
\newcommand{\Rset}{\mathbb{R}}
\newcommand{\Nset}{\mathbb{N}}
\newcommand{\wto}{\rightharpoonup}
\newcommand{\cE}{\mathcal{E}}
\newcommand{\cP}{\mathcal{P}}
\newcommand{\cH}{\mathcal{H}}
\newcommand{\cM}{\mathcal{M}}
\newcommand{\cN}{\mathcal{N}}
\newcommand{\cS}{\mathcal{S}}
\newcommand{\cF}{\mathcal{F}}
\DeclareMathOperator{\supp}{supp}
\DeclareMathOperator{\dist}{dist}
\begin{document}

\title{\bf Saddle solutions for the Choquard equation with a general nonlinearity
}

\author[]{Jiankang Xia\thanks{E-mail address: jiankangxia@nwpu.edu.cn.}}
\affil[]{{\footnotesize School of Mathematics and Statistics, Northwestern Polytechnical University,
           Xi'an 710129, China}}
\date{}
\maketitle

\begin{minipage}{13cm}
{\small {\bf Abstract:} In the spirit of Berestycki and Lions, we prove the existence of saddle type nodal solutions for the Choquard equation  
\[
 -\Delta u + u= \big(I_\alpha \ast F(u)\big)F'(u)\qquad \text{ in }\;\mathbb{R}^N
\]
where $N\geq 2$ and $I_\alpha$ is the Riesz potential of order $\alpha\in (0,N)$. Without any compact setting, we construct saddle solutions in a unified way for the  Choquard equation whose nodal domains are of cone shapes demonstrating Coxeter's symmetric configurations in $\Rset^N$.
Moreover, if $F'$ is odd and has constant sign on $(0,+\infty)$, then the saddle solution maintains signed on the fundamental domain of the corresponding Coxeter group and receives opposite signs on any two adjacent domains.
These results further complete the variational framework in constructing sign-changing solutions for the Choquard equation but still require a quadratic or super-quadratic growth on $F$ near the origin.

\medskip {\bf Key words:} Choquard equation; saddle solutions; Coxeter group; nodal domains.

\medskip {\bf 2020 Mathematics Subject Classification:} 35A01 $\cdot$ 35B08 $\cdot$ 35J20 $\cdot$ 35J60}
\end{minipage}

\section{Introduction and main results}
\label{section1}

We consider nodal solutions for the  Choquard type nonlinear equation
\begin{equation}
\tag{$\mathcal{C}$}
\label{eqChoquard}
- \Delta u+ u= \big(I_\alpha \ast F(u) \big)f(u),  \qquad u\in H^1(\mathbb{R}^N),
\end{equation}
which has been widely studied in recent years for its diverse applications in physics.
Here the dimension  $N\geq 2$ and $F\in C^1(\Rset,\Rset)$ is the primitive function of $f$.  The Riesz potential $I_\alpha$ with order $\alpha\in (0,N)$ is defined for
$x \in \mathbb{R}^N \setminus \{0\}$ by
\[
  I_{\alpha}(x)=\frac{A_\alpha}{\abs{x}^{N-\alpha}} \quad \text{ with } \quad
  A_\alpha=\frac{\Gamma(\frac{N-\alpha}{2})}{2^\alpha\pi
  ^{\frac{N}{2}}\Gamma(\frac{\alpha}{2})}
\]
where $\Gamma$ denotes the classical Gamma function and $\ast$ stands for the convolution on \(\Rset^N\).

The Choquard equation \eqref{eqChoquard} is a canonically nonlocal equation due to the appearance of the convolution. When $N=3$, $\alpha=2$ and $F(u)=\abs{u}^2$, as an important model, it arises in various physics contexts, such as the theory of one-component plasma  in describing the electron trapped in its own hole \cite{L} and the model of a polaron at rest in the quantum physics  \cite{Pekar}. This equation also has applications in Einstein--Klein--Gordon system and Einstein--Dirac system
\cite{GG2012}. It is usually referred to as the Schr\"odinger--Newton equation in the description of self-gravitating matter \cite{MPT}. We refer to \cite{Diosi1984,J2,MVSReview} and the references therein for more physical interpretations.

The $p$-homogeneous Choquard equation, e.g., $F(u)=\abs{u}^p$, has already been extensively investigated by a plenty of literatures during last decades. We refer to \cite{L,Lions1980,Lions1984} for the existence and multiplicity results for the Choquard equation \eqref{eqChoquard} in $\mathbb{R}^3$ with $F(u)=\abs{u}^2$  and $\alpha=2$. Moroz and Van Schaftingen \cite{MVJFA} obtained the ground state solution for the $p$-homogeneous Choquard equation. Here and in the sequel, the ground state solution is understood in the sense that it minimizes the energy functional among all nontrivial solutions.
It is shown that nontrivial solutions with finite energies to the $p$-homogeneous Choquard equation can exist if and only if the exponent $p$ satisfies  $\frac{N-2}{N+\alpha}<\frac{1}{p}<\frac{N}{N+\alpha}$, see e.g., \cite{MVJFA,MVTAMS,VanSchaftingenXia}. Several papers have also studied the existence of sign-changing
solutions to the $p$-homogeneous Choquard equation in recent years. In a compact setting, we mention \cite{VanSchaftingenXia} for the least energy sign-changing solution and  refer to \cite{GuiGuo,ZhihuaHangmultiple2017} for the multiple radially symmetric solutions with prescribed number of nodal domains. We refer readers to the survey paper \cite{MVSReview} and the references therein for more recent results in this field.

For the Choquard equation \eqref{eqChoquard}, Moroz and Van Schaftingen \cite{MVTAMS} investigated the ground state solution under the so-called Berestycki--Lions conditions as $N\geq 3$,
\begin{itemize}
  \item [($F_0$)]  $F\in C^1(\Rset,\Rset)$ and there exists $s_0\in \mathbb R $ such that $F(s_0)\neq 0$.
  \item [($f_1$)]  There exists $C>0$ such that for every $s\in \Rset$,  $\abs{sF'(s)}\leq C(\abs{s}^{\frac{N+\alpha}{N}}+\abs{s}^{\frac{N+\alpha}{N-2}}).$
  \item [($f_2$)]   $\lim\limits_{s\to 0}\frac{F(s)}{\abs{s}^{\frac{N+\alpha}{N}}}=0$  and $\lim\limits_{s\to\infty}\frac{F(s)}{\abs{s}^{\frac{N+\alpha}{N-2}}}=0$.
\end{itemize}
Besides the nontrivial assumption $(F_0)$, Battaglia and Van Schaftingen \cite{BV2017} obtained the ground state solution for the planar case by assuming the following hypotheses.
\begin{itemize}
  \item [($f'_1$)]  For every $\theta>0$, there exists $C_\theta>0$ such that for every $s\in\Rset$,
       $$\abs{F'(s)}\leq C_\theta\min\{1,\abs{s}^{\frac{\alpha}{2}}\}e^{\theta \abs{s}^2}.$$
  \item [($f'_2$)]   $\lim\limits_{s\to 0}\frac{F(s)}{\abs{s}^{\frac{2+\alpha}{2}}}=0$.
\end{itemize}
We emphasis that the above assumptions on the nonlinearity are quit mild and reasonable. It turns out that these assumptions are actually "almost necessary" for the existence of nontrivial finite energy solutions \cite{BV2017,MVTAMS}.

On the other hand, some progresses have been made on the saddle type nodal solutions. It was Ghimenti and Van Schaftingen who constructed a surprisingly odd solution for the $p$-homogeneous Choquard equation in their seminal work \cite{GhimentiVanSchaftingen2016}, this kind of odd solution has exactly two half-space nodal domains and makes the issue of constructing sign-changing solutions become more interesting because the compact assumption there is no longer needed.
Actually they took advantage of the reflection group with order $2$ to grantee the nodal property.
Motivated by the highlight results in \cite{GhimentiVanSchaftingen2016}, Wang and the author of current paper recently developed an effective variational approach in constructing saddle type nodal solutions for the $p$-homogeneous Choquard equation in \cite{XiaWang2019,XiaWang2018} by making use of the dihedral group and the full polyhedral groups of the Platonic solids, respectively.  As a result, finding saddle solutions is reduced to solving a minimization problem constrained on the saddle type Nehari manifold.
In a very recent joint work with Zhang \cite{XiaZhang2020}, we developed a unified variational framework for the construction of saddle solutions to the upper critical Choquard equation.
It turns out that the Coxeter symmetry was introduced and these groups contain more reflections than before. All of the symmetric groups used in \cite{GhimentiVanSchaftingen2016,XiaWang2019,XiaWang2018,XiaZhang2020} consist of distinct number of reflections giving different tiling and configurations in $\Rset^N$.
Note that finite Coxeter groups can be generated by finite reflections in $\Rset^N$. Therefore, nodal sets of these saddle solutions are all made up by hyperplanes \cite{GhimentiVanSchaftingen2016,XiaWang2019,XiaWang2018,XiaZhang2020}, each of saddle solutions keeps a constant sign on every conic (sectorial) region and receives opposite signs on any two adjacent regions.
This reveals a distinguished nonlocal feature of the Choquard equation. As a contrast, it is impossible to admit such type of nodal solutions either for its local counterpart of the nonlinear Schr\"odinger equation with the form $-\Delta u +u = |u|^{q-2} u$  \cite{Esteban1982} or for the Yamabe problem $-\Delta u=\abs{u}^{\frac{4}{N-2}} u$ on $\Rset^N$ \cite{Struwe1996}.

Whereas all of the saddle solutions constructed in \cite{GhimentiVanSchaftingen2016,XiaWang2019,XiaWang2018,XiaZhang2020} up until now were only available for the homogeneous Choquard equation. It seems that little is known on the existence of nodal solutions to the Choquard equation \eqref{eqChoquard} except a very recent paper \cite{Zhang2020} in which the compact setting is assumed and a sequence of radial or non-radial sign-changing solutions are obtained.
The condition of Ambrosetti--Rabinowtiz type  is also required in \cite{Zhang2020}  on the nonlinearity.

In the present paper, we shall deal with the existence of saddle solutions for the Choquard equation \eqref{eqChoquard} with a general nonlinearity by introducing the Coxeter symmetry.
The approach for seeking saddle solutions in the previous work heavily depends upon the Nehari manifold method, in turn, the homogeneous or the monotonic property of the nonlinearity plays a significant role. As a consequence, the Nehari manifold method can not work well in tacking with the general Choquard equation \eqref{eqChoquard} if we do not have any additional monotonic assumptions on $F$. This has been observed in the process of finding ground state solutions in \cite{BV2017,MVTAMS}.

To state our results, we shall first recall the Coxeter group and some notations.
 Let $G$ be a finite Coxeter group with rank $k\leq N$ and $\abs{G}$ denote its cardinality.
It follows that $G=\langle \mathcal{S}\rangle$ with $\mathcal{S}$ being the generating set and having $k$ distinct reflections in $\cS$. More precisely, there exists a unique epimorphism $\psi: G\to\{\pm 1\}$ induced by $\psi(s)=-1$ for all $s\in\mathcal{S}$.  A function $u:\Rset^N\to \Rset$ is said to be $G$-symmetry if it satisfies
$$g\diamond u (x)=\psi(g)u(x), \quad \text{ for } g\in G,\quad x\in \Rset^N. $$
Here $\diamond$ denotes  the group action which will be explained in Section \ref{frawork}. With this kind of symmetric groups,  the symmetric critical principle is applicable and we shall consider our problem in a closed $G$-subspace defined by
$$\cH_G=\{u\in H^1(\Rset^N)\st g\diamond u=\psi (g)u, \forall g\in G\}.$$
It turns out that all of the symmetric groups used in \cite{GhimentiVanSchaftingen2016,XiaWang2019,XiaWang2018} are of Coxeter with their ranks less than $4$ yielding different symmetric nodal structures.  In what follows, we say $u\in \cH_G\setminus\{0\}$ is a $G$-saddle solution if $u$ solves the Choquard equation \eqref{eqChoquard} and $u$ is called $G$-groundstate if in addition $u$ minimizes the energy functional amongst all the $G$-saddle solutions.

The first result is about the existence of  $G$-groundstate. Besides the nontrivial assumption $(F_0)$, in the present paper, we assume a little stronger assumptions  than that in \cite{BV2017,MVTAMS} on the nonlinearity $F$, the reason will be explained later.
\begin{itemize}
  \item [($F_1$)]  $\lim\limits_{s\to 0}\frac{f(s)}{s}=\mu \in[0,+\infty)$.
  \item [($F_2$)]  $\lim\limits_{s\to\infty}\frac{F(s)}{\abs{s}^{\frac{N+\alpha}{N-2}}}=0$ for $N\geq 3$;  $\lim\limits_{s\to\infty}\frac{F(s)}{\abs{s}^{\bar{p}}}=0$ for some $\bar{p}\in[2,+\infty)$ as $N=2$.
\end{itemize}
 Our results on the  saddle solutions can be stated as follows.
\begin{Theorem}\label{thm1.1}
Let $N\geq 2$ and $G$ be a finite Coxeter group of rank $k$ with $1\leq k\leq N$. If $F$ is even and satisfies the assumptions $(F_0)$, $(F_1)$ and $(F_2)$, then the Choquard equation
 \eqref{eqChoquard} admits a $G$-saddle solution $u\in H^{1}(\Rset^N)\cap C^2(\Rset^N)$ which turns to be a $G$-groundstate.
\end{Theorem}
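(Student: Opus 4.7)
My plan is to run Jeanjean's monotonicity trick inside the $G$--symmetric space $\cH_G$, leveraging the Coxeter action to restore the compactness that is otherwise unavailable under the Berestycki--Lions conditions (where Nehari-type constraints fail for lack of monotonicity on $F(s)/s^q$). Write the energy as
\[
\mathcal{I}(u)=\tfrac12\|u\|^2_{H^1}-\mathcal{B}(u),\qquad \mathcal{B}(u)=\tfrac12\int_{\Rset^N}\bigl(I_\alpha\ast F(u)\bigr)F(u).
\]
Since $F$ is even and each $u\in\cH_G$ satisfies $g\diamond u=\psi(g)u$, the composition $F(u)$ is $G$--invariant in the ordinary sense, so $\mathcal{B}$ is well defined on $\cH_G$; under $(F_1),(F_2)$ and the Hardy--Littlewood--Sobolev inequality, $\mathcal{I}\in C^1(\cH_G)$. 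Since $G$ acts orthogonally on $\Rset^N$, Palais' principle of symmetric criticality identifies critical points of $\mathcal{I}|_{\cH_G}$ with weak solutions of \eqref{eqChoquard}.

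To install a mountain pass geometry on $\cH_G$, I use $(F_0)$ to build a $G$--symmetric test function $v\in\cH_G$ with $\mathcal{B}(v)>0$: take a smooth bump $\varphi$ supported in the interior of a fundamental chamber with $\varphi\equiv s_0$ on a large subset, and extend $\varphi$ $G$--antisymmetrically by the reflections; positivity of the Riesz kernel together with $G$--invariance of $F(v)$ forces $\mathcal{B}(v)>0$. Berestycki--Lions scaling $v_t(x)=v(x/t)$ drives
\[
\mathcal{I}_\lambda(u)=\tfrac12\|u\|^2_{H^1}-\lambda\mathcal{B}(u),\qquad \lambda\in[\tfrac12,1],
\]
to $-\infty$ as $t\to\infty$, while $(F_1),(F_2)$ provide a well at the origin uniform in $\lambda$. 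Thus each $\mathcal{I}_\lambda$ has a mountain pass level $c_\lambda$, and $\lambda\mapsto c_\lambda$ is non-increasing, hence differentiable almost everywhere. Jeanjean's argument then produces, for every differentiability point $\lambda$, a bounded Palais--Smale sequence $\{u_n^\lambda\}\subset\cH_G$ at level $c_\lambda$ together with an asymptotic Pohozaev identity.

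The technical heart is the symmetric compactness analysis, and I expect this to be the main obstacle. Vanishing of $\{u_n^\lambda\}$ in Lions' sense is excluded by the nonlocal Lions-type lemma for the Riesz convolution as in \cite{MVTAMS}, because it would force $\mathcal{B}(u_n^\lambda)\to 0$ and contradict $c_\lambda>0$. Escape along $|y_n|\to\infty$ is obstructed by the Coxeter structure: when the rank $k$ equals $N$, the only $G$--fixed point is the origin, so concentration at $y_n$ is replicated at every $gy_n$, and the decay of $I_\alpha$ decouples the cross terms to give $c_\lambda\ge|G|\,m_\lambda^{\mathrm{loc}}$, where $m_\lambda^{\mathrm{loc}}$ is the unrestricted ground state level of $\mathcal{I}_\lambda$; refining the mountain pass by $G$--symmetrizing a Moroz--Van Schaftingen ground state placed inside a fundamental chamber, and exploiting the strict positivity of the interaction energy produced by the Riesz kernel at finite separation, yields the strict reverse inequality $c_\lambda<|G|\,m_\lambda^{\mathrm{loc}}$ which rules out the escape. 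When $k<N$, the fixed subspace has positive dimension $N-k$; to prevent drift in it I impose additional $O(N-k)$--radial symmetry on $\cH_G$ and recover compactness in the fixed directions via Strauss' radial decay. In either case the weak limit $u_\lambda\in\cH_G$ is nonzero and satisfies $\mathcal{I}_\lambda'(u_\lambda)=0$.

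To finish, along a sequence $\lambda_n\to 1^-$ of differentiability points, the identity $\mathcal{I}_{\lambda_n}(u_{\lambda_n})=c_{\lambda_n}$ combined with the Pohozaev identity $P_{\lambda_n}(u_{\lambda_n})=0$ bounds $\{u_{\lambda_n}\}$ in $H^1$; a second application of the symmetric compactness step delivers a nonzero weak limit $u\in\cH_G$ with $\mathcal{I}'(u)=0$, and the Moroz--Van Schaftingen regularity bootstrap upgrades $u$ to $C^2(\Rset^N)$. To see $u$ is a $G$--groundstate, I apply the Berestycki--Lions scaling to any $G$--saddle solution $w$: the path $t\mapsto w(\cdot/t)$ lies in the mountain pass class on $\cH_G$ and attains its unique maximum at the value $\mathcal{I}(w)$, forcing $c_1\le\mathcal{I}(w)$; together with $\mathcal{I}(u)=c_1$, this gives $\mathcal{I}(u)=\inf\{\mathcal{I}(w):0\ne w\in\cH_G,\ \mathcal{I}'(w)=0\}$, so $u$ realizes the $G$--groundstate.
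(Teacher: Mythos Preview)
Your compactness threshold is wrong for rank $k\ge 2$, and this breaks the argument. You assert that escape of the concentration center $y_n$ forces $c_\lambda\ge |G|\,m_\lambda^{\mathrm{loc}}$, to be ruled out by the strict inequality $c_\lambda<|G|\,m_\lambda^{\mathrm{loc}}$. That is only correct when $G={\bf A}_1$. In general the sequence $y_n$ may drift to infinity while staying asymptotically on a facet $\partial^{i}\mathcal{F}$ of the fundamental chamber; the orbit $Gy_n$ then has only $|G|/|S_q|$ points (with $S_q$ the isotropy subgroup, a Coxeter group of rank $i$), and each bubble inherits $S_q$-symmetry, so its energy is bounded below by $c_{S_q}$, \emph{not} by the unconstrained ground-state level. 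The actual threshold is
\[
c_G^*=\min\bigl\{\,|Gq|\,c_{S_q}\,:\,q\in\partial^{k-1}\mathcal{F}\,\bigr\},
\]
which is strictly smaller than $|G|\,c_0$; for the dihedral group ${\bf I}_2(m)$, for instance, one must show $c_G<m\,c_{{\bf A}_1}$, not merely $c_G<2m\,c_0$. Proving $c_G<c_G^*$ requires the \emph{prior} existence and exponential decay of the $S_q$-saddle solutions, and this is exactly why the paper proceeds by induction on the rank: one first builds the ${\bf A}_1$-groundstate, uses it (and its decay) as a test configuration to bound $c_{{\bf I}_2(m)}$, then uses the dihedral saddle solutions to bound the rank-$3$ levels, and so on. Your proposal contains no trace of this hierarchy, and the single inequality $c_\lambda<|G|\,m_\lambda^{\mathrm{loc}}$ does not close the dichotomy case.

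A second, smaller gap: for $k<N$ you impose $O(N-k)$-radial symmetry on $\cH_G$ to kill drift along the fixed subspace. This gives a critical point, but the mountain-pass level in that smaller space may strictly exceed $c_G$, so you cannot conclude the solution is a $G$-groundstate in the sense of the theorem. The paper instead allows translations along $\{0\}\times\Rset^{N-k}$ and shows that, modulo such translations, the Pohozaev--Palais--Smale sequence has a nonzero weak limit in $\cH_G$ itself; combined with the identity $c_G=p_G=m_G$, this yields the genuine $G$-groundstate.
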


We remark that in the $p$-homogeneous case that $F(u)=\abs{u}^p$, Theorem \ref{thm1.1} holds provided that $N\geq 2$ and $p\in[2,\frac{N+\alpha}{N-2})$, which are exactly the same with that in \cite{GhimentiVanSchaftingen2016,XiaWang2019,XiaWang2018}. In fact, the odd solution constructed in \cite{GhimentiVanSchaftingen2016} makes use of the 1-rank Coxeter groups
${\bf{A}_1}$, generated by one reflection. The 2-rank Coxeter group, dihedral group ${\bf{I}}_2(m)$, was used in  \cite[Theorem 1.1]{XiaWang2018} to construct the dihedral type saddle solution that has exactly $2m$ nodal domains. The irreducible Coxeter groups with rank 3  including ${\bf{A}}_3$, ${\bf{B}}_3$, ${\bf{H}}_3$ in \cite{XiaWang2019} yield the saddle solutions with polyhedral symmetries, where ${\bf{A}}_3$, ${\bf{B}}_3$, ${\bf{H}}_3$ are exactly the three kinds of full polyhedral groups for the Plato's polyhedrons in $\mathbb R^3$.
 Different saddle solutions for $p$-homogeneous Choquard equation were also obtained separately by taking advantage of the symmetric groups ${\bf{I}}_2(m)\times {\rm\bf{A}}_1$, ${\bf{I}}_2(m_1)\times {\bf{I}}_2(m_2)$ and ${\rm\bf{A}}_1\times\cdots\times {\rm\bf{A}}_1$  in the previous work \cite{XiaWang2019,XiaWang2018}. In summary, all the symmetric groups used in \cite{GhimentiVanSchaftingen2016,XiaWang2019,XiaWang2018} are types of Coxeter so that each of the saddle solution demonstrates a unique Coxeter's symmetric configurations in $\Rset^N$. The notations and the classification of Coxeter groups will be explained in the final appendix.
 In this sense, Theorem \ref{thm1.1} will further complete the variational framework initiated in \cite{GhimentiVanSchaftingen2016} and further developed in \cite{XiaWang2019,XiaWang2018,XiaZhang2020}. Here we also would like to mention the existence results on sign-changing solutions with higher energies under higher dimensional symmetries \cite{Cingolani2012ZAMP,Clapp2013}. Nevertheless, the key assumption $(Z_0)$ imposed on the group seems necessary and the group action they used can not be realized in $\Rset^3$. Our results manifest that the compactness can still fail below the threshold value in \cite{Cingolani2012ZAMP,Clapp2013} which equals the ground state energy multiplying by the order of the group $G$, see Proposition \ref{energystrictinequality}.

As we shall see  that for the existence of saddle solutions, the higher rank of the Coxeter group we use, the more existing saddle solutions we should consider when restoring the compactness. In fact, as is clarified in \cite{XiaWang2019,XiaWang2018}, given a finite Coxeter group $G$, there exists a threshold value $c_G^*$ below which the compactness can be restored, and the threshold $c_G^*$ actually depends upon the energy levels of the preceding saddle solutions that correspond to subgroups of $G$.  As a result, the existing saddle solutions come into play like a hierarchy when we build one type of saddle solutions with higher dimensional symmetries.

In the process of constructing saddle solutions, our ultimate task is to restore the compactness in $\Rset^N$ under the group action. To this end, a suitable local compactness condition is needed which leads to a prior estimate on the mountain pass type energy level and in turn it attributes to the decay behavior of the sign-changing solutions. For the $p$-homogeneous Choquard equation, as indicated in \cite{GhimentiVanSchaftingen2016}, finding the odd solution relies on the decay behavior of the ground state solution. For the sign-changing solutions of $p$-homogeneous Choquard equation, the decay behavior is only clear when $p\geq 2$, see \cite[Proposition 2.1]{XiaWang2018}.
For the Choquard equation \eqref{eqChoquard}, up until now, only the exponential decay of the ground state solution was known \cite[Proposition 2.2]{YangZhangZhang2017}, which requires the nonlinearity $f$ having superlinear and subcritical growth.  However, there has no decay results for sign-changing solutions of \eqref{eqChoquard}.

In this paper, under our assumptions on the nonlinearity, we shall prove that any weak solutions of \eqref{eqChoquard} has an exponential decay, which plays a big part in determining the threshold for the compactness, see Proposition \ref{energystrictinequality} below.  We remark that this decay estimate particularly holds for sign-changing solutions. This partly complements the results of \cite[Proposition 2.2]{YangZhangZhang2017} where the decay behavior is only considered for positive solutions.

\begin{Theorem}\label{thm1.2}
Let $N\geq 2$, $\alpha\in(0,N)$. If $F\in C^1(\Rset,\Rset)$ satisfies $(F_1)$--$(F_2)$ and $u\in H^1(\Rset^N)$ is a weak solution of \eqref{eqChoquard}. Then, $u\in W^{2,s}_{\rm{loc}}(\Rset^N)$ for any $s\geq 1$ and $u\in C^2(\Rset^N)\cap L^{\infty}(\Rset^N)$.  Moreover, there exist positive constants $C,c>0$ such that
$
 \abs{u(x)}\leq Ce^{-c\abs{x}}.
$
\end{Theorem}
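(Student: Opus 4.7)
The plan is to first establish $L^\infty$ boundedness and $C^2$ regularity by a nonlocal bootstrap argument, then derive exponential decay by a comparison principle exploiting the asymptotic vanishing of $I_\alpha \ast F(u)$ at infinity.

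For the regularity part, I would start from the identity $u = (-\Delta + 1)^{-1}\bigl[(I_\alpha \ast F(u))\, f(u)\bigr]$. The assumptions $(F_1)$ and $(F_2)$ combine to yield pointwise bounds of the form
\[
|F(s)| \leq C(s^2 + |s|^{q}), \qquad |f(s)| \leq C(|s| + |s|^{q-1}),
\]
with $q = \tfrac{N+\alpha}{N-2}$ when $N \geq 3$ and $q = \bar p$ when $N = 2$. Since $u \in H^1 \subset L^{2^*}$, this places $F(u)$ in a range of Lebesgue spaces around $L^{2N/(N+\alpha)}$, and Hardy--Littlewood--Sobolev places $I_\alpha \ast F(u)$ in the dual range. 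A Moser/Brezis--Kato iteration in the spirit of \cite{MVTAMS} then progressively improves the integrability of $u$ and delivers $u \in L^p(\Rset^N)$ for every $p \in [2,\infty]$. Once $u$ is bounded, the right-hand side of the equation lies in $L^s_{\rm loc}$ for every finite $s$, so Calder\'on--Zygmund gives $u \in W^{2,s}_{\rm loc}(\Rset^N)$ for all $s \geq 1$, and a standard Schauder bootstrap upgrades this to $u \in C^2(\Rset^N)$.

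For the decay the new ingredient is that $F(u) \in L^1(\Rset^N)$, which follows precisely from the quadratic vanishing $F(s) = O(s^2)$ at zero encoded in $(F_1)$ together with $u \in L^2 \cap L^\infty$. Combined with $F(u) \in L^\infty$, splitting the convolution into an inner ball $B_R$, where $|I_\alpha(x-y)| \leq C|x|^{-(N-\alpha)}$ for $|x|>2R$, and an exterior piece whose $L^\infty$ norm can be made small uniformly in $x$ by taking $R$ large (via Young's inequality after a short/long-range splitting of $I_\alpha$), shows that $(I_\alpha \ast F(u))(x) \to 0$ as $|x|\to\infty$. Meanwhile $u \in H^1 \cap C^1$ with bounded gradient forces $u(x) \to 0$ at infinity. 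Invoking $(F_1)$ once more I write $f(u) = g(u)\, u$ with $g$ bounded on $\mathrm{range}(u)$, recasting the equation as
\[
-\Delta u + (1 - V(x))\, u = 0, \qquad V(x) := (I_\alpha \ast F(u))(x)\, g(u(x)),
\]
where $V(x) \to 0$ as $|x|\to\infty$.

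Fixing $R_0$ so that $|V(x)| \leq 1/2$ for $|x| \geq R_0$, Kato's inequality yields $-\Delta|u| + \tfrac{1}{2}|u| \leq 0$ outside $B_{R_0}$. A direct computation shows that for any $c \in (0,1/\sqrt{2})$ the function $w(x) = A e^{-c|x|}$ is a classical supersolution of $-\Delta w + \tfrac{1}{2}w = 0$ outside a sufficiently large ball. The weak maximum principle on this exterior domain, with boundary data absorbed into $A$ and both $|u|$ and $w$ vanishing at infinity, then produces $|u(x)| \leq C e^{-c|x|}$. The main obstacle is the nonlocal term: because the Riesz kernel has long range, pointwise decay of $I_\alpha \ast F(u)$ does not follow from $F(u) \to 0$ alone, but depends on the $L^1$ integrability of $F(u)$ granted by the quadratic behaviour in $(F_1)$. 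This is exactly why the present paper must strengthen the Berestycki--Lions condition $(f_2)$ to $(F_1)$.
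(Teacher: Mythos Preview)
Your proposal is correct and follows essentially the same route as the paper: a nonlocal Brezis--Kato/Moser iteration (citing \cite{MVTAMS,BV2017}) for the regularity $u\in W^{2,s}_{\rm loc}\cap C^2\cap L^\infty$, then a splitting argument exploiting $F(u)\in L^1$ to obtain $(I_\alpha\ast F(u))(x)\to 0$, and finally Kato's inequality together with a comparison against an exponential supersolution. The only cosmetic differences are the ordering of steps (the paper first quotes $W^{2,s}_{\rm loc}$ from \cite{MVTAMS,BV2017}, then bounds $I_\alpha\ast F(u)$, and deduces $u\in L^\infty$ via Schauder estimates afterwards) and the convolution splitting (the paper splits at the moving ball $B_{|x|/2}$ rather than using a fixed $B_R$ combined with a near/far decomposition of $I_\alpha$); both variants yield the same conclusion.
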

The regularity results have been obtained in \cite{MVTAMS} and the decay estimate depends on a $L^\infty$ estimates on the convolution term and the comparison principle. In particular, as is revealed  in \cite{XiaWang2018}, the additional limit behavior of the convolution term enable us to deal with the quadratic case, which also extends the decay result in \cite{YangZhangZhang2017} for the positive solutions.  However, the decay behavior is still open for the sign-changing solutions to the sub-quadratic Choquard equation \eqref{eqChoquard} and only the groundstate solution to the $p$-homogeneous Choquard equation is proved to have a  polynomial decay \cite{GhimentiVanSchaftingen2016}. This is the reason why we limit the nonlinearity $f$ admitting linear or superlinear growth near the origin in all previous work \cite{XiaWang2019,XiaWang2018}. This also in contrast with the critical Choquard equation \cite{XiaZhang2020}, because it has been shown that all the solutions have the polynomial decay behavior \cite{GHPS2019}.
We also bring attention that the assumptions $(F_2)$ is somewhat stronger than the hypothesis $(f'_1)$ as $N=2$, where only a weaker Moser--Trudinger type growth assumption is assumed. This is because the asymptotic decay behavior of the convolution term $I_\alpha\ast F(u)$ is still unclear for sign-changing solutions.

In a similar manner as in that \cite[Proposition 2.6]{GhimentiVanSchaftingen2016}, we can also prove the constant signed property of the $G$-groundstate on the strict fundamental domain of the Coxeter group $G$ under some stronger assumptions.

\begin{Theorem}\label{thm1.3}
Let $N\geq 2$ and $G$ be a finite Coxeter group of rank $k$ with $1\leq k\leq N$. If $F\in C^1(\Rset,\Rset)$ satisfies $(F_1)$ and $(F_2)$, in addition, if $F'$ is odd and has a constant sign on $(0,+\infty)$, then any $G$-groundstate of the Choquard equation
 \eqref{eqChoquard} maintains a constant sign on the strict fundamental domain and has exactly $\abs{G}$ nodal domains.
 \end{Theorem}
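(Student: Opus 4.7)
The plan is to follow the strategy used for the odd groundstate in Proposition 2.6 of Ghimenti--Van Schaftingen, now lifted to the Coxeter setting. Let $\mathcal{D}$ denote the open Weyl chamber, so $\overline{\mathcal{D}}$ is a strict fundamental domain for $G$ and $\mathcal{D}$ is an open convex cone, hence connected. Given an arbitrary $G$-groundstate $u$, the first move is to produce a suitably symmetrized minimizer: set $\tilde u=\abs{u}$ on $\overline{\mathcal{D}}$ and extend by $\tilde u(gx)=\psi(g)\,\tilde u(x)$ for $g\in G$, $x\in\overline{\mathcal{D}}$. Because every simple reflection $s\in\cS$ fixes a wall of $\mathcal{D}$ and forces $u(sx)=-u(x)$, the function $u$, and hence $\abs{u}$, vanishes on all walls of $\mathcal{D}$; combined with the $C^2$-regularity from Theorem \ref{thm1.2} this ensures $\tilde u$ is continuous across walls and $\tilde u\in\cH_G$ with $\norm{\tilde u}_{H^1}=\norm{u}_{H^1}$. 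Since $F'$ odd (normalizing $F(0)=0$) forces $F$ to be even, one has $F(\tilde u)=F(\abs{u})=F(u)$ pointwise, whence $I_\alpha\ast F(\tilde u)=I_\alpha\ast F(u)$ and $\tilde u$ realizes the same energy as $u$. Consequently $\tilde u$ is itself a $G$-groundstate and therefore a weak solution of \eqref{eqChoquard}.

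With $\tilde u$ in hand, the next step is to apply the strong maximum principle on $\mathcal{D}$. Assumption that $F'$ has constant sign on $(0,+\infty)$ together with $F$ even gives $F\geq 0$ on $\Rset$ and, say, $f\geq 0$ on $[0,+\infty)$, so both factors on the right-hand side are nonnegative on $\mathcal{D}$. Rewrite the equation on $\mathcal{D}$ as
\[
 -\Delta\tilde u+\Bigl(1-(I_\alpha\ast F(u))(x)\,\tfrac{f(\tilde u)}{\tilde u}\Bigr)\tilde u=0,
\]
where $f(\tilde u)/\tilde u$ is extended continuously at the zero locus of $\tilde u$ by the limit $\mu$ from $(F_1)$. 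The coefficient is locally bounded because $I_\alpha\ast F(u)\in L^\infty(\Rset^N)$ (from the exponential decay in Theorem \ref{thm1.2} combined with standard Riesz potential bounds) and $f(s)/s$ is bounded on any bounded range of $\tilde u$. The classical strong maximum principle then yields the dichotomy $\tilde u\equiv 0$ on $\mathcal{D}$ or $\tilde u>0$ throughout $\mathcal{D}$; the first option would propagate by equivariance to $u\equiv 0$, contradicting groundstate nontriviality. Hence $\abs{u}>0$ on the whole of $\mathcal{D}$.

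The conclusion is then immediate. Since $u$ is continuous and never vanishes on the connected open set $\mathcal{D}$, it keeps a constant sign there. By equivariance $u(gx)=\psi(g)u(x)$, $u$ keeps the sign $\psi(g)\cdot\mathrm{sgn}(u|_{\mathcal{D}})$ on each chamber $g\mathcal{D}$, and two adjacent chambers are separated by a single simple reflection $s$ with $\psi(s)=-1$, so they carry opposite signs along the wall where $u=0$. The $\abs{G}$ chambers $\{g\mathcal{D}\}_{g\in G}$ are pairwise disjoint and cover $\Rset^N$ up to the zero set of $u$, so each chamber is exactly one connected component of $\{u\neq 0\}$. This gives constant sign on the strict fundamental domain together with exactly $\abs{G}$ nodal domains.

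The main obstacle I anticipate is twofold. First, one must justify that $\tilde u$ really belongs to $\cH_G$ and still solves \eqref{eqChoquard}: this relies crucially on the walls being contained in $\{u=0\}$ (so the equivariant extension has no jump), on the evenness of $F$ (so that the nonlocal term is preserved under the symmetrization), and on the variational characterization of groundstates from Theorem \ref{thm1.1} (so that equality of energy forces $\tilde u$ to be a critical point). Second, the strong maximum principle step hinges on controlling the coefficient $f(\tilde u)/\tilde u$ uniformly near the zero locus of $\tilde u$; here assumption $(F_1)$ is essential, since the linear behavior of $f$ at the origin is what allows a continuous extension of $f(s)/s$ at $s=0$ and prevents the coefficient from blowing up along $\partial\mathcal{D}\cup\{\tilde u=0\}$.
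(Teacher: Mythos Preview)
Your proposal is correct and follows essentially the same route as the paper: symmetrize the groundstate by taking $|u|$ on the fundamental domain and extending equivariantly (the paper packages this via the map $\mathcal{U}:H^1_0(\mathcal{F})\to\mathcal{H}_G$), observe that evenness of $F$ preserves both the energy and the Pohozaev constraint so the symmetrized function is again a minimizer on $\mathcal{P}_G$ and hence a solution, and then apply the strong maximum principle on the chamber. The only cosmetic difference is that the paper uses the supersolution form $-\Delta\mathcal{U}(w)+\mathcal{U}(w)\geq 0$ directly (exploiting $F\geq 0$, $f\geq 0$ on the chamber) rather than your rewriting with the bounded coefficient $1-(I_\alpha\ast F)\,f(\tilde u)/\tilde u$, but both invoke the same classical result.
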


When compared with the local counterpart of the nonlinear Schr\"odinger equations \cite{Weth2006}, one significant difference is that the energy of the ${\bf{A}_1}$-groundstate is strictly less than twice the groundstate energy, i.e., the energy doubling property does not hold any more for the nonlocal Choquard equation. This fact makes it possible to find the global minimal nodal solution which minimizes the functional amongst all the sign-changing solutions. This have been done in \cite{GhimentiVanSchaftingen2016} for the $p$-homogeneous
Choquard equation. It turns out that the minimal nodal solution exist once the Choquard equation \eqref{eqChoquard} has the Nehari structure and the ${\bf{A}_1}$-groundstate.
 To make it clear, we assume that
 \begin{itemize}
  \item[($F'_1$)] $\lim\limits_{s\to 0}\frac{f(s)}{s^{r-1}}=\mu\in[0,+\infty)$ for some $r>2$ and the function $\frac{f(s)}{\abs{s}}$ is increasing in $s\in \Rset\setminus\{0\}$.
  \item [($F'_2$)] there exists some $p\in(2,\frac{N+\alpha}{N-2})$  such that
   $\lim\limits_{\abs{s}\to+\infty}\frac{F(s)}{\abs{s}^p}=0$.
\end{itemize}

 \begin{Theorem}\label{thm1.4}
Let $N\geq 2$ and $\alpha\in (0,N)$. If $F\in C^1(\Rset,\Rset)$ is even and satisfies $(F'_1)$ and $(F'_2)$, then the Choquard equation \eqref{eqChoquard}
has a global minimal nodal solution.
\end{Theorem}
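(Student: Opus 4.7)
The strategy is to combine the Nehari manifold approach with the $\mathbf{A}_1$--groundstate provided by Theorem \ref{thm1.1} and the non-doubling estimate $c_{\mathrm{sc}}<2c_0$. The strict monotonicity of $s\mapsto f(s)/|s|$ in $(F'_1)$ together with the subcritical bound $(F'_2)$ make the Nehari manifold $\cN=\{u\in H^1(\Rset^N)\setminus\{0\}:\cE'(u)u=0\}$ a natural $C^1$-constraint on which, for every $u\neq 0$, the fibering map $t\mapsto\cE(tu)$ has a unique strict maximum at some $t_u>0$ with $t_u u\in\cN$. By classical arguments (Moroz--Van Schaftingen), the ground-state level $c_0=\inf_\cN \cE$ is attained by a positive radial $w$, which by Theorem \ref{thm1.2} is $C^2$ and exponentially decaying. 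Because $F$ is even the identity $F(u)=F(u^+)+F(u^-)$ holds pointwise, so a two-parameter fibering analysis produces, for every $u$ with $u^\pm\not\equiv 0$, a unique $(s,t)\in(0,+\infty)^2$ with $su^+-tu^-$ in the sign-changing Nehari set
\[
\cM=\{u\in H^1(\Rset^N):\,u^\pm\not\equiv 0,\ \cE'(u)u^+=\cE'(u)u^-=0\},
\]
at which $(\sigma,\tau)\mapsto\cE(\sigma u^+-\tau u^-)$ attains its global maximum. Every sign-changing solution of \eqref{eqChoquard} lies in $\cM$, so the theorem reduces to showing that $c_{\mathrm{sc}}:=\inf_\cM \cE$ is achieved.

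The key ingredient is the strict inequality $c_{\mathrm{sc}}<2c_0$. The hypotheses $(F'_1)$--$(F'_2)$ and evenness of $F$ easily imply $(F_0)$--$(F_2)$, hence Theorem \ref{thm1.1} applied to the rank-$1$ Coxeter group $G=\mathbf{A}_1$ produces a sign-changing $\mathbf{A}_1$-groundstate lying in $\cM$, so $c_{\mathrm{sc}}\leq c_{\mathbf{A}_1}$. To bound $c_{\mathbf{A}_1}$ I would test with the odd profile $u_R(x)=w(x-Re_1)-w(x+Re_1)\in\cH_{\mathbf{A}_1}$. Exponential decay of $w$ gives $\|u_R\|_{H^1}^2=2\|w\|_{H^1}^2+O(e^{-cR})$, while the monotonicity in $(F'_1)$ combined with evenness forces $F\geq 0$ with $F(s)>0$ for $s\neq 0$, yielding a strictly positive cross contribution
\[
2\int_{\Rset^N}\!\bigl(I_\alpha\ast F(tw(\cdot-Re_1))\bigr)F(tw(\cdot+Re_1))\dif x \;\sim\; \frac{2A_\alpha}{(2R)^{N-\alpha}}\Bigl(\int_{\Rset^N}\!F(tw)\Bigr)^{\!2}.
\]
A second-order expansion of the single-bump fibering map $t\mapsto 2\cE(tw)$ about its unique maximizer $t=1$, perturbed by this positive cross term of order $R^{-(N-\alpha)}$, then gives $\max_{t>0}\cE(tu_R)\leq 2c_0-cR^{-(N-\alpha)}+o(R^{-(N-\alpha)})$, whence $c_{\mathbf{A}_1}<2c_0$ for $R$ large.

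With $c_{\mathrm{sc}}<2c_0$ in hand, choose a Palais--Smale minimizing sequence $(u_n)\subset\cM$ with $\cE(u_n)\to c_{\mathrm{sc}}$ and $\cE'(u_n)\to 0$ via an Ekeland-type argument on $\cM$; boundedness in $H^1(\Rset^N)$ follows from $(F'_1)$--$(F'_2)$ and the Hardy--Littlewood--Sobolev inequality. A profile decomposition adapted to the Choquard nonlinearity produces nontrivial weak solutions $U^j$ and translations $y_n^j$ with $|y_n^i-y_n^j|\to\infty$ for $i\neq j$ such that $u_n(x)=\sum_{j=1}^{J}U^j(x-y_n^j)+r_n^J(x)$ with energy splitting $\cE(u_n)=\sum_{j=1}^{J}\cE(U^j)+o(1)$ and $\cE(U^j)\geq c_0$ for each $j$. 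The bound $c_{\mathrm{sc}}<2c_0$ then forces a single nontrivial profile $U^1$, and $u_n(\cdot+y_n^1)\to U^1$ strongly in $H^1(\Rset^N)$. The constraint $u_n\in\cM$ provides a uniform lower bound $\|u_n^\pm\|_{H^1}\geq\delta>0$, which passes to the limit and shows $U^1$ is sign-changing, producing the desired global minimal nodal solution. The main obstacle in this scheme is the strict non-doubling inequality $c_{\mathrm{sc}}<2c_0$: because $F$ is not homogeneous, one loses the rescaling identity used in \cite{GhimentiVanSchaftingen2016}, and the delicate point is to quantify the second-order expansion of the fibering map together with the precise asymptotic of the nonlocal cross term based on the exponential decay of $w$.
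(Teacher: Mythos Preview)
Your overall strategy is sound and would prove the theorem, but it follows a different route from the paper.

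The paper minimizes directly over the set of sign-changing \emph{solutions}
\[
\cM_{\mathrm{nod}}=\{u\in H^1(\Rset^N):\, u^\pm\neq 0,\ \cE'(u)=0\},
\]
not over the sign-changing Nehari set $\cM$ you introduce. This constraint is nonempty because the $\mathbf{A}_1$-groundstate from Theorem~\ref{thm1.1} lies in it, and every element already satisfies $\cE'(v_n)=0$ exactly; hence no Ekeland argument and no Lagrange-multiplier verification are needed, and the weak limit is automatically a solution. The paper's compactness step is also different: rather than a full profile decomposition, it shows that the nonlocal cross term $\sum_{p_1,p_2\in\{r,p\}}\int(I_\alpha*|v_n^+|^{p_1})|v_n^-|^{p_2}$ cannot vanish (otherwise $\cE(v_n)=\cE(v_n^+)+\cE(v_n^-)+o(1)$ and the Nehari characterization of $c_0$ would force $c_{\mathrm{nod}}\geq 2c_0$), and then invokes \cite[Lemma~3.6]{GhimentiVanSchaftingen2016} to locate a common concentration center for $v_n^+$ and $v_n^-$. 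The strict inequality $c_{\mathrm{nod}}\leq c_{\mathbf{A}_1}<2c_0$ is quoted from Theorem~\ref{thm1.1} and Proposition~\ref{energystrictinequality}, so your separate test-function derivation of $c_{\mathbf{A}_1}<2c_0$ is correct but redundant.

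What each approach buys: the paper's route is shorter and sidesteps both the deformation machinery on $\cM$ and the need to prove that a constrained minimizer is an unconstrained critical point. Your route is the classical sign-changing Nehari scheme and yields a slightly stronger statement (the minimum over the larger set $\cM$ is attained), at the price of importing a profile decomposition and the natural-constraint argument. Both rely on the same key inputs---the monotonicity in $(F'_1)$ to make the Nehari level $c_0$ meaningful, and the non-doubling inequality $c<2c_0$---and ultimately deliver the same minimizer since $\cM_{\mathrm{nod}}\subset\cM$.
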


We sketch our main idea and organize the paper as follows. By introducing the Coxeter group $G$ with rank $k\leq N$, we shall furnish a suitable framework space for searching the $G$-saddle solutions, see Section \ref{frawork}. We then prove Theorem \ref{thm1.2} in Section \ref{exponent} that all the solutions of the Choquard equation \eqref{eqChoquard} decay exponentially at infinity. In addition, the regularities for the solutions enable us to establish the Pohozaev identity for this equation.  As is mentioned above, the lack of monotonic property on the nonlinearity makes the Nehari manifold method can not work well any more. We shall consider the mountain pass level as in \cite{BV2017,MVTAMS} rather than the minimization on the corresponding Nehari manifold, this is possible since the nontrivial assumption $(F_0)$ yields the mountain pass geometry, see the details in Section \ref{frawork}.
To avoid the condition of Ambrosetti--Rabinowtiz type on the nonlinearity, we shall construct the Pohozaev--Palais--Smale sequence at the mountain pass level $c_G$ by using a scaling technique, see Proposition \ref{LpropPSsequence}. To prove the key energy estimate on this level, we shall construct test functions by using the preceding saddle solutions, which can be done by an induction argument, see Section \ref{secenerest}. More precisely, we first show that $c_{{\bf{A}}_1}<2c_0$ by using the exponential decay of the ground state solution for the Choquard equation \eqref{eqChoquard}. And then, we conclude the existence of a nontrivial saddle solution with ${\bf{A}_1}$-symmetry by employing the concentration compactness principle. As in \cite{MVTAMS}, a crucial minimax characterization on this mountain pass level $c_G$ will be given, which reveals that $c_{\bf{A}_1}$ actually equals the infimum of the energy functional constrained on the corresponding Pohozaev manifold with ${\bf{A}_1}$-symmetry, this also involves the scaling technique, see Proposition \ref{energycharacterization} below. Therefore,
$c_{\bf{A}_1}$ is indeed the least energy amongst  ${\bf{A}}_1$-saddle solutions. The existence of  ${\bf{A}}_1$-groundstate will be the start point to construct the saddle solution with the dihedral nodal structures. By repeating this procedure, we then conclude step by step the existence of $G$-saddle solutions with the rank $k\leq N$. Every saddle solutions  that we obtained turns out to be the $G$-groundstate, see Proposition \ref{energycharacterization}. Theorem \ref{thm1.1} will be inductively completed in Section \ref{section5} by employing the concentration compactness principle.  We also prove Theorem \ref{thm1.3} in Section \ref{section5} under the additional assumptions on the nonlinearity.
We finally complete the proof of Theorem \ref{thm1.4}. This will be done by considering the minimization problem on the set of nodal solutions,
$$c_{\rm{nod}}=\inf_{\cM_{\rm{nod}}}\cE$$
where the constraint $\cM_{\rm{nod}}=\{u\in H^1(\Rset^N)\st u^\pm\neq 0, \cE'(u)=0\}$ with $u^\pm=\max\{\pm u, 0\}$.
Up to translations, this problem can be achieved along any minimizing sequence. We shall see that this is a direct consequence of the fact that $c_{\rm{nod}}\leq c_{{\bf{A}}_1}<2c_0$.
Since the background knowledge on the theory of Coxeter group has been summarized in \cite{XiaZhang2020}, we give an appendix at the end of this paper for the convenience of readers.

\section{Variational framework and minimax characterizations}
\label{frawork}

The Choquard equation \eqref{eqChoquard} has a  variational structure and corresponds to an energy functional $\mathcal{E}$, defined for each function $u\in H^1(\Rset^N)$ by
$$\mathcal{E}(u)=\frac{1}{2}\int_{\Rset^N}\abs{\nabla u}^2+\abs{u}^2-\frac{1}{2}\int_{\Rset^N}(I_\alpha* F(u))F(u).$$
According to the arguments in \cite{BV2017,MVTAMS}, this functional is well defined and is of $C^1$ in $H^1(\Rset^N)$.
In the sprite of Berestycki--Lions assumptions, the authors in \cite{BV2017,MVTAMS} investigated the existence of ground state solutions for the Choquard equation \eqref{eqChoquard} by employing a scaling technique introduced by Jeanjean \cite{Jeanjean1999} to construct the so called Pohozaev--Palais--Smale sequence. Precisely, they proved that
the mountain pass level defined below actually yields the ground state solution, it takes the form of
\begin{equation*}\label{minimizationground}
c_{0}=\inf_{\gamma\in \Gamma_0 }\max_{t\in[0,1]}\mathcal{E}(\gamma(t))
\end{equation*}
where the paths set is defined by
$$
\Gamma_0=\{\gamma\in C([0,1];H^1(\Rset^N)) \st \gamma(0)= 0 \text{ and } \mathcal{E}(\gamma(1))<0\}.
$$

To obtain the symmetric nodal structures, we shall introduce the Coxeter symmetry, see the details in the Appendix.
For $k\geq 1$, we denote $\mathcal{G}_k$ and $\mathcal{W}_k$  be the collections of finite Coxeter groups and the irreducible Coxeter groups with rank $k$, respectively. Given $G\in \mathcal{G}_k$,
by Proposition \ref{proparabolic}, we see that $G=W_{k_1}\times\cdots\times W_{k_\ell}$ where $W_{k_i}\in\mathcal{W}_{k_i}$ for each $k_i\geq 1$ and $\sum_i k_i=k$.
Recalling the unique epimorphism $\psi:G\to\{\pm 1\}$ defined in Lemma \ref{lemmaepimorphism}, we now set up the framework space
$$
\mathcal{H}_{G}:=\{u\in H^1(\Rset^N)\st \texttt{g} \diamond u=\psi(g) u \text{ for all } \texttt{g}\in G\}.
$$
Here the group action on a function $u:\Rset^N\to \mathbb R$ is defined for each $g\in G$ by
$$
g\diamond u(x)=u(g^{-1}x)
\quad\text{
 with
}\quad
gx:=(g\oplus1_{N-k})x.
$$

The energy functional permits the mountain pass geometry, so that the mountain pass level is a natural candidate for seeking weak solutions. Namely, we consider
\begin{equation}\label{minimizationprob}
c_{G}=\inf_{\gamma\in \Gamma_G }\max_{t\in[0,1]}\mathcal{E}(\gamma(t))
\end{equation}
where  paths set belongs to $\cH_G$, i.e., $\Gamma_G=\{\gamma\in \Gamma_0 \st \gamma(t)\in \cH_G \text{ for all } t\in [0,1]\}$.
Comparing with the saddle type nodal solutions obtained in \cite{MVJFA,XiaWang2018,XiaWang2019}, we have neither monotonic assumption nor Ambrosetti--Rabinowitz type growth condition on the nonlinearity. To overcome this, we recall the Pohozaev identity, which is allowed by the regularities exhibited in Theorem \ref{thm1.2}. For the detailed proofs, we refer to \cite[Theorem 3]{MVTAMS} and \cite[Proposition 5.2]{BV2017}.
\begin{proposition}\label{proppohozaevidentity}
Let $N\geq 2$. If $F$ satisfies $(F_1)$--$(F_2)$ and $u\in H^1(\mathbb R^N)\cap W^{2,2}_{\rm{loc}}(\mathbb R^N)$ solves the equation \eqref{eqChoquard}, then
\begin{equation}\label{pohozaev}
\mathcal{P}(u)=\frac{N-2}{2}\int_{\Rset^N}\abs{\nabla u}^2+\frac{N}{2}\int_{\Rset^N}\abs{u}^2 -\frac{N+\alpha}{2}\int_{\Rset^N}(I_\alpha*F(u))F(u)=0.
\end{equation}
\end{proposition}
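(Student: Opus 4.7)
The plan is to derive the identity by the classical Pohozaev multiplier method: test \eqref{eqChoquard} against $\varphi_R(x)\,(x\cdot\nabla u)$, where $\varphi_R(x)=\varphi(|x|/R)$ is a radial cutoff equal to one on $B_1$ and vanishing off $B_2$, and then let $R\to\infty$. The hypothesis $u\in H^1(\Rset^N)\cap W^{2,2}_{\rm loc}(\Rset^N)$ makes the pointwise equation and all local integrations by parts legitimate, while $\varphi_R$ renders the multiplier compactly supported.

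For the local part, two integrations by parts give
\begin{align*}
\int_{\Rset^N}(-\Delta u)(x\cdot\nabla u)\varphi_R\dif x &= -\frac{N-2}{2}\int_{\Rset^N}|\nabla u|^2\varphi_R\dif x+o_R(1),\\
\int_{\Rset^N}u\,(x\cdot\nabla u)\varphi_R\dif x &= -\frac{N}{2}\int_{\Rset^N}u^2\varphi_R\dif x+o_R(1).
\end{align*}
The remainders involve $x\cdot\nabla\varphi_R$, which is uniformly bounded and supported in the annulus $\{R\leq|x|\leq 2R\}$; since $|\nabla u|^2$ and $u^2$ lie in $L^1(\Rset^N)$, these terms vanish as $R\to\infty$ by dominated convergence.

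The crux is the nonlocal term. Using $f(u)\nabla u=\nabla F(u)$ and Fubini, rewrite
\[
\int_{\Rset^N}(I_\alpha\ast F(u))(x\cdot\nabla F(u))\varphi_R\dif x
= A_\alpha\iint_{\Rset^N\times\Rset^N}\frac{F(u(y))\,\varphi_R(x)\,x\cdot\nabla_x F(u(x))}{|x-y|^{N-\alpha}}\dif x\dif y,
\]
integrate by parts in $x$, and symmetrize in $(x,y)\leftrightarrow(y,x)$. The joint dilation satisfies Euler's identity
\[
\bigl(x\cdot\nabla_x+y\cdot\nabla_y\bigr)\frac{1}{|x-y|^{N-\alpha}}=-(N-\alpha)\frac{1}{|x-y|^{N-\alpha}},
\]
so a short bookkeeping of the $-N$ contributions from each of the two integrations by parts and the $-(N-\alpha)$ from the kernel collapses the nonlocal term to
\[
\int_{\Rset^N}(I_\alpha\ast F(u))(x\cdot\nabla F(u))\dif x = -\frac{N+\alpha}{2}\int_{\Rset^N}(I_\alpha\ast F(u))F(u)\dif x,
\]
up to a cutoff error involving $\nabla\varphi_R$ paired with $F(u)\otimes F(u)/|x-y|^{N-\alpha}$, which vanishes as $R\to\infty$ by the Hardy--Littlewood--Sobolev inequality and the integrability $F(u)\in L^{2N/(N+\alpha)}(\Rset^N)$ granted by $(F_1)$--$(F_2)$ and $u\in H^1(\Rset^N)$. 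Combining the three contributions and multiplying through by $-1$ yields $\cP(u)=0$.

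The main obstacle is the rigorous manipulation of this convolution term: one has to justify Fubini and the integrations by parts in the presence of the singular kernel $|x-y|^{-(N-\alpha)}$, the absolute convergence of every double integral that arises after differentiating the kernel being secured by the Hardy--Littlewood--Sobolev inequality together with the growth controls $(F_1)$--$(F_2)$. Once this technical step is cleared, the identity reduces to the algebraic bookkeeping displayed above, and the result coincides with the one obtained in \cite[Theorem 3]{MVTAMS} and \cite[Proposition 5.2]{BV2017}.
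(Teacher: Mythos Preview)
The paper does not give its own proof of this proposition; it simply cites \cite[Theorem 3]{MVTAMS} for $N\geq 3$ and \cite[Proposition 5.2]{BV2017} for $N=2$. Your proposal is exactly the standard cutoff Pohozaev multiplier argument carried out in those references: test against $\varphi_R(x)\,x\cdot\nabla u$, handle the local terms by integration by parts, and treat the nonlocal term via the symmetrization and the Euler homogeneity identity for $|x-y|^{-(N-\alpha)}$, controlling the remainders with Hardy--Littlewood--Sobolev and $(F_1)$--$(F_2)$. The constants you obtain are correct, and your identification of the technical obstacle (justifying Fubini and the differentiation of the singular kernel) is precisely the point where \cite{MVTAMS} invests the real work.
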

We define the Pohozaev manifold that
$$\cP=\{u\in H^1(\Rset^N )\setminus\{0\}\st \cP(u)=0\}$$
and set $\cP_G=\cP\cap \mathcal{H}_G$.
To simplify our notations, we define
$$ A(u)=\int_{\Rset^N}\abs{\nabla u}^2, \quad B(u)=\int_{\Rset^N}\abs{u}^2, \quad  \text{ and }\quad  Q(u)=\int_{\Rset^N}(I_\alpha*F(u))F(u).$$

\begin{lemma}\label{pathofpohozaev}
Let $N\geq 2$ and $F\in C^1(\Rset;\Rset)$ satisfy the assumptions $(F_1)$--$(F_2)$. Then for each $u\in \cH_G\setminus\{0\}$ that satisfies $Q(u)>0$, there exist a path $\gamma_u\in C([0,+\infty);\cH_G)$ and a unique $t_u\in (0,+\infty)$ such that
$\gamma_u(0)=0$, $\gamma_{u}(t_u)\in \cP_G$  and $\cE(\gamma_{u}(t_u))=\max_{t\geq 0}\cE(\gamma_u(t))$. Particularly, if $\cP(u)\leq 0$, then $t_u\leq 1$.
\end{lemma}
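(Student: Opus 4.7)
The plan is to construct $\gamma_u$ from the isotropic dilation. For $t>0$ I would set $\gamma_u(t)(x)=u(x/t)$, which lies in $\cH_G$ because each Coxeter reflection commutes with radial scaling: if $g\in G$ then $g\diamond(u(\cdot/t))(x)=u(g^{-1}x/t)=(g\diamond u)(x/t)=\psi(g)\,u(x/t)$. Using the change of variable $x=ty$ together with the Riesz rescaling $I_\alpha(tx)=t^{\alpha-N}I_\alpha(x)$, the three pieces of the energy scale as $A(\gamma_u(t))=t^{N-2}A(u)$, $B(\gamma_u(t))=t^{N}B(u)$, $Q(\gamma_u(t))=t^{N+\alpha}Q(u)$, so that, writing $\phi(t):=\cE(\gamma_u(t))$,
\[
\phi(t)=\tfrac{1}{2}t^{N-2}A(u)+\tfrac{1}{2}t^{N}B(u)-\tfrac{1}{2}t^{N+\alpha}Q(u),
\qquad
t\phi'(t)=\cP(\gamma_u(t)).
\]
Thus the Pohozaev constraint $\cP_G$ is exactly the critical set of $\phi$ on $(0,\infty)$.

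I would then establish existence and uniqueness of $t_u$ by a one-variable analysis of $\phi'$. Since $u\neq 0$ gives $B(u)>0$ (and $A(u)>0$ whenever $N\geq 3$), while $Q(u)>0$ by hypothesis, the factor of $\phi'(t)$ after dividing by the appropriate power of $t$ has the form $\tfrac{N-2}{2}A(u)+\tfrac{N}{2}B(u)\,t^2-\tfrac{N+\alpha}{2}Q(u)\,t^{\alpha+2}$; this expression is non-negative at $t=0^+$, unimodal in $t$, and tends to $-\infty$. Hence it has a unique positive zero $t_u$, and $\phi$ is strictly increasing on $(0,t_u)$ and strictly decreasing on $(t_u,\infty)$. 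This gives the global maximum at $t_u$ with $\gamma_u(t_u)\in\cP_G$. The assumption $\cP(u)\leq 0$ rewrites as $\phi'(1)\leq 0$, which, combined with the unimodality just established, forces $t_u\leq 1$.

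It remains to arrange the boundary condition $\gamma_u(0)=0$ with continuity in $H^1$. For $N\geq 3$ this is automatic since $\|\gamma_u(t)\|^2_{H^1}=t^{N-2}A(u)+t^{N}B(u)\to 0$ as $t\to 0^+$, so one simply defines $\gamma_u(0):=0$. In the planar case $N=2$ the gradient term $A(u)$ is invariant under the dilation and the pure scaling does not approach $0$, so I would splice onto $[0,\tau]$ an auxiliary segment joining $0$ to $u(\cdot/\tau)$—for instance the linear segment $s\mapsto (s/\tau)\,u(\cdot/\tau)$—and keep the dilation on $[\tau,\infty)$. Using $(F_1)$ (which yields $F(s)=\tfrac{\mu}{2}s^2+o(s^2)$ near $0$) one has $\cE(\sigma v)=O(\sigma^2)$ for small $\sigma$ and fixed $v\in H^1$, so by choosing $\tau$ small enough (and smaller than the $t_u$ coming out of the scaling analysis) the energy along the splice stays strictly below $\phi(t_u)$, leaving $t_u$ as the unique global maximizer of $\cE$ along the concatenated path and preserving the characterization $\gamma_u(t_u)\in \cP_G$.

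The main obstacle I expect is precisely this $N=2$ case: one must design the initial segment so as to simultaneously guarantee $H^1$-continuity to $0$, preservation of the $G$-symmetry, and the absence of any spurious competing maximum of $\cE$ on $[0,\tau]$ that would destroy uniqueness of $t_u$ or invalidate the monotonicity argument giving $t_u\leq 1$ under $\cP(u)\leq 0$. All remaining steps reduce to the explicit scaling identities above and the elementary convex-type analysis of the one-variable function $\phi$.
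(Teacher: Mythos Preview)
Your approach is essentially the same as the paper's: both use the isotropic dilation $\gamma_u(t)(x)=u(x/t)$, reduce to the one-variable function $\phi(t)=\cE(\gamma_u(t))$ with $t\phi'(t)=\cP(\gamma_u(t))$, and read off existence/uniqueness of $t_u$ together with $t_u\le 1$ when $\cP(u)\le 0$; for $N=2$ both graft on a linear-in-amplitude segment $t\mapsto (t/\tau_0)\,u(\cdot/\tau_0)$ to restore continuity at $0$. The one place where the paper is more explicit is the $N=2$ splice: rather than merely bounding $\sup_{[0,\tau]}\cE$ below $\phi(t_u)$ as you propose, it shows directly that $t\mapsto\cE(\tilde\gamma_u(t))$ is strictly increasing on $[0,\tau_0]$ by differentiating and controlling $\int(I_\alpha*F(tu/\tau_0))f(tu/\tau_0)u$ via Hardy--Littlewood--Sobolev together with $(F_1)$--$(F_2)$, yielding $\tfrac{d}{dt}\cE(\tilde\gamma_u(t))\ge t\big(\tau_0^{-2}A(u)+B(u)-C\tau_0^{\alpha}\big)>0$ for $\tau_0$ small; this cleanly rules out any spurious maximum on the initial segment, which is exactly the obstacle you flagged, and your energy-bounding alternative can be made rigorous along the lines you indicate since $\phi(t_u)=\tfrac12 A(u)+\tfrac{\alpha t_u^2}{2(2+\alpha)}B(u)>\tfrac12 A(u)$.
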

This lemma essentially is the results of \cite[Propositons 4.1]{MVTAMS} (see \cite[Propositons 5.1]{BV2017} for $N=2$) and the proof relies on the scaling technique developed by Jeanjean and Tanaka \cite{Jeanjean2003}.
\begin{proof}
Let $u\in \cH_G\setminus\{0\}$ that satisfies $Q(u)>0$. We define a path
$\gamma_u :[0,+\infty)\to \cH_G$ such that for $N\geq 3$
$$\gamma_u(t)(x)=
\begin{cases}
u(x/t),& \text{if } t>0,\\
0,& \text{if } t=0.
\end{cases}
$$
Then the path $\gamma_u(t)\in C([0,+\infty);\cH_G)$ and satisfies $\gamma_u(0)=0$. Direct computations show us two functions that
$$\alpha(t):=\cE(\gamma_u(t))=\frac{t^{N-2}}{2}A(u)+\frac{t^N}{2}B(u)-\frac{t^{N+\alpha}}{2}Q(u)$$
and $$\beta(t):=\cP(\gamma_u(t))=\frac{N-2}{2}t^{N-2}A(u)+\frac{N}{2}t^NB(u)-\frac{N+\alpha}{2}t^{N+\alpha}Q(u).$$
It is easy to see that $\beta(t)=t\alpha'(t)$ and
$\alpha(t)$ admits a global maxima because $\alpha(t)>0$ for small $t$ and $\alpha(t)<0$ when $t$ is sufficient large. It then follows that there exists $t_u\in(0,+\infty)$ such that $\alpha(t_u)=\max_{t\geq 0}\alpha(t)$. Moreover, $t_u$ is unique since the function $\beta(t)$ only has one zero point so that $\beta(t_u)=0$ and $\gamma_u(t_u)\in \cP_G$. In fact, suppose in contrary that $t_u^1<t_u^2$ such that
$\beta(t_u^1)=\beta(t_u^2)=0$, we then deduce a contradiction that
$$\Big(\frac{1}{(t_u^1)^2}-\frac{1}{(t_u^2)^2}\Big)\frac{N-2}{2}A(u)=\Big((t_u^1)^\alpha-(t_u^2)^\alpha\Big)\frac{N+\alpha}{2}Q(u).$$
Note that $\beta(t)>0$ on the interval $(0,t_u)$. $\cP(u)\leq 0$ implies that $\beta(1)\leq 0$, this brings us that $(0,t_u)\subset(0,1)$, which amounts to $t_u\leq 1$.

For the case $N=2$, the above path $\gamma_u$ is not anymore continuous at $0$ in $\cH_G$, we shall connect the function $\gamma_u(t_u)$ with $0$ by the multiplication. Since $Q(u)>0$, we can find a $t_u>0$ such that $\beta(t_u)=0$, so that $\gamma_u(t_u)\in \cP_G$. We then define that
$$\tilde{\gamma}_u(t)(x)=
\begin{cases}
\frac{t}{\tau_0}u(\frac{x}{\tau_0t_u}),& \text{if } t\leq \tau_0,\\
u(\frac{x}{tt_u}),& \text{if } t\geq \tau_0.
\end{cases}
$$
The parameter $\tau_0\in (0,1)$ will be determined later. Clearly, we see that $\gamma_u$ is continuous in $\cH_G$. For $t\geq \tau_0$, the Pohozaev identity gives that
\begin{align*}
\tilde{\alpha}(t)=\cE(\tilde{\gamma}_u(t))&=\frac{1}{2}A(u)+\frac{(t_ut)^2}{2}B(u)-\frac{(tt_u)^{2+\alpha}}{2}Q(u)&\nonumber\\
&=\frac{1}{2}A(u)+\Big(\frac{t^2}{2}-\frac{t^{2+\alpha}}{2+\alpha}\Big)t_u^2B(u).
\end{align*}
It is not difficult to check that $\tilde{\alpha}(t)$ achieves its strict maximum at $t=1$ and is negative for sufficient large $t$.
 Without loss of generality, we assume that $t_u=1$. For
$t\leq \tau_0$, by the assumptions $(F_1)$--$(F_2)$ and the Hardy--Littlewood--Sobolev inequality,  we deduce that
 \begin{align*}
\frac{\tau_0}{t}\int_{\Rset^2} (I_\alpha*&F(tu/\tau_0)f(tu/\tau_0)u=\frac{\tau^2_0}{t^2}\int_{\Rset^2}(I_\alpha*F(tu/\tau_0)f(tu/\tau_0)tu/\tau_0\nonumber\\
 &\leq C  \frac{\tau^2_0}{t^2} \Big(\int_{\Rset^2}\abs{F(tu/\tau_0)}^{\frac{4}{2+\alpha}}\Big)^{\frac{2+\alpha}{4}}\Big(\int_{\Rset^2}\abs{f(tu/\tau_0)tu/\tau_0}^{\frac{4}{2+\alpha}}\Big)^{\frac{2+\alpha}{4}}\nonumber&\\
&\leq C  \frac{\tau^2_0}{t^2}\Big(\int_{\Rset^2}\abs{tu/\tau_0}^{\frac{8}{2+\alpha}}+\abs{tu/\tau_0} ^{\frac{4\bar{p}}{2+\alpha}}\Big)^{\frac{2+\alpha}{2}}\leq C \big(\|u\|^4+\|u\|^{2\bar{p}}\big).
\end{align*}
We  conclude our proof by fixing a proper $\tau_0<1$ such that $\frac{\dif }{\dif \tau}\cE(\tilde{\gamma}_u(t))>0$ for $t\leq \tau_0$. Actually, we have for sufficiently small $\tau_0$,
 \begin{align*}
 \frac{\dif }{\dif t} \tilde{\alpha}(t)&=\frac{\dif }{\dif t} \Big(\frac{t^2}{2\tau_0^2}A(u)+\frac{t^2}{2}B(u)-\frac{\tau_0^{2+\alpha}}{2}\int_{\Rset^2}(I_\alpha*F(tu/\tau_0))F(tu/\tau_0)\Big)&\nonumber\\
&= t\Big(\frac{1}{\tau_0^2}A(u)+B(u)-\tau_0^\alpha \frac{\tau_0}{t} \int_{\Rset^2}(I_\alpha*F(tu/\tau_0)f(tu/\tau_0)u\Big)\nonumber&\\
&\geq  t\Big(\frac{1}{\tau_0^2}A(u)+B(u)-C\tau_0^\alpha (\|u\|^4+\|u\|^{2p})\Big)>0.
\end{align*}
Hence, $\tilde{\alpha}(t)$ is strictly increasing when $t\leq \tau_0$,  which allows $t=t_u$ being the unique global maximum point.
\end{proof}

A standard argument shows that the functional $\mathcal{E}$ has the mountain pass geometry, which furnishes a Pohozaev--Palais--Smale sequence at the energy level $c_G$ defined by \eqref{minimizationprob}. This sequence has an additional property that the equation \eqref{pohozaev} asymptotically holds.
\begin{proposition}\label{LpropPSsequence}
Let $N\geq 2$, $\alpha\in(0,N)$. Assume the growth assumptions $(F_0)$--$(F_2)$ hold.
Then there exists a sequence $(u_n)_{n\in\Nset}$ in $\mathcal{H}_G$ such that, as $n\to\infty$,
\begin{align*}\label{PPS}
   \mathcal{E}(u_n)\to c_{G}\in(0,+\infty), \; \mathcal{P}(u_n)\to 0, \;\text{\rm{ and} }  \mathcal{E}'(u_n)\to 0 \;\text{\rm{ strongly in the dual space }}  \mathcal{H}_G^{*}.
\end{align*}
\end{proposition}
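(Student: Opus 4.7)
\medskip\noindent\textbf{Proof proposal.} The plan is to follow the scaling scheme of Jeanjean \cite{Jeanjean1999} adapted to the symmetric subspace $\mathcal{H}_G$, so that the scaling parameter produces the Pohozaev information asymptotically. Concretely, I would introduce the auxiliary functional $\widetilde{\mathcal{E}}:\Rset\times\mathcal{H}_G\to\Rset$ defined, for $N\geq 3$, by
\[
\widetilde{\mathcal{E}}(\theta,v)=\mathcal{E}\bigl(v(e^{-\theta}\,\cdot)\bigr)=\frac{e^{(N-2)\theta}}{2}A(v)+\frac{e^{N\theta}}{2}B(v)-\frac{e^{(N+\alpha)\theta}}{2}Q(v),
\]
whose partial derivative in $\theta$ evaluated at $\theta=0$ is exactly the Pohozaev functional $\mathcal{P}(v)$. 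Since $G$ acts by orthogonal transformations on $\Rset^N$ it commutes with dilations, so $\mathcal{H}_G$ is invariant under the scaling $v\mapsto v(e^{-\theta}\,\cdot)$ and $\widetilde{\mathcal{E}}$ is a well-defined $C^1$ functional on $\Rset\times\mathcal{H}_G$.

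\medskip\noindent First I would verify that $\widetilde{\mathcal{E}}$ has the mountain pass geometry on $\Rset\times\mathcal{H}_G$ with the same critical value as $\mathcal{E}$. The lower bound $\widetilde{\mathcal{E}}(\theta,v)\geq\rho>0$ for small $\|v\|$ (uniform in $\theta$ in any bounded interval) follows from $(F_1)$, $(F_2)$ and the Hardy--Littlewood--Sobolev inequality, exactly as in \cite{MVTAMS,BV2017}. By $(F_0)$ one constructs a path in $\mathcal{H}_G$ along which $\mathcal{E}$ eventually becomes negative: pick a $G$-symmetric test function $w\in\mathcal{H}_G$ with $Q(w)>0$ (built by symmetrization of a bump concentrated on a large piece of the fundamental domain), and then by Lemma \ref{pathofpohozaev} the rescaled path belongs to $\Gamma_G$. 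Defining the MP level
\[
\widetilde{c}_G=\inf_{\widetilde{\gamma}\in\widetilde{\Gamma}_G}\max_{t\in[0,1]}\widetilde{\mathcal{E}}(\widetilde{\gamma}(t)),\qquad \widetilde{\Gamma}_G=\{\widetilde{\gamma}\in C([0,1];\Rset\times\mathcal{H}_G)\st \widetilde{\gamma}(0)=(0,0),\ \widetilde{\mathcal{E}}(\widetilde{\gamma}(1))<0\},
\]
one shows $\widetilde{c}_G=c_G$: any $\gamma\in\Gamma_G$ lifts to $(0,\gamma)\in\widetilde{\Gamma}_G$ giving $\widetilde{c}_G\leq c_G$, and conversely any path $(\theta,v)$ in $\widetilde{\Gamma}_G$ projects back to the rescaled path $t\mapsto v(t)(e^{-\theta(t)}\,\cdot)\in\Gamma_G$ with identical energy.

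\medskip\noindent Applying Ekeland's variational principle to $\widetilde{\mathcal{E}}$ on $\Rset\times\mathcal{H}_G$ produces a sequence $(\theta_n,v_n)$ such that $\widetilde{\mathcal{E}}(\theta_n,v_n)\to c_G$, $\partial_\theta\widetilde{\mathcal{E}}(\theta_n,v_n)\to 0$ and $\|\partial_v\widetilde{\mathcal{E}}(\theta_n,v_n)\|_{\mathcal{H}_G^*}\to 0$. Setting $u_n(x)=v_n(e^{-\theta_n}x)\in\mathcal{H}_G$, the scaling invariance gives $\mathcal{E}(u_n)\to c_G$, while the $\theta$-derivative condition rewrites as $\mathcal{P}(u_n)\to 0$, and the $v$-derivative condition, after the obvious change of test functions $\varphi\mapsto\varphi(e^{\theta_n}\,\cdot)$, yields $\mathcal{E}'(u_n)\to 0$ in $\mathcal{H}_G^*$ provided $(\theta_n)$ stays bounded. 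Boundedness of $(\theta_n)$ is obtained from $\mathcal{E}(u_n)\to c_G$ together with $\mathcal{P}(u_n)\to 0$, which combined show that $A(u_n)$ and $B(u_n)$ remain bounded and bounded away from zero, ruling out $\theta_n\to\pm\infty$. Finally, Palais' principle of symmetric criticality ensures that convergence of $\mathcal{E}'(u_n)$ in $\mathcal{H}_G^*$ is enough to carry the argument forward in later sections.

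\medskip\noindent The main obstacle I anticipate is the planar case $N=2$: the scaling $u(\cdot/t)$ does not connect continuously to $0$ at $t=0$, so the naive dilation path fails to lie in $\Gamma_G$. I would deal with this exactly as in the second half of the proof of Lemma \ref{pathofpohozaev}, splicing the dilation with a multiplicative path $t\mapsto (t/\tau_0)u(\cdot/\tau_0 t_u)$ for $t\leq\tau_0$ and choosing $\tau_0>0$ small so that $\widetilde{\mathcal{E}}$ is strictly increasing on this initial segment; this recovers both the mountain pass geometry and the identity $\widetilde{c}_G=c_G$ in dimension two, after which the rest of the argument proceeds unchanged.
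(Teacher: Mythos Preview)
Your approach is essentially the one the paper takes: both rely on Jeanjean's scaling trick \cite{Jeanjean1999}, adapted to the $G$-symmetric subspace $\mathcal{H}_G$ and invoking \cite[Proposition~2.1]{MVTAMS} and \cite[Proposition~3.1]{BV2017} for the details. The paper's proof is in fact very brief, only spelling out the construction of a $G$-symmetric test function $w_0\in\mathcal{H}_G$ with $Q(w_0)>0$ (namely $w_0=s_0\sum_{g\in G}\psi(g)\,g\diamond\chi_{B_1\cap\mathcal{F}}$, where evenness of $F$ gives $Q(w_0)=F(s_0)^2 Q(\chi_{B_1})>0$) and then deferring entirely to those references.

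One point in your write-up needs correcting. Your argument for the boundedness of $(\theta_n)$ does not work as stated: knowing that $A(u_n)$ and $B(u_n)$ stay bounded and bounded away from zero gives information about $u_n=v_n(e^{-\theta_n}\,\cdot)$, but none about $\theta_n$ itself, since you have no independent control on $v_n$. The standard fix, which is what \cite{Jeanjean1999,MVTAMS} actually do, is to exploit the equality $\widetilde{c}_G=c_G$ more sharply: choose almost-optimal paths $\gamma_n\in\Gamma_G$ and lift them to $(0,\gamma_n)\in\widetilde{\Gamma}_G$; the quantitative min-max (or Ekeland) principle then produces $(\theta_n,v_n)$ at distance $o(1)$ from the set $\{0\}\times\gamma_n([0,1])$, forcing $\theta_n\to 0$ directly. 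With this adjustment your outline is correct.
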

\begin{proof}

To show $c_G<+\infty$, we need construct a path in $\cH_G$. By Lemma \ref{pathofpohozaev}, this turns out to find a function $w_0\in \cH_G$ such that $Q(w_0)>0$.
 In order to derive this, by recalling the nontrivial assumption $(F_0)$ and the unit ball $B_1\subset\Rset^N$ centered at the origin, we take $w_0=s_0\sum_{g\in G} \psi(g) g\diamond \chi_{B_1\cap \mathcal{F}}(x)$ so that $w_0\in \cH_G$ and $Q(w_0)=F(s_0)^2Q(\chi_{B_1})>0$. Here $\mathcal{F}$ is the strict fundamental domain of the Coxeter group $G$ and $\chi_{B_1\cap\cF}$ denotes the characteristic function on $B_1\cap\cF$. In fact, for any $\bar{g}\in G$, we have
$$\bar{g}\diamond w_0=s_0\sum_{g\in G} \psi(g) (\bar{g}g)\diamond \chi_{B_1\cap \mathcal{F}}(x)=s_0\psi(\bar{g})\sum_{g\in G} \psi(\bar{g}g) (\bar{g}g)\diamond \chi_{B_1\cap \mathcal{F}}(x)=\psi(\bar{g})w_0.$$
The remanent proof can be verbatim copied from \cite[Proposition 2.1]{MVTAMS} and \cite[Proposition 3.1]{BV2017}, respectively. The only difference is the Sobolev space $H^1(\Rset^N
)$ should be replaced by the framework subspace $\cH_G$, so we omit the details but emphasis that the scaling technique of Jeanjean \cite{Jeanjean1999} can be used to get the additional asymptotic property for the Palais--Smale sequence that $\cP(u_n)\to 0$ as $n\to\infty$.
\end{proof}

We remark that the nontrivial assumption $(F_0)$ indicates both the paths set $\Gamma_G$ and the Pohozaev manifold $\cP$ are nonempty.  We shall prove the following characterizations.
\begin{proposition} \label{energycharacterization}
Let $N\geq2$, $\alpha\in(0, N)$. Assume that the assumptions $(F_0)$--$(F_2)$ hold. Then
$$c_G=p_G=m_G.$$
Here $p_G=\inf_{\mathcal{P}_G}\mathcal{E}$ and $m_G=\inf_{\mathcal{M}_G }\mathcal{E}$ with $\mathcal{M}_G=\{ u\in \mathcal{H}_G\setminus\{0\} \st \mathcal{E}'(u)=0 \}.$
\end{proposition}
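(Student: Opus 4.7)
I will compare the three numbers via the chain $m_G \geq p_G$, $p_G = c_G$, and $m_G \leq c_G$, the last of which I close only after the existence of a critical point at the mountain pass level is settled in Section~\ref{section5}. The Pohozaev identity (Proposition~\ref{proppohozaevidentity}) gives $\cM_G \subset \cP_G$, so $m_G \geq p_G$ is immediate. Throughout I exploit the algebraic identity
\[
(N+\alpha)\,\cE(u) - \cP(u) = \tfrac{\alpha+2}{2}\,A(u) + \tfrac{\alpha}{2}\,B(u),
\]
which is strictly positive for $u \neq 0$; in particular, $\cE(u)<0$ with $u\neq 0$ forces $\cP(u)<0$, and any $u\in\cP_G$ automatically satisfies $Q(u)>0$.

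\textbf{The inequality $c_G \leq p_G$.} For $u\in\cP_G$, Lemma~\ref{pathofpohozaev} supplies a path $\gamma_u\in C([0,+\infty);\cH_G)$ with $\gamma_u(0)=0$ together with a unique $t_u\in(0,+\infty)$ at which $\gamma_u(t_u)\in\cP_G$ and $\max_{t\geq 0}\cE(\gamma_u(t))=\cE(\gamma_u(t_u))$. A direct check on the explicit dilation $u(x/t)$ for $N\geq 3$ and on the modified path used in the lemma for $N=2$ shows that when $u$ already lies in $\cP_G$ the unique zero of $\cP\circ\gamma_u$ is $t_u=1$ and $\gamma_u(1)=u$; hence $\max_{t\geq 0}\cE(\gamma_u(t))=\cE(u)$. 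Choosing $T>t_u$ large enough that $\cE(\gamma_u(T))<0$ and reparameterizing to $[0,1]$ produces an admissible element of $\Gamma_G$, yielding $c_G \leq \cE(u)$. Taking the infimum over $u\in\cP_G$, and, separately, over $u\in\cM_G\subset\cP_G$, delivers both $c_G \leq p_G$ and $c_G \leq m_G$.

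\textbf{The inequality $p_G \leq c_G$.} Given $\gamma\in\Gamma_G$, set $t_0=\sup\{t\in[0,1]\st\gamma(t)=0\}$; since $\cE(\gamma(1))<0$ forces $\gamma(1)\neq 0$, continuity yields $t_0<1$ and $\gamma(t)\neq 0$ on $(t_0,1]$. The identity above together with $\cE(\gamma(1))<0$ gives $\cP(\gamma(1))<0$. As $t\to t_0^+$ one has $\gamma(t)\to 0$ in $H^1(\Rset^N)$; the growth hypotheses $(F_1)$--$(F_2)$, the Hardy--Littlewood--Sobolev inequality and (for $N=2$) the Gagliardo--Nirenberg interpolation $\|u\|_q^q\leq C\|u\|_2^2\|\nabla u\|_2^{q-2}$ applied with $q=8/(2+\alpha)>2$ (crucially using $\alpha<2$ in dimension two) produce the coercivity estimate $\cP(u)>0$ for $\|u\|_{H^1}$ small and $u\neq 0$. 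Hence $\cP(\gamma(t))>0$ for $t$ slightly larger than $t_0$, and the intermediate value theorem applied to the continuous function $t\mapsto \cP(\gamma(t))$ on $[t_0,1]$ furnishes $t^\ast\in(t_0,1)$ with $\gamma(t^\ast)\in\cP_G$, whence $\max_{t}\cE(\gamma(t))\geq\cE(\gamma(t^\ast))\geq p_G$. The delicate point is exactly this $N=2$ coercivity of $\cP$ near zero, where the gradient term is absent from $\cP$ and only the $L^2$ norm is available; the Gagliardo--Nirenberg bound then supplies the super-quadratic control of $Q(u)$ needed to close the estimate. Combining everything gives $c_G=p_G\leq m_G$, and the remaining $m_G\leq c_G$ is closed in Section~\ref{section5} by extracting a critical point at level $c_G$ from the Pohozaev--Palais--Smale sequence of Proposition~\ref{LpropPSsequence}.
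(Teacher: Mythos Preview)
Your proof is correct, and the core ingredients ($c_G\le p_G$ via Lemma~\ref{pathofpohozaev}, $p_G\le m_G$ from the inclusion $\cM_G\subset\cP_G$, and $m_G\le c_G$ deferred to Section~\ref{section5}) coincide with the paper's argument. The difference is that you insert an additional, independent proof of $p_G\le c_G$ via a path-crossing argument: showing that every $\gamma\in\Gamma_G$ must intersect $\cP_G$ because $\cP(\gamma(t))>0$ for $t$ near $t_0$ while $\cP(\gamma(1))<0$. This step, including the delicate coercivity estimate for $\cP$ near zero in dimension two, is correct but unnecessary here. The paper simply closes the chain $c_G\le p_G\le m_G\le c_G$ once $m_G\le c_G$ is established in Section~\ref{section5}, obtaining $p_G\le c_G$ for free. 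Your route has the mild conceptual advantage of establishing $c_G=p_G$ without appealing to the existence result, at the cost of the extra Gagliardo--Nirenberg analysis in the planar case; the paper's route is shorter because it lets the induction in Section~\ref{section5} do the work.
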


\begin{proof} For any $u\in \cP_G$, we have $\cP(u)=0$ so that $Q(u)>0$. By Lemma \ref{pathofpohozaev} and a suitable change of variable, we conclude that $\gamma_u(t)\in \Gamma_G$. It then follows that $c_G\leq \max_{t\geq 0}\cE(\gamma_u(t))=\cE(u)$. The arbitrariness of $u\in \cP_G$ implies that  $c_G\leq p_G$. We easily deduce that $p_G\leq m_G$ since $\cM_G \subset \cP_G$.  The last inequality $m_G\leq c_G$ will be proved by the existence of a $G$-saddle solution $u\in \cH_G$ that satisfies $\cE(u)\leq c_G$. It turns out that this proposition will be complete by an induction argument.
\end{proof}

\section{Exponential decays for the solutions}
\label{exponent}
We shall prove that solutions of Choquard equation \eqref{eqChoquard} decay exponentially at infinity,
which shall play a big part in determining the threshold values for the compactness, see Proposition \ref{energystrictinequality} below.

\noindent\textbf{Proof of Theorem \ref{thm1.2}.}
The local regularity that $u\in W^{2,s}_{\rm{loc}}(\Rset^N)$ for $s\geq 1$ follows from \cite[Theorem 2]{MVTAMS} for $N\geq 3$ and from \cite[Proposition 5.1]{BV2017} for $N=2$, respectively.  Next, we show that $I_\alpha*F(u)\in L^\infty(\Rset^N)$. For $N\geq 3$, let $H(u)=F(u)/u$ and $K(u)=f(u)$. In view of the assumptions $(F_1)$--$(F_2)$, we deduce that there exists constant $C>0$ such that for $x\in\Rset^N$,
$$\abs{H(u)},\abs{K(u)}\leq C \big(\abs{u(x)}^{\frac{\alpha}{N}}+\abs{u(x)}^{\frac{2+\alpha}{N-2}}\big).$$
It then follows that $H,K\in L^{\frac{2N}{\alpha}}(\Rset^N)+L^{\frac{2N}{2+\alpha}}(\Rset^N)$. Here a function $g\in L^{p_1}(\Rset^N)+L^{p_2}(\Rset^N)$ means that $g\leq \abs{g_1}+\abs{g_2}$ with $g_i\in L^{p_i}(\Rset^N)$ for $i=1,2$. We then conclude by \cite[Proposition 3.1]{MVTAMS} that $u\in L^p(\Rset^N)$ for any $p\in [2,\frac{N}{\alpha}\frac{2N}{N-2})$. For $N=2$, we conclude directly by the Rellich--Kondrachov embedding theorem that $u\in L^p(\Rset^N)$ for any $p\in [2,\infty)$.

We denote for convenience $\frac{2N}{N-2}=\infty$ in case that $N=2$. By the growth assumptions $(F_1)$--$(F_2)$, there exists a constant $C>0$  such that for all $t\in \Rset$
$$\abs{F(t)}\leq C(\abs{t}^2+\abs{t}^{\frac{N+\alpha}{N-2}}).$$%
For the planar case, the exponent $\frac{N+\alpha}{N-2}$ will be replaced by fixed $\bar{p}\in[2,\infty)$.
We then conclude that  $I_\alpha*F(u)\in L^\infty(\Rset^N)$ by proving that  $I_\alpha*\abs{u}^s\in L^{\infty}(\Rset^N)$ for any $s\in (\frac{2\alpha}{N},\frac{2N}{N-2})$.
In fact, we have by direct computations that
\begin{equation}\label{eqboundedRiesz}
\begin{split}
\int_{\Rset^N} \frac{\abs{u(y)}^s}{\abs{x-y}^{N-\alpha}} \dif y &=\int_{\Rset^N}\frac{\abs{u(x-y)}^s}{\abs{y}^{N-\alpha}}\dif y\\
&=\int_{B_1}\frac{\abs{u(x-y)}^s}{\abs{y}^{N-\alpha}} \dif y +\int_{B_1^c}\frac{\abs{u(x-y)}^s}{\abs{y}^{N-\alpha}} \dif y.
\end{split}
\end{equation}
By choosing $t\in (\frac{N}{\alpha},\frac{N}{s\alpha}\frac{2N}{N-2})$ and $r\in (\frac{2}{s},\frac{N}{\alpha})$ such that $\frac{t}{t-1}<\frac{N}{N-\alpha}<\frac{r}{r-1}$,
we deduce that  $\abs{x}^{\alpha-N}\in L^{\frac{t}{t-1}}(B_1)$ and $\abs{x}^{\alpha-N}\in L^{\frac{r}{r-1}}(B^c_1)$.
Here and in the sequel, the notation $B_r$ denotes the ball in $\Rset^N$ with radius $r$ and centered at the origin.
By the H\"older inequality, we have
\begin{equation}\label{eqboundedRieszc}
\begin{split}
\int_{\Rset^N}\frac{\abs{u(y)}^s}{\abs{x-y}^{N-\alpha}}\dif y&\leq  \Big(\int_{B_1}\abs{x}^{\frac{t(\alpha-N)}{t-1}}\dif x \Big)^{\frac{t-1}{t}}\Big(\int_{\Rset^N}\abs{u}^{st}\Big)^{\frac{1}{t}}\\
&\quad +\Big(\int_{B^c_1}\abs{x}^{\frac{r(\alpha-N)}{r-1}}\dif x \Big)^{\frac{r-1}{r}}\Big(\int_{\Rset^N}\abs{u}^{sr}\Big)^{\frac{1}{r}}.
 \end{split}
 \end{equation}
Hence, \eqref{eqboundedRiesz} and \eqref{eqboundedRieszc} conclude the  claim.
Note that $u\in W^{2,s}_{\rm{loc}}(\Rset^N)$ for $s\geq 1$. We then derive by combining the Rellich--Kondrachov embedding theorem and a standard Schauder estimate that $u\in C^{2,\lambda}(B_1(x))$ for some $\lambda\in(0,1)$ and there exists a constant $C$ independent of $x$ such that
\begin{equation*}
\abs{u(x)}\leq \|u\|_{C^{1,\lambda}(B_1(x))}\leq C\|u\|_{W^{2,s}(B_1(x))}.
\end{equation*}
This leads to $\lim_{\abs{x}\to+\infty}\abs{u(x)}=0$. We thus have $u\in L^\infty(\Rset^N)$ since $u\in C^1(\Rset^N)$.
Moreover, for $s\in [2, \frac{N+\alpha}{N-2}]$, we can take $\varepsilon\in (0,s-\frac{2\alpha}{N})$ so that
\begin{equation*}\label{eqasymptoticRiesz}
\begin{split}
\int_{\Rset^N}&\frac{\abs{u(y)}^s}{\abs{x-y}^{N-\alpha}}\dif y
= \int_{B_{\abs{x}/2}}\frac{\abs{u(y)}^s}{\abs{x-y}^{N-\alpha}}\dif y
+\int_{B^c_{\abs{x}/2}}\frac{\abs{u(y)}^s}{\abs{x-y}^{N-\alpha}}\dif y\\
&\leq \C\abs{x}^{\alpha-N}\int_{\Rset^N}\abs{u}^s + \|u\|^{\varepsilon}_{L^\infty( B^c_{\abs{x}/2})}
(I_\alpha*\abs{u}^{s-\varepsilon})(x).
\end{split}
\end{equation*}
This gives that  $\lim_{\abs{x}\to +\infty} (I_\alpha*F(u))(x)=0$, which, together with the growth assumption $(F_1)$, amounts to
$$
\lim_{\abs{x}\to+\infty}\big(I_\alpha*F(u)\big)\abs{f(u)/u} =0.
$$
Therefore, there exists  $\gamma\in (0,1)$ such that
$(I_\alpha*F(u))\abs{f(u)}\leq \gamma \abs{u(x)}$ for sufficiently large $x$.
Then the desired decay estimate can be derived by using the Kato's inequality \cite{Kato1972} and by virtue of the comparison principle, we omit the details and refer to \cite{MVJFA,XiaWang2018}.

\section{Energy estimates}
\label{secenerest}

\begin{proposition}\label{energystrictinequality}
Let $N\geq 2$ and $\alpha\in(0,N)$. Assume that $G\in \mathcal{G}_k$ with $1\leq k\leq N$.
Then there exists $c_G^*$ such that
\begin{align*}
0< c_{G}<c_G^*&:=\min\big\{\abs{Gq}c_{S_{q}} \big| q\in \partial^{k-1}\mathcal{F}\big\}<\min\big\{\abs{Gq}c_{S_{q}} \big| q\in \partial^{i}\mathcal{F},i=0,1,\cdots,k-2\big\}.
\end{align*}
\end{proposition}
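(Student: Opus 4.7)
The plan is an induction on the rank $k$ of $G$, using Proposition \ref{energycharacterization} to identify $c_S$ as the $S$-groundstate energy $\cE(u_S)$ for every reflection subgroup $S\subsetneq G$, and Theorem \ref{thm1.2} to supply the exponential decay of each $u_S$. The statement breaks into three pieces. The positivity $c_G>0$ is immediate from the mountain-pass geometry: $(F_1)$--$(F_2)$ together with the Hardy--Littlewood--Sobolev inequality force $Q(u)=o(\|u\|^2)$ in the $H^1$-norm as $\|u\|\to 0$, so $\cE(u)\geq \tfrac{1}{4}\|u\|^2$ on a small $H^1$-sphere that every $\gamma\in\Gamma_G$ must traverse.

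For the main strict inequality $c_G<c_G^*$, select $q_*\in\partial^{k-1}\cF$ attaining the minimum defining $c_G^*$, put $S_*:=S_{q_*}$, and let $u_*\in\cH_{S_*}$ be the $S_*$-groundstate furnished inductively, so that $u_*\in\cP_{S_*}$ and $u_*$ decays exponentially. Introduce the test function
\[
w_R(x)=\sum_{[g]\in G/S_*}\psi(g)\,(g\diamond u_*)\bigl(x-Rgq_*\bigr),
\]
for large $R>0$ with the sum taken over coset representatives; the $G$-equivariance $\bar g\diamond w_R=\psi(\bar g)\,w_R$ is checked exactly as in the construction of the test path inside the proof of Proposition \ref{LpropPSsequence}. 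By the exponential decay of $u_*$ the quadratic pieces obey $A(w_R)+B(w_R)=|G/S_*|\bigl(A(u_*)+B(u_*)\bigr)+O(e^{-cR})$, while expanding the Riesz kernel at long distances yields
\[
Q(w_R)=|G/S_*|\,Q(u_*)+\sum_{[g]\neq[h]}\frac{A_\alpha\bigl(\int_{\Rset^N}F(u_*)\bigr)^2}{R^{N-\alpha}\abs{gq_*-hq_*}^{N-\alpha}}+o\bigl(R^{\alpha-N}\bigr),
\]
and the cross sum is strictly positive, since the distances $|gq_*-hq_*|$ are positive across distinct cosets (the stabilizer of $q_*$ is exactly $S_*$). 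Hence $\cP(w_R)<0$ for $R$ large, so by Lemma \ref{pathofpohozaev} the unique maximizer $t_R$ of $\cE(\gamma_{w_R}(t))$ satisfies $t_R<1$, and the closed-form value
\[
\max_{t\geq 0}\cE(\gamma_{w_R}(t))=\frac{\alpha+2}{2(N+\alpha)}t_R^{N-2}A(w_R)+\frac{\alpha}{2(N+\alpha)}t_R^{N}B(w_R),
\]
regarded as a smooth function of $(A,B,Q)$, is strictly decreasing in $Q$ at fixed $A,B$. The polynomially-large gain in $Q$ dominates the exponentially-small shifts in $A$ and $B$, so for $R$ sufficiently large this maximum is strictly below $|G/S_*|\cE(u_*)=|Gq_*|c_{S_*}=c_G^*$. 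Proposition \ref{energycharacterization} then gives $c_G\leq\max_t\cE(\gamma_{w_R}(t))<c_G^*$. The planar case $N=2$ proceeds identically using the modified path in the proof of Lemma \ref{pathofpohozaev}.

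The stratum comparison follows by iterating the main inequality. For any reflection subgroup $S\subsetneq G$ of rank $j\geq 2$, applying the proposition inductively to $S$ gives $c_S<c_S^*\leq(|S|/|S'|)c_{S'}$ for every $S'\subsetneq S$ of rank $j-1$ appearing as a point stabilizer in $S$; equivalently $c_S/|S|<c_{S'}/|S'|$, so $c_S/|S|$ is strictly decreasing in the rank of $S$ along every chain of reflection subgroups. Since $|Gq|c_{S_q}=|G|\cdot(c_{S_q}/|S_q|)$, this monotonicity yields $|Gq|c_{S_q}<|Gq'|c_{S_{q'}}$ whenever $q\in\partial^{k-1}\cF$ has a rank-$(k-1)$ stabilizer and $q'\in\partial^i\cF$ with $i\leq k-2$ has a stabilizer of strictly smaller rank; passing to minima completes the proof. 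The principal obstacle is the cross-interaction analysis in the second paragraph: one must justify the leading positive $R^{\alpha-N}$ contribution to $Q(w_R)$, which is straightforward when $\int F(u_*)\neq 0$ but in the degenerate case requires an expansion to the next non-vanishing moment of $F(u_*)$ whose sign is controlled by the $S_*$-symmetry; one must also ensure that this polynomial gain strictly dominates the $O(e^{-cR})$ interaction corrections appearing in $A(w_R)$ and $B(w_R)$.
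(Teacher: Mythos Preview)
Your proposal follows the paper's strategy closely: induction on the rank, test functions assembled from translated copies of the lower-rank saddle solution $U_{S_q}$, projection onto the Pohozaev manifold via Lemma \ref{pathofpohozaev}, and comparison of the polynomial $R^{\alpha-N}$ gain in $Q$ against exponentially small errors in $A$ and $B$. Two execution differences are worth noting. First, the paper multiplies each translate by a compactly supported cut-off $\zeta_R$, making the supports genuinely disjoint so that $F(U_R^q)$ restricted to each ball is exactly $F(\pm\zeta_R U_{S_q})$; your bare-translate construction is cleaner to write down (and your coset-sum verification that $w_R\in\cH_G$ is more transparent than the paper's case analysis via the normal-subgroup decomposition of Lemma \ref{normalsubgroup}), but it obliges you to control the nonlinear discrepancy $F(w_R)-\sum_{[g]}F\bigl(u_*(g^{-1}\cdot-Rq_*)\bigr)$, which your sketch does not address and which the cut-offs are designed precisely to avoid. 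Second, your monotonicity argument for the final inequality---that $c_S/|S|$ is strictly decreasing along chains of standard parabolic subgroups, whence $|Gq|c_{S_q}=|G|\,c_{S_q}/|S_q|$ is minimized on the top stratum---is more explicit than the paper's treatment, which essentially leaves that comparison to the reader.

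The degenerate case $\int_{\Rset^N}F(u_*)=0$ that you flag at the end is a genuine obstacle, and your proposed remedy (expand the Riesz interaction to the next nonvanishing moment and invoke $S_*$-symmetry to fix its sign) is not substantiated and is not obviously salvageable for general even $F$ satisfying only $(F_0)$--$(F_2)$. The paper's own proof shares this lacuna: the constant $C$ in its final displayed estimate absorbs the factor $\bigl(\int_{\Rset^N}F(U_{S_q})\bigr)^2$ coming from \eqref{Leqdenominatorestimate}, so the strict inequality there also tacitly assumes this integral is nonzero. In the homogeneous case $F(u)=|u|^p$ this is automatic, but under the general hypotheses neither argument closes the gap.
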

According to the classification results of the finite Coxeter group, $\mathcal{W}_1$ contains only ${\rm\bf{A}}_1$ and its fundamental domain is $\Rset^N_+$. In this case, the threshold  value $c_{G}^*=2c_0$ \cite[Proposition 2.4]{GhimentiVanSchaftingen2016}. For $k=2$, $\mathcal{W}_2$ is consist of dihedral groups ${\rm\bf{I}}_2(m)$ for $m\geq 2$ and the associated fundamental domain is a sectorial region with dihedral angle ${\pi}/{m}$. The last two inequalities in this case turn out to be $c_G<c_G^*:=mc_{{\bf{A}_1}}<2mc_0$ \cite[Proposition 3.1]{XiaWang2018}.

\begin{proof}
We shall construct test functions for each rank by induction. We remark that Proposition \ref{energycharacterization} holds for $G=\{1\}$, see \cite{BV2017,MVTAMS}. This will be the start point. We first take a cut-off function $\zeta\in C_c^2(\Rset^N;[0,1])$ such that $\zeta=1$ on $B_1$ and $\supp\zeta\subset B_{2}$.
We begin our construction from $k=1$. We then have $G=\rm\bf{A}_1$ and $\mathcal{F}=\Rset^N_+$.
Let $U_0$ be the ground state solution for the Choquard equation \eqref{eqChoquard}.
We then define for $x=(x',x^N)\in \Rset^N$ that
$$
U_{R}(x):= \big(\zeta_RU_{0}\big)(x',x^N-3R)- \big(\zeta_RU_{0}\big)(x',-x^N-3R).
$$
It is easy to check that $\tau_N\diamond U_R=-U_R$ where $\tau_N$ is the reflection such that $\tau_N(x',x^N)=(x',-x^N)$.
Repeating the subsequent arguments, we conclude that $c_{{\rm\bf{A}}_1}<2c_0$. With this estimate, we shall prove successively that the Choquard equation \eqref{eqChoquard} admits a saddle solution $U_{{\rm\bf{A}_1}}$ such that $\mathcal{E}(U_{{\rm\bf{A}_1}})\leq c_{{\rm\bf{A}_1}}$, which implies that Proposition \ref{energycharacterization} holds for $k=1$.

Recall that $G=W_{k_1}\times\cdots\times W_{k_\ell}:=W_{k_1}\times W$. We now suppose this proposition holds for $k-1$.
By Lemma \ref{lemmainduction}, for each $q\in \partial^{k-1}\mathcal{F}\cap \partial B_1$, we see that $S_q\in \mathcal{G}_{k-1}$.
Without loss of generality, we may assume $\partial^{k-1} \mathcal{F}\subset \cap_{s\in \mathcal{S}_{k-1}\subset \mathcal{S}} H_s$ with  $\mathcal{S}\setminus\mathcal{S}_{k-1}=\{s_1\}$ and $s_1\in W_{k_1}$.
We carry out the proof into two subcases. The first case is $k_1=1$.  We thus have $S_q=W$ and $q$ is perpendicular to $ H_{s_1}$. Hence, we may assume that $H_{s_1}=\{(x_1,x')\st x_1=0\}$. By the inductive assumption, the Choquard equation \eqref{eqChoquard} admits saddle solutions $U_{S_{q}}\in H^{1}(\Rset^{N})$ that achieves $c_{S_{q}}$ and satisfies that
$$
w \diamond U_{S_{q}}= \psi(w) U_{S_{q}}, \qquad \forall w\in W,
$$
where $\psi: G \to \{\pm 1\}$ is the epimorphism. Particularly, Proposition \ref{energycharacterization} holds for any proper subgroup  of $G$, that is, $c_{S_{q}}=p_{S_q}=m_{S_q}$.
We define
$$
U^{q}_{R}(x):= \big(\zeta_RU_{S_{q}}\big)(x^1-3R,x')- \big(\zeta_RU_{S_{q}}\big)(-x^1-3R,x').$$
We then conclude $s_1\diamond U^q_R=-U_R^q$ and $w\diamond U_R^q=\psi(w)U^q_R$ for any $w\in W$, so that $U^q_R\in\mathcal{H}_G$ since
$s_1w=ws_1$ for $w\in W$. Here the group action $S_q$ is realized in the space $\{0\}\times\Rset^{k-1}=\Rset^{k-1}$.

We now deal with the remaining case that $k_1\geq 2$. We realize the group action $S_q$ in the whole space $\Rset^k$. In other words, we assume the saddle solution $U_{S_q}\in H^1(\Rset^N)$ that achieves $c_{S_q}$ satisfies
$$
w \diamond U_{S_{q}}= \psi(w) U_{S_{q}}, \qquad \forall\; w\in S_{q}.
$$
Note that $W_{k_1}$ is irreducible and $s_1\not\in S_q$. By the Coxeter's classification theorem \ref{classification} (or by the connectedness of the Coxeter diagrams), we see that there exists at most three subgroups of $W_{k_1}$ such that $S_q=W_{t_1}\times W_{t_2}\times W_{t_3}\times W$, here $W_{t_i}\in \mathcal{W}_{t_i}$, $t_1\geq 1$ and $t_2,t_3\geq 0$ satisfy that  $t_1+t_2+t_3=k_1-1$.
Recalling Lemma \ref{normalsubgroup}, we then define a test function
$$
U^{q}_{R}(x):=\frac{1}{\abs{S_{q}}}\sum_{n\in N_{k}}\big(\zeta_RU_{S_{q}}\big)(n^{-1}s_1x-l_G Rq)-\big(\zeta_RU_{S_{q}}\big)(n^{-1}x-l_G Rq)
$$
where $N_{k}=N_{k_1}\times W$ and the constant $l_G$ is determined so that the balls $B_{2R}(l_GRGq)$ are separated far away from each other.
Here we recall that $Gq$ is the orbit of $q\in \partial^{k-1}\cF\cap \partial B_1$ under the action of $G$. This can be done provided that $l_G$ is sufficiently large, since there exist constants $K_1,K_2>0$ that depend only on the group $G$ such that for $x\neq y\in Gq$,
 $$K_1:=\min_{x\neq y\in Gq}\abs{x-y}\leq \abs{x-y}\leq \max_{x\neq y\in Gq}\abs{x-y}:=K_2.$$
 We claim that $U^{q}_R\in \mathcal{H}_G$.
By the facts that $wW_{k_1}=W_{k_1}w$ and $wq=q$ for $w\in W$, we deduce
\begin{equation*}
w^{-1}\diamond U^{q}_R=\frac{1}{\abs{S_{q}}}\sum_{n\in N_k}\big(\zeta_RU_{S_{q}}\big)(n^{-1}s_1wx-l_G Rq)-\big(\zeta_RU_{S_{q}}\big)(n^{-1}wx-l_G Rq)=\psi(w)U_R^{q}.
\end{equation*}
 On the other hand, for every $s\in \mathcal{S}\cap (W_{t_1}\times W_{t_2}\times W_{t_3})$, it follows that $s\in W_{k_1}=N_{k_1}\rtimes \langle s_1\rangle $, so that
 there exist unique $n_s, n_s'\in N_{k_1}$ such that $s=s_1n_s=n_s's_1$.
 We then have
   \begin{equation*}\begin{split}
     &s\diamond U^{q}_R=\frac{1}{\abs{S_{q}}}\sum_{n\in N_k}\big(\zeta_RU_{S_{q}}\big)(n^{-1}s_1sx-l_G Rq)-\big(\zeta_RU_{S_{q}}\big)(n^{-1}sx-l_G Rq)\\
  &=\frac{1}{\abs{S_{q}}}\sum_{n\in N_k}\big(\zeta_RU_{S_{q}}\big)(n^{-1}n_sx-l_G Rq)-\frac{1}{\abs{S_{q}}}\sum_{n\in N_k}\big(\zeta_RU_{S_{q}}\big)( n^{-1}n_s's_1x-l_G Rq)\\
  &=-\frac{1}{\abs{S_{q}}}\sum_{n\in N_k}\big(\zeta_RU_{S_{q}}\big)(n^{-1}s_1x-l_G Rq)+\frac{1}{\abs{S_{q}}}\sum_{n\in N_k}\big(\zeta_RU_{S_{q}}\big)( n^{-1}x-l_G Rq)=-U_R^{q}.
  \end{split}
\end{equation*}
By the construction, we easily see $s_1\diamond U_{R}^q=-U_{R}^q$.
We thus deduce $s\diamond U_{R}^{q}=\psi(s)U_{R}^{q}$ for any $s\in \mathcal{S}$. This concludes the claim that $U^{q}_R\in \mathcal{H}_G$.

We now prove that $Q(U_R^q)>0$ for sufficiently large $R$. To this end,  we denote $B^j_{2R}=B_{2R}(l_GR{z}_j+x^0)$ for given $x^0\in\{0\}\times\Rset^{N-k}$ and ${z}_j\in Gq$. Note that all of the balls $B^j_{2R}$ for $1\leq j\leq \abs{Gq}$ are separated from each other.
We then deduce that
\begin{equation*}
\begin{split}
\int_{\Rset^N}&(I_\alpha*F(U_{R}^{q}))F(U_{R}^{q})=A_\alpha\sum_{z_j\in Gq}\sum_{z_m\in Gq}\int_{B^j_{2R}}
\int_{B^m_{2R}}K_{jm}(x,y)\dif x\dif y\\
&=A_\alpha\sum_{z_j\in Gq}\int_{B^j_{2R}} \int_{B^j_{2R}}K_{jj}(x,y)\dif x\dif y
+A_\alpha\sum_{z_j\neq z_m\in Gq}\int_{B^j_{2R}} \int_{B^m_{2R}}K_{jm}(x,y)\dif x\dif y
\end{split}
\end{equation*}
where the function $K_{jm}:\Rset^N\times\Rset^N\to \Rset$ is defined by
$$
K_{jm}(x,y):=\frac{F\big((\zeta_R U_{R}^{q})(x-l_GRz_j)\big)F\big((\zeta_R U_{R}^{q})(y-l_GRz_m)\big)}{\abs{x-y}^{N-\alpha}}.
$$
Moreover, we observe that
$
(K_1l_G-4) R\leq \abs{x-y}\leq (K_2l_G+4)R
$
for  any $x\in B^j_{2R}$ and $y\in B^m_{2R}$ with $z_j\neq z_m$.
It then follows that
\begin{align}\label{asymptoticestimateQ}
 &\int_{\Rset^N}(I_\alpha*F(U^q_{R}))F(U^q_{R})\nonumber&\\
  &\geq {\abs{Gq}}\int_{B_{2R}} \int_{B_{2R}}A_\alpha\frac{F\big((\zeta_R U_{S_q})(x)\big) F\big((\zeta_R U_{S_q})(y)\big)}
 {\abs{x-y}^{N-\alpha}}&\\
&\quad+\sum_{z_j\neq z_m\in Gq}\frac{\C}{R^{N-\alpha}}\int_{B^j_{2R}}
F\big((\zeta_R U_{S_{q}})(x-l_GRz_j)\big)\dif x \int_{B^{m}_{2R}}F\big((\zeta_R U_{S_{q}})(y-l_GRz_m)\big)\dif y\nonumber&\\
&={\abs{Gq}}\int_{B_{2R}} \int_{B_{2R}}A_\alpha\frac{F\big((\zeta_R U_{S_{q}})(x)\big)
F\big(\zeta_R U_{S_{q}})(y)\big)}{\abs{x-y}^{N-\alpha}}\dif x\dif y
+\frac{\C}{R^{N-\alpha}}\Big(\int_{B_{2R}}F\big(\zeta_R U_{S_{q}}\big)\Big)^2.\nonumber
\end{align}
We now estimate the first term that
\begin{equation}\label{asymptoticestimateQ1}
\begin{split}
&\int_{B_{2R}}\int_{B_{2R}}A_\alpha\frac{F\big((\zeta_R U_{S_{q}})(x)\big)
F\big(\zeta_R U_{S_{q}})(y)\big)}{\abs{x-y}^{N-\alpha}}\dif x\dif y=\int_{\Rset^N}(I_\alpha*F(\zeta_RU_{S_q})F(\zeta_RU_{S_q})\\
&\geq\int_{\Rset^N}(I_\alpha*F(U_{S_q})F(U_{S_q})-2\int_{\Rset^N}\big(I_\alpha*F(U_{S_q})\big)\big(F(U_{S_q})-F(\zeta_RU_{S_q})\big).
\end{split}
\end{equation}
\resetconstant
In view of the Hardy--Littlewood--Sobolev inequality \cite[Theorem 4.3]{LL}, we deduce that
\begin{align*}
\Big|\int_{\Rset^N} (I_\alpha*F(U_{S_q})& \big(F(U_{S_q})-F(\zeta_RU_{S_q})\big)\Big| &\\
&\leq \C \Big(\int_{\Rset^N} \abs{F(U_{S_q})}^{\frac{2N}{N+\alpha}} \int_{B_R^c}\big(\abs{F(U_{S_q})}+\abs{F(\zeta_RU_{S_q})}\big)^{\frac{2N}{N+\alpha}} \Big)^{\frac{N+\alpha}{2N}}.
\end{align*}
By the assumptions $(F_1)$--$(F_2)$, we see that there exists $C>0$ such that for all $s\in \Rset$
$$\abs{F(s)}\leq C (\abs{s}^{\frac{N+\alpha}{N}}+\abs{s}^{\frac{N+\alpha}{N-2}}),$$
this implies that $\int_{\Rset^N}\abs{F(U_{S_q})}^{\frac{2N}{N+\alpha}}$ is bounded
and
\begin{equation*}
\int_{B_R^c} \abs{F(\zeta_R U_{S_q})}^{\frac{2N}{N+\alpha}}, \;\int_{B_R^c} \abs{F(U_{S_q})}^{\frac{2N}{N+\alpha}}\leq\C \int_{B_R^c}\abs{U_{S_q}}^{2}+\C\int_{B_R^c}\abs{U_{S_q}}^{\frac{2N}{N-2}}.
\end{equation*}
Noticing that $U_{S_q}$ decays exponential at infinity, we conclude that
\begin{equation}\label{exponentialdecayestimate}
2\int_{\Rset^N}(I_\alpha* F(U_{S_q}))\big(F(U_{S_q})-F(\zeta_RU_{S_q})\big)=o(\frac{1}{R^{N-\alpha}}).
\end{equation}
Similarly, by the assumptions $(F_1)$ and $(F_2)$ again we derive that
$\abs{F(t)}\leq C(\abs{t}^2+\abs{t}^\frac{N+\alpha}{N-2})$ for some positive constant $C$,
so that
\begin{equation}\label{asymptestimateQ3}
\frac{1}{R^{N-\alpha}}\Big(\int_{\Rset^N} F(U_{S_q})\Big)^2-\frac{1}{R^{N-\alpha}}\Big(\int_{\Rset^N}F(\zeta_RU_{S_q})\Big)^{2}=o(\frac{1}{R^{N-\alpha}}).
\end{equation}
By combining the above \eqref{asymptoticestimateQ}--\eqref{asymptestimateQ3}, we arrive at an asymptotic lower bound for $Q(U_R^q)$ that
\begin{align}\label{Leqdenominatorestimate}
&\int_{\Rset^N}(I_\alpha*F(U^q_R))F(U_R^q)\nonumber&\\
&\geq \frac{\abs{G}}{\abs{S_q}}\int_{\Rset^N}(I_\alpha*F(U_{S_q}))F(U_{S_q})+\frac{\C}{R^{N-\alpha}}\Big(\int_{B_R}F(\zeta_RU_{S_q})\Big)^2 +o(\frac{1}{R^{N-\alpha}})\nonumber&\\
&=\frac{\abs{G}}{\abs{S_q}}\int_{\Rset^N}(I_\alpha*F(U_{S_q}))F(U_{S_q})+\frac{\C}{R^{N-\alpha}}\Big(\int_{\Rset^N}F(U_{S_q})\Big)^2 +o\big(\frac{1}{R^{N-\alpha}}\big).
\end{align}
This completes the claim.  By Lemma \ref{pathofpohozaev}, there exist a path $\gamma_R\in C([0,+\infty);\cH_G)$ and a unique $t_R\in (0,+\infty)$ such that
 $\mathcal{P}(\gamma_R(t_R))=0$ so that $\gamma_R(t_R)\in \mathcal{P}_G$.
Direct calculations give us that
\begin{equation}\label{boundedtr}
\frac{N-2}{2}t_R^{N-2}\int_{\Rset^N}\abs{\nabla U^q_R}^2+\frac{N}{2}t_R^N \int_{\Rset^N}\abs{U_R^q}^2=\frac{N+\alpha}{2}t_R^{N+\alpha}\int_{\Rset^N}(I_\alpha*F(U^q_R))F(U^q_R).
\end{equation}
On the other hand,  we deduce by integration by parts that
\begin{equation*}
\begin{split}
\int_{\Rset^N}\abs{\nabla U^q_R}^2+\abs{U^q_R}^2&=\sum_{z_j\in Gq}\int_{B_{2R}^j}\abs{\nabla U^q_R}^2+\abs{U^q_R}^2=\frac{\abs{G}}{\abs{S_q}} \int_{B_{2R}}\abs{\nabla \big(\zeta_R U_{S_q}\big)}^2+\abs{\zeta_RU_{S_q}}^2\\
&=\frac{\abs{G}}{\abs{S_q}}\int_{\Rset^N}\zeta_R^2(\abs{\nabla U_{S_q}}^2+\abs{U_{S_q}}^2)
  -\frac{\abs{G}}{\abs{S_q}}\int_{\Rset^N}\zeta_R(\Delta \zeta_R)\abs{U_{S_q}}^2\\
&\leq \frac{\abs{G}}{\abs{S_q}}\int_{\Rset^N}\abs{\nabla U_{S_q}}^2+\abs{U_{S_q}}^2+\frac{\C}{R^2}\int_{B_{2R}\setminus B_R}\abs{U_{S_q}}^2.
\end{split}
\end{equation*}
Again, the exponential decay of the solution $U_{S_q}$ implies that
\begin{equation}\label{Leqnumeratorestimate}
\int_{\Rset^N}\big(\frac{N-2}{2}\abs{\nabla U^q_R}^2+\frac{N}{2} \abs{U^q_R}^2\big)
\leq  \abs{Gq} \int_{\Rset^N}\big(\frac{N-2}{2}\abs{\nabla U_{S_q}}^2+\frac{N}{2} \abs{U_{S_q}}^2\big) +o\big(\frac{1}{R^{N-\alpha}}\big).
\end{equation}
Taking into account of these estimates \eqref{Leqdenominatorestimate} and \eqref{Leqnumeratorestimate}, we then derive from \eqref{boundedtr}
that $t_R$ is uniformly bounded in $R$. We thus deduce an asymptotic bound that
\begin{equation*}
\begin{split}
\mathcal{E}(\gamma_R(t_R))&=\frac{t_R^{N-2}}{2}\int_{\Rset^N}\abs{\nabla U_R^q}+\frac{t_R^{N}}{2}\int_{\Rset^N}\abs{U_R^q}^2-\frac{t_R^{N+\alpha}}{2}\int_{\Rset^N}(I_\alpha*F(U_R^q))F(U_R^q)\\
&\leq \abs{Gq}  \mathcal{E}( \gamma_{U_{S_q}}(t_R))-\frac{C}{R^{N-\alpha}}+o(\frac{1}{R^{N-\alpha}})\\
&\leq\abs{Gq} \max_{t\geq 0} \mathcal{E}(\gamma_{U_{S_q}}(t))-\frac{C}{R^{N-\alpha}}+o(\frac{1}{R^{N-\alpha}})\\
&=\abs{Gq} \mathcal{E}(U_{S_q})-\frac{C}{R^{N-\alpha}}+o(\frac{1}{R^{N-\alpha}}).
\end{split}
\end{equation*}
The desired conclusion $c_G<{\abs{Gq}}c_{S_q}$ follows provided that $R$ is sufficiently large.
We note that the Proposition \ref{energycharacterization} is used in the final equality that $U_{S_q}$ achieves $p_G$. We also mention the Lagrange's theorem on the finite group that $\abs{G}=\abs{Gq}\abs{S_q}.$
\end{proof}

\section{Proof of Theorems}
\label{section5}
\resetconstant
We now prove that, up to translations, the Pohozaev--Palais--Smale sequence constructed in Proposition \ref{LpropPSsequence} has a nontrivial weak limit below
the energy level $c_{G}$.
\begin{lemma}\label{procompactness}
Let $N\geq 2$, $\alpha\in(0,N)$ and $G\in \mathcal{G}_k$ with $1\leq k\leq N$. Assume that the sequence
$(u_n)_{n\in\Nset}\subset \mathcal{H}_G$ satisfies as $n\to\infty$,
$$
\mathcal{E}(u_n)\to c_{G}, \quad \mathcal{E}'(u_n)\to 0 \quad\text{\rm{ strongly in } }\mathcal{H}_G^* \quad \text{ \rm{and} } \mathcal{P}(u_n)\to 0.
$$
Then, there exists a sequence of points $(a_n)_{n\in\Nset}\subset\{0\}\times\mathbb{R}^{N-k}\subset\Rset^N$ such that, up to a subsequence,
$(u_n(\cdot-a_n))_{n\in\Nset}$ converges weakly to $u\in \mathcal{H}_G$.
Moreover,
$$
0<\mathcal{E}(u)\leq c_{G}\qquad\text{\rm{and}}\qquad \mathcal{E}'(u)=0 \quad\text{\rm{in}}\quad \mathcal{H}_G^*.
$$
\end{lemma}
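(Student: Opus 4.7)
My plan is to combine the standard $H^1$-boundedness of Pohozaev--Palais--Smale sequences with a Lions-type non-vanishing step, and then use the $G$-symmetry together with the strict energy inequality of Proposition~\ref{energystrictinequality} to force the noncompact direction to lie in the pointwise-fixed subspace $\{0\}\times\Rset^{N-k}$.

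First I would establish $H^1$-boundedness via the linear combination
$$\mathcal{E}(u_n) - \tfrac{1}{N+\alpha}\mathcal{P}(u_n) = \tfrac{\alpha+2}{2(N+\alpha)}A(u_n) + \tfrac{\alpha}{2(N+\alpha)}B(u_n)$$
(with an analogous identity when $N=2$), whose left-hand side tends to $c_G$. Next, if $\limsup_n\sup_{b\in\Rset^N}\int_{B_1(b)}|u_n|^2=0$, then Lions' lemma gives $u_n\to 0$ in every subcritical $L^s(\Rset^N)$; combined with $(F_1)$--$(F_2)$ and the Hardy--Littlewood--Sobolev inequality, this yields $Q(u_n)\to 0$, which together with $\mathcal{P}(u_n)\to 0$ forces $A(u_n)+B(u_n)\to 0$, hence $\mathcal{E}(u_n)\to 0$, contradicting $c_G>0$. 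So there exist $\delta>0$ and $b_n\in\Rset^N$ with $\int_{B_1(b_n)}|u_n|^2\geq\delta$.

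The decisive step is to show that the orthogonal component $b_n^\perp$ in the decomposition $b_n = b_n^\parallel + b_n^\perp$ (with $b_n^\parallel\in\{0\}\times\Rset^{N-k}$ and $b_n^\perp\in\Rset^k\times\{0\}$) stays bounded. Suppose not; after extracting a subsequence we may assume $|b_n^\perp|\to\infty$, that the stabilizer of $b_n^\perp$ in $G$ is a fixed reflection subgroup $H$, and that $b_n^\perp/|b_n^\perp|$ converges to some $q\in \overline{\mathcal{F}}\cap\partial B_1$ with $S_q\supseteq H$. The $G$-symmetry of $u_n$ then replicates the mass $\delta$ around every point of the orbit $Gb_n$, which has cardinality $|G/H|$ and whose pairwise distances diverge. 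Picking coset representatives, the translates $u_n(\cdot+gb_n)$ admit weak limits $v_g$ solving \eqref{eqChoquard} with at least $H$-symmetry, so by the inductive hypothesis in Proposition~\ref{energycharacterization} each satisfies $\mathcal{E}(v_g)\geq c_H$. A partition-of-unity splitting of $\mathcal{E}$, whose cross-terms are negligible thanks to the diverging distances and the exponential decay from Theorem~\ref{thm1.2}, then yields
$$c_G=\lim_n\mathcal{E}(u_n)\;\geq\; |G/H|\,c_H\;\geq\; c_G^*,$$
contradicting Proposition~\ref{energystrictinequality}. Hence $(b_n^\perp)$ is bounded.

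With this in hand, set $a_n:=b_n^\parallel\in\{0\}\times\Rset^{N-k}$; since $a_n$ is pointwise fixed by $G$, translation by $-a_n$ commutes with the $G$-action, so $\tilde u_n(x):=u_n(x-a_n)\in\mathcal{H}_G$. Boundedness of $b_n^\perp$ gives a fixed $R>0$ with $\int_{B_R}|\tilde u_n|^2\geq\delta$, so along a subsequence $\tilde u_n\wto u$ weakly in $H^1(\Rset^N)$ with $u\in\mathcal{H}_G\setminus\{0\}$. Translation invariance of $\mathcal{E}'$ together with $L^s_{\mathrm{loc}}$-compactness applied to the nonlocal term (via Hardy--Littlewood--Sobolev) yields $\mathcal{E}'(u)=0$; Theorem~\ref{thm1.2} and Proposition~\ref{proppohozaevidentity} then give $\mathcal{P}(u)=0$, whence
$$\mathcal{E}(u)=\tfrac{\alpha+2}{2(N+\alpha)}A(u)+\tfrac{\alpha}{2(N+\alpha)}B(u)>0,$$
and weak lower semicontinuity of $A$ and $B$ applied to the same identity for $\tilde u_n$ gives $\mathcal{E}(u)\leq c_G$. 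The main obstacle is precisely the second step above: ruling out mass escaping along a nontrivial $G$-orbit, for which Proposition~\ref{energystrictinequality} and the exponential decay of Theorem~\ref{thm1.2} are both essential.
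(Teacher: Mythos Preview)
Your high-level strategy is right and matches the paper's: boundedness from $\mathcal{E}-\frac{1}{N+\alpha}\mathcal{P}$, non-vanishing via Lions, and a contradiction with Proposition~\ref{energystrictinequality} if mass escapes along a nontrivial $G$-orbit. However, two steps in your sketch do not go through as written.

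\textbf{Misuse of Theorem~\ref{thm1.2}.} You invoke the exponential decay of Theorem~\ref{thm1.2} to kill the cross-terms in your partition-of-unity splitting of $\mathcal{E}(u_n)$. But Theorem~\ref{thm1.2} applies to \emph{solutions}, not to elements of a Palais--Smale sequence; the $u_n$ have no pointwise decay whatsoever. If instead you mean to use the decay of the weak-limit profiles $v_g$, you would first need a genuine profile decomposition showing that $u_n-\sum_g v_g(\cdot-gb_n)$ is negligible in a sense strong enough to transfer the energy, and you have not provided this. The paper avoids the issue entirely: it cuts $u_n$ directly with smooth functions supported on the orbit balls, and controls the nonlocal cross-terms by exploiting only the algebraic decay of the Riesz kernel, replacing $I_\alpha$ by $I_\beta$ with $\beta\in(\alpha,N)$ on pairs of balls at distance $\gtrsim r_n$ to gain a factor $r_n^{\alpha-\beta}\to 0$.

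\textbf{Orbit separation.} Your claim that the $|G/H|$ points of $Gb_n$ have pairwise diverging distances can fail precisely in the case you flag, $S_q\supsetneq H$. If $s\in S_q\setminus H$ then $|sb_n^\perp-b_n^\perp|=|b_n^\perp|\cdot|s\hat b_n-\hat b_n|$ with $\hat b_n\to q$ and $s\hat b_n-\hat b_n\to 0$, an indeterminate product; these two orbit points need not separate. Consequently you cannot extract $|G/H|$ independent profiles, only $|G/S_q|$, each carrying $S_q$-symmetry. The paper handles this by a case analysis on which facet $\partial^i\mathcal{F}$ the concentration point approaches and, in the dichotomy case, by taking $y_n=\nu_n q$ on the facet so that the relevant isotropy is exactly $S_q$.

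Finally, your sketch omits the remainder. The paper's decomposition $u_n=\sum_j v_n^j+w_n$ requires a separate and nontrivial argument that $\mathcal{P}(w_n)\to 0$ (ruling out both signs by projecting onto the Pohozaev manifold and invoking $c_G<c_G^*$), without which one cannot conclude $\liminf_n\mathcal{E}(w_n)\ge 0$. In short, the paper does not take weak limits at all in this lemma: it runs Lions' compactness/dichotomy alternative on $\int|u_n|^{2Np/(N+\alpha)}$, and in the dichotomy branch it rescales the cut-off pieces onto $\mathcal{P}_{S_q}$ to produce the lower bound $|Gq|\,c_{S_q}$ directly.
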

\begin{proof}
The sequence $(u_n)_{n\in\Nset}$ is bounded in $H^1(\Rset^N)$ because we have
\begin{equation*}
 (\alpha+2) A(u_n)+\alpha B(u_n)= 2(N+\alpha)\mathcal{E}(u_n)-2\mathcal{P}(u_n)+o(1).
\end{equation*}
 We claim that there exists $R>0$ such that for some $p\in (\frac{N+\alpha}{N},\frac{N+\alpha}{N-2})$
\begin{equation}\label{mainclaim}
\liminf_{n\to\infty}\int_{O_R}\abs{u_n}^{\frac{2Np}{N+\alpha}}>0.
\end{equation}
Here $O_R:=[-R,R]^k\times\Rset^{N-k}$. We acknowledge this claim \eqref{mainclaim} temporarily and first finish the proof of the lemma.
Taking a function $\varphi\in C^\infty_c(\Rset^N)$ such that $\varphi=1$ on $O_R$, $\supp \varphi \subset  O_{3R/2}$, $0\leq \varphi\leq 1$ on $\Rset^N$ and
$\nabla \varphi \in L^\infty(\Rset^N)$, we then deduce from \cite[Lemma 1.1]{Lions1984} (see also  \cite[Lemma 2.3]{MVJFA} or \cite[ Lemma 1.21]{Willem1996})
that there exists a constant $K_G>0$ such that
\begin{equation*}
\begin{split}
\int_{O_R}\abs{u_n}^{\frac{2Np}{N+\alpha}}
&\leq \int_{\Rset^N}\abs{\varphi u_n}^{\frac{2Np}{N+\alpha}}\\
&\leq \C \Big(\sup_{a\in \Rset^N}\int_{B_{R/2}(a)}\abs{\varphi u_n}^{\frac{2Np}{N+\alpha}}\Big)^{1-\frac{N+\alpha}{Np}}\int_{\Rset^N}
\abs{\nabla(\varphi u_n)}^2+\abs{\varphi u_n}^2\\
&\leq \C \Big(\sup_{a\in \{0\}\times\Rset^{N-k}}\int_{B_{K_GR}(a)}\abs{u_n}^{\frac{2Np}{N+\alpha}}
\Big)^{1-\frac{N+\alpha}{Np}}\int_{\Rset^N}\abs{\nabla u_n}^2+\abs{u_n}^2.
\end{split}
\end{equation*}
Hence, there exists a sequence of points $(a_n)_{n\in\Nset}\subset\{0\}\times\Rset^{N-k}$ such that
$$
\liminf_{n\to\infty}\int_{B_{K_GR}(a_n)}\abs{u_n}^{\frac{2Np}{N+\alpha}}>0.
$$
We then conclude by a similar argument as in \cite[Proposition 2.2]{MVTAMS} that, up to translations and a subsequence, $(u_n)_{n\in\Nset}$ converges weakly to some function $u\in \mathcal{H}_G\setminus\{0\}$ and $\mathcal{E}'(u)=0$ in $\mathcal{H}_G^*$. We thus conclude that $\mathcal{P}(u)=0$.
By the weakly lower semicontinuity of the norm, we are left that $\cE(u)\leq c_G$. In fact, we have
\begin{equation*}
\begin{split}
c_{G}
&=\lim_{n\to\infty} \Big(\mathcal{E}(u_n)-\frac{1}{N+\alpha}\mathcal{P}(u_n)\Big)\\
&=\lim_{n\to\infty} \frac{\alpha}{2(N+\alpha)} \int_{\Rset^N}\abs{\nabla u_n}^2+\frac{\alpha}{2(N+\alpha)}\int_{\Rset^N}\abs{u_n}^2\\
&\geq \frac{\alpha}{2(N+\alpha)}\int_{\Rset^N}\abs{\nabla u}^2+\abs{u}^2=\mathcal{E}(u)-\frac{1}{N+\alpha}\mathcal{P}(u)=\mathcal{E}(u).
\end{split}
\end{equation*}

We now return to finish the proof of the claim \eqref{mainclaim}. Up to a subsequence, there exists $\Lambda\in(0,+\infty)$ such that
$$
\lim_{n\to\infty}\int_{\Rset^N} \abs{u_n}^{\frac{2Np}{N+\alpha}}=\Lambda.
$$
Note that the sequence $(u_n)_{n\in\Nset}$ is bounded in $H^1(\Rset^N)$. The Rellich--Kondrachov embedding theorem guarantees the finiteness of $\Lambda$.
We suppose that $\Lambda=0$. By $(F_1)$ and $(F_2)$, we deduce that for any $\varepsilon>0$, there exists $C_\varepsilon$ such that
\begin{equation*}\label{assumptionsof1.1}
\abs{F(u)}\leq \varepsilon (\abs{u}^{\frac{N+\alpha}{N}}+ \abs{u}^{\frac{N+\alpha}{N-2}}) +C_\varepsilon \abs{u}^p.
\end{equation*}
By the Hardy--Littlewood--Sobolev inequality and by the Rellich--Kondrachov embedding theorem, we derive that
$$ \lim_{n\to\infty}\int_{\Rset^N}(I_\alpha*F(u_n))F(u_n)=0.$$
Inserting this into
$\lim_{n\to\infty}\cP(u_n)=0$, we deduce $\lim_{n\to\infty}\|u_n\|= 0$, then a contradiction follows that
$
c_{G}=\lim_{n\to\infty}\mathcal{E}(u_n)=0.
$

Since $G=\langle\cS\rangle$, in the subsequent parts of this paper, we
assume each of the hyperplane fixed by the reflection $s\in \cS$ passing through the origin and $\cF$
always refer to the  strict fundamental domain under the group action $G\times 1_{N-k}$, see Appendix. Without loss of generality, we can fix $\cF$ to be the infinite cone formed by $H_s$ for all $s\in \cS$. Let $\partial^i \mathcal{F}$ denote its $k-i$ dimensional facets of the polyhedral cone $\mathcal{F}$ for $i=0,1,\cdots,k-1$, here $\partial^{0}\mathcal{F}$ refers to the interior of $\mathcal{F}$ for convenience.
We claim that for any $r>0$,  $$\liminf_{n\to\infty}\sup_{y\in\Rset^N}\int_{B_r(y)}\abs{u_n}^{\frac{2Np}{N+\alpha}}>0.$$
Otherwise, by Lions' Lemma (e.g., \cite{Willem1996}) the sequence $(u_n)_{n\in\Nset}\subset H^1(\Rset^N)$ converges strongly to zero in $L^{\frac{2Np}{N+\alpha}}(\Rset^N)$ as $n\to\infty$. Then a similar argument as above implies $c_G=0$ again. In other words, the sequence $(u_n)_{n\in\Nset}$ is non-vanishing.  According to the P. -L. Lions' concentration-compactness principle \cite{Lions1984}, up to a subsequence, there are still two possibilities: compactness and dichotomy.\\
\textbf{Compactness}: there exists $(x_n)_{n\in\Nset}\subset \Rset^N$ such that
\begin{itemize}
\item [] $$ \forall \varepsilon>0, \exists R>0 \text{ such that }  \liminf_{n\to\infty}\int_{B_R(x_n)}\abs{u_n}^{\frac{2Np}{N+\alpha}}\geq \Lambda-\varepsilon.$$
\end{itemize}
Without loss generality, we assume that $(x_n)_{n\in\Nset}\subset \cF$. We then easily deduce that there exists $M>0$ such that
$
\sum_{j=1}^k\abs{x_n^j}\leq M
$
for all $x_n=(x_n^1,x_n^2,\cdots, x_n^N)$.
Otherwise, for largely $n$, there exists some $\mathcal{R}\in G$ such that
$
B_{R}(x_n)\cap \mathcal{R}(B_{R}(x_n))=\emptyset.
$
In fact, if $x_n\in \partial^0\cF$, we can choose any $\mathcal{R}\in \mathcal{S}$; if $x_n\in \partial^i \cF\subset \{\cap H_s \st s\in \mathcal{S}_i\subset \mathcal{S}\}\times\Rset^{N-k}$, then we take a reflection $\mathcal{R}\in \mathcal{S}\setminus\mathcal{S}_i$.
It then follows that
$$
\liminf_{n\to\infty}\int_{(B_{R}(\mathcal{R} x_n))}\abs{u_n}^{\frac{2Np}{N+\alpha}}=\liminf_{n\to\infty}\int_{B_{R}(x_n)}\abs{u_n}^{\frac{2Np}{N+\alpha}}\geq \Lambda-\varepsilon.
$$
This would lead to a contradiction once $\varepsilon$ is sufficiently small since
$$
\liminf_{n\to\infty}\int_{\Rset^N}\abs{u_n}^{\frac{2Np}{N+\alpha}}
\geq \liminf_{n\to\infty}\int_{B_{R}(x_n)}\abs{u_n}^{\frac{2Np}{N+\alpha}}+\liminf_{n\to\infty}\int_{\mathcal{R}(B_{R}(x_n))}\abs{u_n}^{\frac{4N}{N+\alpha}}
\geq 2\Lambda-2\varepsilon.
$$
\textbf{Dichotomy}: there exists  $(y_n)_{n\in\Nset}\subset \Rset^N$ and $\delta\in(0,1)$ such that $\forall \varepsilon>0$, there exists $R_0>0$ such that for any $r,r'\geq R_0$, there holds
$$
\limsup_{n\to\infty}\bigg|\int_{B_r(y_n)}\abs{u_n}^{\frac{2Np}{N+\alpha}}-\delta\Lambda\bigg|
+\limsup_{n\to\infty}\bigg|\int_{B^c_{r'}(y_n)}\abs{u_n}^{\frac{2Np}{N+\alpha}}-(1-\delta)\Lambda\bigg|\leq \varepsilon.
$$
For this situation, we also assume by the symmetry that $(y_n)_{n\in\Nset}\subset \cF$.

\noindent \textbf{Case 1}: If there exists $R_1>0$ such that
$
(y_n)_{n\in\Nset}\subset B_{R_1}\cap \cF
$,
we then conclude the claim \eqref{mainclaim} straight away since $\delta\Lambda>0$.

For the subsequent proofs, we always assume the cut-off function $\xi\in[0,1]$
satisfies $\xi=0$ for $s\leq 1$ or $s\geq 4$, $\xi(s)=1$ for $2\leq s\leq 3$ and $\abs{\xi'(s)}\leq 2$; the cut-off function $\eta\in[0,1]$
satisfies that $\eta(t)=1$ for $t\in[0,2]$, $\eta(t)=0$ whenever $t\geq 3 $ and $\abs{\eta'(t)}\leq 2$.
Suppose that the claim \eqref{mainclaim} is not true, by a diagonal process and up to a subsequence we may
choose $\varepsilon_n\to 0$, $r_n\to+\infty $ as $n\to\infty$ and $r_n'=4r_n$ such that
$$\bigg|\int_{B_{r_n}(y_n)}\abs{u_n}^{\frac{2Np}{N+\alpha}}-\delta\Lambda\bigg|
+\bigg|\int_{B^c_{r'_n}(y_n)}\abs{u_n}^{\frac{2Np}{N+\alpha}}-(1-\delta)\Lambda\bigg|\leq \varepsilon_n.
$$
The subsequent proofs will be completed by splitting into several cases according to the asymptotic distance between the sequence $(y_n)_{n\in\Nset}$ and the boundary $\partial\cF$.\\
\textbf{Case 2}: up to a subsequence, there exists some $i\in\{1,\cdots,k-1\}$ such that $$\lim_{n\to\infty}\dist(y_n, \partial^i \cF)=0 \text{ and }
\lim_{n\to\infty}\dist(y_n, \partial^\ell \cF)=0 \text{ for all } \ell>i.$$

In this situation, we fix a unit vector $q\in \partial^i\cF\cap \partial B_1$ and, up to a subsequence, we may assume that $y_n=\nu_nq$ with $\nu_n\to+\infty$ for each subcases such that
$\abs{x-y}\geq 4r_n$ for $x\in B_{3r_n}(z_1)$ and $y\in B_{3r_n}(z_2)$ where $z_1\neq z_2\in Gy_n$. We define a test function such that
$$
\phi_n^{q}(x)=\sum_{z\in Gy_n}\xi(\abs{x-z}/r_n)u_n(x)=\frac{1}{\abs{S_q}}\sum_{g\in G}\xi(\abs{x-gy_n}/r_n)u_n(x).
$$
It then follows that, $g\diamond \phi_n^q=\psi(g)\phi_n^q$ for any $g\in G$ so that $\phi_n^q\in \mathcal{H}_G$.
Testing the limiting equality that  $\mathcal{E}'(u_n)\to 0$ strongly in $\mathcal{H}_G^*$ as $n\to\infty$ against the function $\phi_n^{q}$, we reach that,
$$
\int_{\Rset^N}\nabla u_n\nabla \phi_n^{q}+u_n\phi_n^{q}=\int_{\Rset^N}(I_\alpha*F(u_n))f(u_n)\phi_n^{q}+o(1).
$$
Taking into  account the growth assumption $(F_1)$ and $(F_2)$ and combining the boundedness of the sequence $(u_n)_{n\in\Nset}\subset\mathcal{H}_G$, the Hardy--Littlewood--Sobolev inequality and the fact that
\begin{equation}\label{outerinvanishing}
\int_{B_{4r_n}(y_n)\setminus B_{r_n}(y_n)}\abs{u_n}^{\frac{2Np}{N+\alpha}}\leq 2\varepsilon_n,
\end{equation}
we then conclude that
\begin{equation}\label{outerinvanishingnorm}
\int_{B_{3r_n}(y_n)\setminus B_{2r_n}(y_n)}\abs{\nabla u_n}^2+\abs{u_n}^2=o(1).
\end{equation}

Define for each $z_j\in G{y_n}$ that
$$
v_n^j(x)=\eta(\abs{x-z_j}/r_n)u_n(x) \; \text{ and }\; w_n=u_n-\sum v_n^j.
$$
We clearly see that $\supp v_n^j\subset B_{3r_n}(z_j)$  for every $1\leq j\leq \abs{G{q}}$. On the other hand,
Lemma \ref{lemmainduction} states that the isotropic group $S_q\in \mathcal{G}_i$ and $S_q=\langle\cS_i\rangle$ for some $\cS_i\subset \cS$ with $\abs{\cS_i}=i$, which leads to $sq=q$ for $s\in \cS_i$.
Let $z_j=g_jy_n$ for some $g_j\in G$, we then deduce for every $s\in \cS_i$,
$$g_jsg_j^{-1}\diamond v_n^j =\eta(\abs{g_jsg_j^{-1}x-z_j}/r_n)u_n(g_jsg_j^{-1}x)=\psi(g_jsg_j^{-1})v_n^j=\psi(s) v_n^j.$$
This means that $v_n^j\in \mathcal{H}_{S_{q}}$ for all $1\leq j\leq \abs{Gq}$ since the group $g_jS_{q}g_j^{-1}$ is isomorphic to $S_q$  for every $j$.
For simplicity, we denote $v_n=\sum_j v_n^j$, so that
$$v_n=\sum_{z_j\in G{\nu_n q}} \eta(\abs{x-z_j}/r_n)u_n=\frac{1}{\abs{S_q}}\sum_{g\in G}\eta(\abs{x-g\nu_nq}/r_n)u_n.$$
We thus have $v_n\in \mathcal{H}_G$ since $g\diamond v_n=\psi(g)v_n$ for any $g\in G$.
It follows that $w_n=u_n-v_n\in \mathcal{H}_G$.
We denote $B_r^j=B_r(z_j)$ with $r>0$ for $z_j\in G{\nu_nq}$, and take $\mathcal{B}_r =\cup_{z_j} B_r^j$ for simplicity.
 By symmetry and by the choice of the cut-off function, we see from \eqref{outerinvanishingnorm} that
\begin{align}
\int_{\Rset^N}&\nabla v_n\nabla w_n+v_nw_n=\sum_{z_j\in G{\nu_nq}}\int_{B_{3r_n}(z_j)\setminus B_{2r_n}(z_j) }\nabla{v_n^j}\nabla w_n+v_n^jw_n&\nonumber\\
&\leq \frac{\C}{r_n}\int_{\Rset^N}\abs{\nabla u_n}^2+\abs{u_n}^2+\C\int_{B_{3r_n}(y_n)\setminus B_{2r_n}(y_n)}\abs{\nabla u_n}^2+\abs{u_n}^2=o(1).\label{gradientterm}
\end{align}
Note that $\abs{x-y}\geq 4r_n$ for any $x\in \supp v_n^j$, $y\in \supp v_n^m$ provided that $j\neq m$.
Taking $\beta\in(\alpha, N)$ such that $ \beta< N\frac{2+\alpha}{N-2}$, we then deduce that for $1\leq j\neq m\leq {\abs{Gq}}$,
\begin{align}
\int_{\Rset^N}(I_\alpha*F(v^j_n))F(v^m_n)
&\leq \frac{\C}{r_n^{\beta-\alpha}}\int_{\Rset^N}(I_\beta*F(u_n))F(u_n)&\nonumber\\
&\leq  \frac{\C}{r_n^{\beta-\alpha}}\Big(\int_{\Rset^N}\abs{F(u_n)}^{\frac{2N}{N+\beta}}\Big)^{\frac{N+\beta}{N}}=o(1). \label{mixtermconvolution}
\end{align}
This is a consequence of Rellich--Kondrachov embedding theorem. In fact, our choice of $\beta$ ensures the exponent $\frac{2N}{N+\beta}\frac{N+\alpha}{N-2}\in(2,\frac{2N}{N-2})$,  we then deduce by $(F_1)$ and $(F_2)$ that
$$\int_{\Rset^N}\abs{F(u_n)}^{\frac{2N}{N+\beta}}\leq \C \int_{\Rset^N} \abs{u_n}^2+\abs{u_n}^{\frac{2N}{N+\beta}\frac{N+\alpha}{N-2}}
\leq \C \big(\|u_n\|^2+\|u_n\|^{\frac{2N}{N+\beta}\frac{N+\alpha}{N-2}}\big).$$
Here we recall that $\frac{N+\alpha}{N-2}=\bar{p}$ and $\frac{2N}{N-2}=\infty$ for $N=2$.
\resetconstant

We shall prove the following two decompositions on the functionals:
\begin{equation}\label{energydecomposition}
\mathcal{E}(u_n)=\sum_{1\leq j\leq \abs{Gq}}\mathcal{E}(v_n^j)+\mathcal{E}(w_n)+o(1),
\end{equation}
\begin{equation}\label{pohozaevdecomposition}
\mathcal{P}(u_n)=\sum_{1\leq j\leq \abs{Gq}}\mathcal{P}(v_n^j)+\mathcal{P}(w_n)+o(1).
\end{equation}
For brevity, we set
$$
J_n(x,y)=A_\alpha\bigg(\frac{F(u_n(x))F(u_n(y))}{\abs{x-y}^{N-\alpha}}-\frac{F(v_n(x))F(v_n(y))}{\abs{x-y}^{N-\alpha}}
-\frac{F(w_n(x))F(w_n(y))}{\abs{x-y}^{N-\alpha}}\bigg).
$$
By direct calculations, we expand $\cE(u_n)$ in terms of $\cE(v_n^j)$ and $\cE(w_n)$ that
\begin{equation*}
\begin{split}
&\mathcal{E}(u_n)=\mathcal{E}(v_n)+\mathcal{E}(w_n)+\int_{\Rset^N}\nabla v_n\nabla w_n+v_nw_n -\frac{1}{2} \int_{\Rset^N}\int_{\Rset^N}J_n(x,y)\dif x\dif y\\
&\quad\;\quad\;=\sum_{1\leq j\leq \abs{Gq}}\mathcal{E}(v^j_n)+\mathcal{E}(w_n)-\frac{1}{2}\sum_{1\leq j\neq m\leq  \abs{Gq}}\int_{\Rset^N}(I_\alpha*F(v^j_n))F(v^m_n)\\
 &\quad\;\quad\qquad  +\int_{\Rset^N}\nabla v_n\nabla w_n+v_nw_n -\frac{1}{2}\int_{\Rset^N}\int_{\Rset^N}J_n(x,y)\dif x\dif y
\end{split}
\end{equation*}
Similarly, we can also obtain the following  expansion,
\begin{alignat}{4}
&\mathcal{P}(u_n)
=\sum_{1\leq j\leq \abs{Gq}}\mathcal{P}(v^j_n)+\mathcal{P}(w_n)-\frac{N+\alpha}{2}\sum_{ j\neq m}\int_{\Rset^N}(I_\alpha*F(v^j_n))F(v^m_n)\nonumber\\
 &\qquad\;\qquad\;  +(N-2)\int_{\Rset^N}\nabla v_n\nabla w_n+N\int_{\Rset^N}v_nw_n -\frac{N+\alpha}{2}\int_{\Rset^N}\int_{\Rset^N}J_n(x,y)\dif x\dif y\nonumber.
\end{alignat}
In view of the estimates \eqref{gradientterm} and \eqref{mixtermconvolution},  the conclusions  \eqref{energydecomposition} and \eqref{pohozaevdecomposition} follows once we  prove
\begin{equation}\label{mixterm}
\int_{\Rset^N} \int_{\Rset^N}J_n(x,y)\dif x\dif y=o(1).
\end{equation}
On one hand, similar manipulations as in \eqref{mixtermconvolution}   bring us that
\begin{equation*}
\begin{split}
 \int_{\mathcal{B}_{2r_n}}\int_{\mathcal{B}^c_{3r_n}} \frac{F(\varphi_n(x))F(\varphi_n(y))}{\abs{x-y}^{N-\alpha}}\dif x\dif y
 &\leq \frac{\C}{r_n^{\beta-\alpha}}\Big(\int_{\Rset^N}\abs{F(\varphi_n)}^{\frac{2N}{N+\beta}}\Big)^{\frac{N+\beta}{N}}\\
 &\leq \frac{\C}{r_n^{\beta-\alpha}} \big(\|u_n\|^2+\|u_n\|^{\frac{2N}{N+\beta}\frac{N+\alpha}{N-2}}\big)=o(1).
 \end{split}
\end{equation*}
Here $\varphi_n$ can be any sequence among $u_n$, $v_n$ and $w_n$. We thus conclude that $$\int_{\mathcal{B}_{2r_n}}\int_{\mathcal{B}^c_{3r_n}} J_n(x,y)\dif x\dif y=o(1).$$
On the other hand, by the Hardy--Littlewood--Sobolev inequality and by the Rellich--Kondrachov embedding theorem, we infer from \eqref{outerinvanishing} and \eqref{outerinvanishingnorm} that for any $\varphi_n$ amongst $u_n$, $v_n$ and $w_n$
\begin{equation*}
\begin{split}
&\int_{\mathcal{B}_{3r_n}\setminus\mathcal{B}_{2r_n}} \int_{\Rset^N}\frac{F(\varphi_n(x))F(\varphi_n(y))}{\abs{x-y}^{N-\alpha}}\dif x\dif y\\
&\leq \C\bigg(\int_{\Rset^N}\abs{F(\varphi_n)}^{\frac{2N}{N+\alpha}}\bigg)^{\frac{N+\alpha}{2N}}\bigg(\int_{B_{3r_n}(y_n)\setminus B_{2r_n}(y_n)}\abs{F(\varphi_n)}^{\frac{2N}{N+\alpha}}\bigg)^{\frac{N+\alpha}{2N}}\\
&\leq  \C\Big(\|u_n\|^{2}+\|u_n\|^{\frac{N+\alpha}{N-2}\frac{2N}{N+\alpha}}\Big)^{\frac{N+\alpha}{2N}} \\
&\qquad \times \bigg(\int_{B_{3r_n}(y_n)\setminus B_{2r_n}(y_n)}  \abs{u_n}^{\frac{N+\alpha}{N-2}\frac{2N}{N+\alpha}}+  \abs{u_n}^2+C_\varepsilon  \abs{u_n}^{\frac{2Np}{N+\alpha}}\bigg)^{\frac{N+\alpha}{2N}} =o(1).
\end{split}
\end{equation*}
We then obtain
$$\int_{\mathcal{B}_{3r_n}\setminus\mathcal{B}_{2r_n}} \int_{\Rset^N} J_n(x,y)\dif x\dif y=o(1).$$
The very similar analyses as above also indicate that
\begin{equation*}
 \int_{\mathcal{B}^c_{3r_n}}\int_{\mathcal{B}_{2r_n}} J_n(x,y)\dif x\dif y=o(1)\;\; \text{ and }\;\; \int_{\Rset^N}\int_{\mathcal{B}_{3r_n}\setminus\mathcal{B}_{2r_n}}J_n(x,y)\dif x\dif y=o(1).
\end{equation*}
We are now ready to complete the final estimate on the convolution term \eqref{mixterm} that,
\begin{equation*}\label{mixedterm}
\begin{split}
\int_{\Rset^N}\int_{\Rset^N}&J_n(x,y)\dif x\dif y
=\int_{\mathcal{B}_{2r_n}\cup (\mathcal{B}_{3r_n}\setminus \mathcal{B}_{2r_n})\cup  \mathcal{B}^{c}_{3r_n}}
\int_{\mathcal{B}_{2r_n}\cup (\mathcal{B}_{3r_n}\setminus \mathcal{B}_{2r_n})\cup \mathcal{B}^{c}_{3r_n}}J_n(x,y)\dif x\dif y\\
&\leq \int_{\Rset^N}\int_{\mathcal{B}_{3r_n}\setminus\mathcal{B}_{2r_n}}J_n(x,y)\dif x\dif y
+\int_{\mathcal{B}_{2r_n}}\int_{\mathcal{B}^{c}_{3r_n}}J_n(x,y)\dif x\dif y\\
&\quad  +\int_{\mathcal{B}_{3r_n}\setminus\mathcal{B}_{2r_n}}\int_{\Rset^N}J_n(x,y)\dif x\dif y
+\int_{\mathcal{B}^{c}_{3r_n}}\int_{\mathcal{B}_{2r_n}}J_n(x,y)\dif x\dif y=o(1).
\end{split}
\end{equation*}

By the Rellich--Kondrachov embedding theorem and the fact that
$$\liminf_{n\to\infty}\int_{\Rset^N}\abs{v^j_n}^{\frac{2Np}{N+\alpha}}
\geq\liminf_{n\to\infty}\int_{B^j_{r_n}}\abs{u_n}^{\frac{2Np}{N+\alpha}}\geq \delta \Lambda >0,$$
for each $1\leq j\leq \abs{Gq}$, we then deduce that
 $\liminf_{n\to\infty}\|v_n^j\|>0$. Note that $$\mathcal{P}(u_n)=\sum\cP(v_n^j)+\cP(w_n)=o(1).$$
 We claim that $\cP(w_n)=o(1)$, so that $\cP(v_n^j)=o(1)$ for every  $1\leq j\leq \abs{Gq}$, which bring us that $\lim_{n\to\infty}Q(v^j_n)>0$ for each $j$. By Lemma \ref{pathofpohozaev}, there exists $s_n^j\in (0,+\infty)$ such that $ v_n^j(\frac{\cdot}{s_n^j})\in \mathcal{P}_{S_q}$. In particular, $\cP(v_n^j)=o(1)$ implies
$\lim_{n\to\infty} s_n^j=1$.
On the other hand,
$$\liminf_{n\to\infty}\cE(w_n)=\liminf_{n\to\infty}\cE(w_n)-\frac{1}{N+\alpha}\lim_{n\to\infty}\cP(w_n)\geq 0.$$
We thus deduce
\begin{align*}
c_G=\lim_{n\to\infty}\sum\mathcal{E}(v_n^j)+\lim_{n\to\infty}\mathcal{E}(w_n)\geq\lim_{n\to\infty}
\sum\mathcal{E}(v_n^j(\frac{\cdot}{s_n^j}))\geq   \abs{Gq}c_{S_q} \geq c_{G}^*.
\end{align*}
This contradicts with the proposition \ref{energystrictinequality} that $c_G<c_{G}^*$.

We now set out to complete the claim that $\cP(w_n)=o(1).$ In fact, since the sequence $(u_n)_{n\in\Nset}$ is bounded, we then easily derive the boundedness of $\cP(v^j_n)$ and $\cP(w_n)$. Up to a subsequence, we assume both  $\cP(w_n)$ and  $\cP(w_n)$ take finite limit values. If $\lim_{n\to\infty}\cP(w_n)>0$, then $\lim_{n\to\infty}\cP(v_n^j)<0$. By Lemma \ref{pathofpohozaev}, there exist $t_n\in(0,1]$ such that $v_n^j(\frac{\cdot}{t_n})\in \cP_{S_q}$. Namely,
$$\frac{t_n^{N-2}}{2}\int_{\Rset^N}\abs{\nabla v^j_n}^2+\frac{t_n^N}{2}\int_{\Rset^N}\abs{v^j_n}^2=\frac{t_n^{N+\alpha}}{2}\int_{\Rset^N}(I_\alpha*F(v^j_n))F(v^j_n).$$
In view of the fact that $\lim_{n\to\infty}\cP(v_n^j)<0$ and the fact that $\cE(w_n)-\frac{1}{N+\alpha}\cP(w_n)\geq 0$, we thus conclude that
\begin{alignat*}{3}
c_G&=\cE(u_n)-\frac{1}{N+\alpha}\cP(u_n)+o(1)
\geq \sum\cE(v_n^j)-\frac{1}{N+\alpha}\cP(v_n^j)+o(1)\nonumber&\\
&=\sum \Big(\frac{1}{2}-\frac{N-2}{2(N+\alpha)}\Big)\int_{\Rset^N}\abs{\nabla v_n^j}^2+\Big(\frac{1}{2}-\frac{N}{2(N+\alpha)}\Big)\int_{\Rset^N}\abs{v_n^j}^2+o(1)&\nonumber\\
&\geq \sum t_n^{N-2} \Big(\frac{1}{2}-\frac{N-2}{2(N+\alpha)}\Big)\int_{\Rset^N}\abs{\nabla v_n^j}^2+ t_n^N\Big(\frac{1}{2}-\frac{N}{2(N+\alpha)}\Big)\int_{\Rset^N}\abs{v_n^j}^2+o(1)&\nonumber\\
&=\sum \cE\big(v_n^j(\frac{\cdot}{t_n})\big)-\cP\big(v_n^j(\frac{\cdot}{t_n})\big)+o(1)\geq \abs{Gq}c_{S_q}+o(1).
\end{alignat*}
This is a contradiction.
For the other case that $\lim_{n\to\infty} \cP(w_n)<0$, we conclude that there exists $t_n\leq 1$ such that $w_n(\frac{\cdot}{t_n})\in \cP_G$. We therefore deduce
\begin{alignat*}{3}
c_G&=\cE(u_n)-\frac{1}{N+\alpha}\cP(u_n)+o(1)\geq \cE(w_n)-\frac{1}{N+\alpha}\cP(w_n)+o(1)&\nonumber\\
&\geq\cE(w_n(\frac{\cdot}{t_n}))-\cP(w_n(\frac{\cdot}{t_n}))+o(1)\geq c_G+o(1).
\end{alignat*}
This leads to $\lim_{n\to\infty} t_n=1$ so that  $\lim_{n\to\infty}\cE(w_n)=c_G$ and $\lim_{n\to\infty}\cE(v^j_n)=0$.
We thus deduce that for each $j$
 $$\lim_{n\to\infty}\cP(v^j_n)=\lim_{n\to\infty}\cP(v^j_n)-(N+\alpha)\lim_{n\to\infty}\cE(v^j_n)
 \leq -\frac{\alpha}{2}\lim_{n\to\infty}\|v_n^j\|^2<0.$$
 A contradiction again.

\textbf{Case 3}: up to a subsequence, for any $\ell\geq 1$,  $\lim_{n\to\infty}\dist(y_n,\partial^\ell \cF)=+\infty.$

In such case, up to a subsequence, we may assume for $z_i\neq z_j\in Gy_n$, there holds $\abs{x-y}\geq 4r_n$ for any $x\in B_{3r_n}(z_i)$ and $y\in B_{3r_n}(z_j)$.
We take the test function that
$$\phi_n^{G}=\sum_{g\in G}\xi(\abs{x-gy_n}/r_n)u_n.$$
We conclude that $\phi_n^G\in \mathcal{H}_G$ since for every $\bar{g}\in G$,
$$\bar{g}\diamond \phi_n^{G}=\sum_{g\in G}\xi(\abs{x-g\bar{g}y_n}/r_n)u_n(\bar{g}^{-1}x)=\psi(\bar{g})\phi_n^G.$$
By repeating the arguments as above, we conclude that
$$
\int_{B_{3r_n}(y_n)\setminus B_{2r_n}(y_n)}\abs{\nabla u_n}^2+\abs{u_n}^2=o(1).
$$
Similarly, we define for each $z_j\in G{y_n}$ that
$$
v_n^i(x)=\eta(\abs{x-z_j}/r_n)u_n(x) \; \text{ and }\; w_n=u_n-\sum v_n^j.
$$
Since $\supp v_n^j\subset B_{3r_n}(z_j)$ for each $j$, we deduce that $v_n^j\in H^1(\Rset^N)$. It can also be checked that $w_n\in \mathcal{H}_G$.
Similarly, we have the decompositions that
\begin{equation}\label{48energydecomposition}
\mathcal{E}(u_n)=\sum \mathcal{E}(v_n^j)+\mathcal{E}(w_n)+o(1), \text{ and } \, \cP(u_n)=\sum\cP(v_n^j)+\cP(w_n)+o(1).
 \end{equation}
Again by the Rellich--Kondrachov embedding theorem, we observe that for all $z_j\in G{y_n}$,
\begin{equation*}\label{48positivity}
\lim_{n\to\infty}\int_{\Rset^N}\abs{\nabla v^j_n}^2+\abs{v_n^j}^2>0.
\end{equation*}
We conclude that $\cP(v_n^j)=o(1)$ and $\cP(w_n)=o(1)$ so that
there exists $s_n^j\in(0,+\infty)$ such that $v_n^j(\frac{\cdot}{s_n^j})\in \cP$ with
$\lim_{n\to\infty} s_n^j=1$.
We therefrom deduce by the above \eqref{48energydecomposition}  that
\begin{align*}
c_G=\lim_{n\to\infty}\mathcal{E}(u_n)&=\sum \lim_{n\to\infty}\mathcal{E}(v_n^j)+\lim_{n\to\infty}\mathcal{E}(w_n)\geq
\sum \lim_{n\to\infty}\mathcal{E}(v_n^j(\frac{\cdot}{s_n^j}))
\geq \abs{G}c_0.
\end{align*}
This is in contradiction to the proposition \ref{energystrictinequality} that $c_G<c_{G}^*<\abs{G}c_0$.
\end{proof}

\vskip .1in

\noindent\textbf{Proof of Theorem \ref{thm1.1}.}  By induction, we suppose Proposition \ref{energycharacterization} holds provided that the rank of Coxeter group is no more than $k-1$. We consider the mountain pass problem \eqref{minimizationprob} in which the Coxeter group $G$ is of rank $k$. By Proposition \ref{LpropPSsequence},
there exists a Pohozaev--Palais--Smale sequence $(u_n)_{n\in\Nset}$ in $\mathcal{H}_G$ at the energy level $c_G$. We then obtain a nontrivial weak solution $u\in \mathcal{H}_G$ with $\cE(u)\leq c_G$
by applying Proposition \ref{energystrictinequality} and Lemma \ref{procompactness}. This completes  Lemma \ref{energycharacterization} that $c_G=p_G=m_G=\cE(u)$, which in turn yields a strong convergence of $(u_n)_{n\in\Nset}$ in $H^1(\Rset^N)$. By the symmetric criticality principle \cite[Theorem 1.28]{Willem1996}),
we see that $u$ is a critical point of $\mathcal{E}$ in $H^1(\Rset^N)$, so that $u$ is a $G$-groundstate.
The regularity $u\in C^2(\Rset^N)$ follows from Theorem \ref{thm1.2}.

We next show that $u$ keeps a constant sign on the interior of the strict fundamental domain $\mathcal{F}$.
Our proofs essentially follow the arguments of \cite[Proposition 2.6]{GhimentiVanSchaftingen2016}, see also \cite{XiaWang2019,XiaWang2018,XiaZhang2020}.

\noindent\textbf{Proof of Theorem \ref{thm1.3}.}
Taking the subspace
$$
H^1_0(\mathcal{F})=\{v\in H^1(\mathcal{F})\st v=0 \text{ almost everywhere in } \Rset^N\setminus {\rm{int}}(\mathcal{F})\},
$$
we define two functionals $\mathcal{E}_0,\cP_0\in C^1(H^1_0(\mathcal{F}),\mathbb R)$ such that for $v\in H^1_0(\mathcal{F})$,
$$
\cE_0(v)=\frac{\abs{G}}{2} \int_{\mathcal{F}}\abs{\nabla v}^2+\abs{v}^2-\frac{1}{2}\int_{\mathcal{F}}\int_{\mathcal{F}}K_\alpha(x,y)F(v(x))
F(v(y))\dif x\dif y
$$
and
$$
\cP_0(v)=\frac{(N-2)\abs{G}}{2}\int_{\mathcal{F}}\abs{\nabla v}^2+\frac{N\abs{G}}{2}\int_{\mathcal{F}}\abs{v}^2-\frac{N+\alpha}{2}\int_{\mathcal{F}}\int_{\mathcal{F}}K_\alpha(x,y)F(v(x))
F(v(y))\dif x\dif y
$$
with the kernel $K_\alpha:\mathcal{F}\times \mathcal{F}\to \mathbb R$ being defined by
$
K_\alpha(x,y)=\sum_{g_i,g_j\in G}A_\alpha \abs{g_i x-g_j y}^{\alpha-N}
$.
We observe that $\chi_{\mathcal{F}} u\in H^1_0(\mathcal{F})$ for $u\in \mathcal{H}_G$ where $\chi_{\mathcal{F}}$ is the characteristic function of the conic region $\mathcal{F}$.  Conversely, for each $v\in H^1_0(\mathcal{F})$, we construct a $G$-symmetry function $\mathcal{U}(v):\Rset^N\to \Rset$ such that
$$
\mathcal{U}(v)(x):=\sum_{g\in G} \psi(g)\big(\chi_{\mathcal{F}}v\big)(gx).
$$
Direct calculations show us that for any $u\in \mathcal{H}_G$, $v\in H^1_0(\mathcal{F})$,
$$ u=\mathcal{U}(\chi_{\mathcal{F}}u), \;\;\mathcal{E}(\mathcal{U}(v))=\mathcal{E}_0(v), \;\;\text{ and }\;\; \cP(\mathcal{U}(v))=\cP_0(v).$$
These facts suggest that $\inf_{\mathcal{P}_{0}}\mathcal{E}_0=\inf_{\mathcal{P}_G}\mathcal{E}=c_G$,
where
$
\mathcal{P}_{0}=\{v\in H^1_0(\mathcal{F})\setminus\{0\}\st \cP_0(v)=0\}
$.

Let $u$ be the nodal solution that achieves $c_G$, then $w:=\chi_{\mathcal{F}}u\in H^1_0(\mathcal{F})$ and
$\mathcal{E}_0(w)=\inf_{\cP_0}\mathcal{E}_0=c_G$.
Note that $\abs{w}\in H^1_0(\mathcal{F})$. Since $f$ is odd, $F$ is even and we thus deduce that
$$
\mathcal{E}_0(\abs{w})=\mathcal{E}_0(w)=c_G\;\;\text{ and }\;\;
\mathcal{P}_0(\abs{w})=\mathcal{P}_0(w)=0.$$
We thus assume that $w\geq 0$ in $\mathcal{F}$. On the other hand,
$\cP(\mathcal{U}(w))=\cP_0(w)=0$ and $
\mathcal{E}(\mathcal{U}(w))=\cE_0(w)=c_G
$ imply that $\mathcal{U}(w)\in \mathcal{P}_G$ also achieves $c_G=p_G$, thus $\mathcal{U}(w)$ is  a weak solution to the Choquard equation
\eqref{eqChoquard}, see \cite[Lemma 5.1]{MVTAMS}.
By Theorem \ref{thm1.2}, we infer that $\mathcal{U}(w)\in C^2$. Therefore, $\mathcal{U}(w)$
satisfies in the classical sense that $-\Delta \mathcal{U}(w)+\mathcal{U}(w) \geq 0$ in $\mathcal{F}$,
which, together with the strong maximum principle for classical supersolutions (see \cite[Theorem 3.5]{GT1997}), yields $\mathcal{U}(w)>0$ everywhere in
${\rm{int}}(\mathcal{F})$.
From the definition of $\mathcal{U}$, we see that $u>0$ everywhere in ${\rm{int}}(\mathcal{F})$. Thus $u$ has exactly
$\abs{G}$ number of nodal domains in the whole space $\Rset^N$.

We finally complete the existence of the global minimal nodal solutions.

\noindent\textbf{Proof of Theorem \ref{thm1.4}.}
 We consider the minimization problem on the nodal solution set,
$$c_{\rm{nod}}=\inf_{u\in\cM_{\rm{nod}}}\cE(u),$$
with the constraint being defined by $\cM_{\rm{nod}}=\{u\in H^1(\Rset^N)\st u^{\pm}\neq 0, \cE'(u)=0\}$.
Clearly we have by Theorem \ref{thm1.1} that the ${\bf{A}_1}$-groundstates do exist, which leads us $c_0\leq c_{\rm{nod}}\leq c_{\bf{A}_1}<2c_0$, here $c_0$ refers to the ground state energy of the Choquard equation \eqref{eqChoquard}. Moreover, $c_0=\inf_{\cN}\cE$ where $\cN$ is the Neari manifold. This can be done due to our monotonic assumption $(F'_1)$.

Taking a minimizing sequence $(v_n)_{n\in\Nset}\subset \cM_{\rm{nod}}$ so that  $ \cE'(v_n)=0$ and $\cE(v_n)\to c_{\rm{nod}}$ as $n\to\infty$. By combining the Pohozaev identity, we obtain the boundedness of $(v_n)_{n\in\Nset}$ in $H^1(\Rset^N)$.
By the assumptions $(F'_1)$ and $(F'_2)$, there exists some constant $C>0$ such that
$$\abs{F(t)},\abs{f(t)t}\leq C(\abs{t}^r+ \abs{t}^p).$$
This, together with the facts that $\langle \cE'(v_n),v_n^\pm \rangle=0$ and the Hardy--Littlewood--Sobolev inequality, implies that
$\liminf_{n\to\infty}\|v_n^{\pm}\|>0$.
We now claim that
$$\sum_{p_1,p_2\in\{r,p\}}\liminf_{n\to\infty} \int_{\Rset^N}(I_\alpha *\abs{v_n^+}^{p_1})\abs{v_n^-}^{p_2}>0.$$
Otherwise, we deduce by the assumptions $(F'_1)$ and $(F'_2)$ again that
$$\limsup_{n\to\infty} \int_{\Rset^N}(I_\alpha *F(v_n^{\pm}))F(v_n^{\mp})\leq C\sum_{p_1,p_2\in\{r,p\}}\limsup_{n\to\infty} \int_{\Rset^N}(I_\alpha*\abs{v_n^+}^{p_1})\abs{v_n^{-}}^{p_2}=0 $$
and
$$\limsup_{n\to\infty} \int_{\Rset^N}(I_\alpha*F(v_n^\pm))f(v_n^\mp)v_n^{\mp}\leq C\sum_{p_1,p_2\in\{r,p\}}\limsup_{n\to\infty} \int_{\Rset^N}(I_\alpha *\abs{v_n^+}^{p_1})\abs{v_n^{-}}^{p_2}=0 $$
We therefrom obtain that
$$\cE(v_n)=\cE(v_n^{+})+\cE(v_n^-)+o(1),\; \text{ and }\;\langle\cE'(v_n^\pm),v_n^\pm\rangle =\langle\cE'(v_n),v_n^\pm\rangle+o(1)=o(1).$$
It then follows that there exist $s_n^\pm\in(0,+\infty)$ with
$\lim_{n\to\infty} s_n^\pm=1$ such that $s^\pm_nv_n^\pm\in \mathcal{N}$. This leads a contradiction that
\begin{align*}
  2c_0>c_{\rm{nod}}+o(1)&=\cE(v_n)=\cE(v_n^+)+\cE(v_n^-)+o(1)&\\
  &=\cE(s_n^+v_n^+)+\cE(s_n^-v_n^-)+o(1)\geq 2c_0+o(1).
\end{align*}
By adopting the proof of \cite[Lemma 3.6]{GhimentiVanSchaftingen2016}, we conclude that there exist $R>0$ and one pair $(p_1,p_2)\in\{(r,r),(r,p),(p,r),(p,p)\}$ such that
$$\liminf_{n\to\infty}\sup_{a\in\Rset^N}\int_{B_R(a)}\abs{v^+_n}^{\frac{2Np_1}{N+\alpha}}
\int_{B_R(a)}\abs{v_n^-}^{\frac{2Np_2}{N+\alpha}}>0.$$
Up to a translation and a subsequence, we can assume that $$\liminf_{n\to\infty}\int_{B_R(0)}\abs{v_n^+}^{\frac{2Np_1}{N+\alpha}}>0, \text{ and }\liminf_{n\to\infty}\int_{B_R(0)}\abs{v_n^-}^{\frac{2Np_2}{N+\alpha}}>0.$$
This means that $v_n\wto v\in H^1(\Rset^N)$ and $v^\pm\neq 0$. In particular, $\cE'(v)=0$ and $\cP(v)=0$. We then deduce by the weakly lower semicontinuity of the norm that
\begin{align*}
  c_{\rm{nod}}&\leq \cE(v)=\cE(v)-\frac{1}{N+\alpha}\cP(v)&\\
&=\frac{\alpha}{2(N+\alpha)}\int_{\Rset^N}\abs{\nabla v}^2+\frac{2+\alpha}{2(N+\alpha)}\int_{\Rset^N}\abs{v}^2 &\\
&\leq \frac{\alpha}{2(N+\alpha)}\int_{\Rset^N}\abs{\nabla v_n}^2+\frac{2+\alpha}{2(N+\alpha)}\int_{\Rset^N}\abs{v_n}^2+o(1)&\\
&=\cE(v_n)-\frac{1}{N+\alpha}\cP(v_n)+o(1)=\cE(v_n)+o(1).
\end{align*}
Hence, the infimum value $c_{\rm{nod}}$ is achieved by the sign-changing solution $v$ and then the strongly convergence follows.

\section*{Appendix}

 \begin{appendices}
\label{appendix}

\section{Finite Coxeter groups}

In this section, we shall recall some fundamental theory on Coxeter groups including the classification result and the Coxeter diagram. Most of these background knowledge are contained in the references \cite{Davis2008,Thomas2017}. We only list some definitions and results because the related proofs can be found either in \cite{Davis2008,Thomas2017} or in the previous work \cite{XiaZhang2020}. The first concept is the reflection.
\begin{definition} {\rm\cite[Definition 6.1.1]{Davis2008}}
A linear reflection on a vector space $V$ is a linear automorphism $r :V \to V$ such that $r^2 = 1$ and such the fixed subspace $H_r$ is a hyperplane.
\end{definition}
With the help of reflections, we introduce the formal definition of Coxeter groups.
\begin{definition}{\rm\cite[Definition 1.18]{Thomas2017}}\label{defcoxeter}
Let $I$ be a finite indexing set with cardinal number $\abs{I}=k$ and $\mathcal{S} =\{s_i\}_{i\in I}$.
A Coxeter matrix $M=(m_{ij})_{i, j\in I}$ on the set $\mathcal{S}$ is a  $k\times k$
symmetric matrix with entries in $\Nset\cup \{\infty\}$ such that $m_{ii}=1$  for all $i\in I$ and $m_{ij}\geq 2$ for all distinct $i, j\in I$.
The associated Coxeter group $G=G_{\mathcal{S}}$ is defined by the presentation
$$G =\langle \mathcal{S} \st s_i^2=1, \, \forall i\in I \;\text{ and }\; (s_is_j)^{m_{ij}}=1, \; \text{ for } i\neq j\in I\rangle.$$
The set $\mathcal{S}$ is a Coxeter generating set for $G$ and the cardinality $\abs{I}$ is called the rank of $G$.
\end{definition}

The following Coxeter diagram  is a useful tool in classifying the finite Coxeter groups.
\begin{definition} \rm{\cite[Definition 3.5.1]{Davis2008}}
Suppose that $M = (m_{ij})$ is a Coxeter matrix on a set $I$.
We associate to $M$ a graph $\Upsilon$ called its Coxeter graph. The vertex set
of $\Upsilon$ is $I$. Distinct vertices $i$ and $j$ are connected by an edge if and only if
$m_{ij}\geq 3$. The edge ${i,j}$ is labeled by $m_{ij}$ if $m_{ij} \geq 4$. If $m_{ij} = 3$, the edge is left
unlabeled. The graph together with the labeling of its edges is called the Coxeter diagram associated to $M$.
\end{definition}

The following proposition (see e.g., {\rm\cite[Theorem 4.1.6 and Proposition 4.1.7]{Davis2008}}) shows that
the study of finite Coxeter groups can be largely reduced to the case when the Coxeter is irreducible, which means its diagram is connected.
\begin{proposition}\label{proparabolic}
 For each $\mathcal{T}\subset\mathcal{S}$, $G_{\mathcal{T}}$ is a Coxeter group. Moreover,
 suppose $\mathcal{S}$ can be partitioned into two nonempty disjoint subsets $\mathcal{T}$ and $\mathcal{T}'$ such that $(st)^2=1$ for all $s\in \mathcal{T}$ and $t\in \mathcal{T}'$. Then
$G= G_{\mathcal{T}}\times G_{\mathcal{T}'}$.
\end{proposition}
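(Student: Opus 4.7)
The plan is to handle the two assertions of Proposition \ref{proparabolic} separately, using the canonical Tits (geometric) representation of a Coxeter group as the unifying tool.

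For the first assertion, let $G_{\mathcal{T}}^{\mathrm{abs}}$ denote the abstract Coxeter group presented by the Coxeter submatrix $(m_{ij})_{i,j\in \mathcal{T}}$. By Definition \ref{defcoxeter}, there is a natural homomorphism $\pi:G_{\mathcal{T}}^{\mathrm{abs}}\to G$ sending each abstract generator to the corresponding $s\in\mathcal{T}\subset G$, whose image is exactly the subgroup $G_{\mathcal{T}}=\langle \mathcal{T}\rangle\leq G$. The content is injectivity of $\pi$, i.e.\ that no hidden relations appear when $\mathcal{T}$ sits inside $G$. The plan is to introduce the Tits canonical representation $\rho:G\hookrightarrow \mathrm{GL}(V)$, where $V$ is the real vector space with basis $\{e_s\}_{s\in\mathcal{S}}$ equipped with the symmetric bilinear form $\langle e_s,e_t\rangle=-\cos(\pi/m_{st})$ (with the convention $\pi/\infty=0$), and $\rho(s)$ is the reflection $v\mapsto v-2\langle e_s,v\rangle e_s$. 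Faithfulness of $\rho$ is the standard nontrivial input. The subspace $V_{\mathcal{T}}:=\mathrm{span}\{e_t\}_{t\in \mathcal{T}}$ is $\rho(G_{\mathcal{T}})$-invariant and, with the restricted form, provides exactly the Tits representation of $G_{\mathcal{T}}^{\mathrm{abs}}$, which is itself faithful. Since $\rho\circ\pi$ factors through this faithful restriction, $\pi$ is injective, and $G_{\mathcal{T}}\cong G_{\mathcal{T}}^{\mathrm{abs}}$ is a Coxeter group.

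For the second assertion, the hypothesis $(st)^2=1$ together with $s^2=t^2=1$ gives $st=ts$ for every $s\in\mathcal{T}$ and $t\in\mathcal{T}'$. Consequently the subgroups $G_{\mathcal{T}}$ and $G_{\mathcal{T}'}$ commute elementwise, and since $\mathcal{S}=\mathcal{T}\sqcup\mathcal{T}'$ generates $G$, we have $G=G_{\mathcal{T}}G_{\mathcal{T}'}$. To conclude $G=G_{\mathcal{T}}\times G_{\mathcal{T}'}$ it remains to show $G_{\mathcal{T}}\cap G_{\mathcal{T}'}=\{1\}$. I would again use $\rho$: when $m_{st}=2$ one has $\langle e_s,e_t\rangle=0$, so $\rho(s)e_t=e_t$. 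Therefore every element of $G_{\mathcal{T}}$ fixes $e_{t}$ for all $t\in\mathcal{T}'$ and preserves $V_{\mathcal{T}}$, while every element of $G_{\mathcal{T}'}$ fixes $e_s$ for all $s\in\mathcal{T}$ and preserves $V_{\mathcal{T}'}$. If $g\in G_{\mathcal{T}}\cap G_{\mathcal{T}'}$ then $\rho(g)$ fixes every basis vector of $V$, hence $\rho(g)=1$; faithfulness forces $g=1$.

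The main obstacle is the faithfulness of the Tits representation, which is the classical but nontrivial theorem underlying essentially every structural result about Coxeter groups (see \cite[Ch.~4]{Davis2008}). Once this is invoked as a black box, both parts of the proposition reduce to linear-algebraic bookkeeping on the invariant subspaces $V_{\mathcal{T}}$ and $V_{\mathcal{T}'}$ together with the elementary observation that $(st)^2=1$ and $s^2=t^2=1$ encode commutation. An alternative path that avoids $\rho$ would be to invoke Tits's solution of the word problem (equivalently, the exchange/deletion condition) to rule out extra relations in $G_{\mathcal{T}}$; for the direct product part, no additional machinery beyond commutation and generation is then needed.
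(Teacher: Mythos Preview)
The paper does not supply its own proof of this proposition; it is stated in the appendix as background material with a citation to \cite[Theorem 4.1.6 and Proposition 4.1.7]{Davis2008}, and the surrounding text explicitly says the proofs are to be found in \cite{Davis2008,Thomas2017} or \cite{XiaZhang2020}. So there is no in-paper argument to compare against.

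Your proposed proof is correct and is essentially one of the standard textbook routes. The use of the Tits geometric representation is exactly the machinery underlying the results in \cite{Davis2008} that the paper cites. In particular, your argument for the first assertion --- comparing the restriction of $\rho$ to $V_{\mathcal{T}}$ with the canonical representation of the abstract group $G_{\mathcal{T}}^{\mathrm{abs}}$ and invoking faithfulness of the latter --- is the standard way to show parabolic subgroups are themselves Coxeter groups. For the second assertion, your observation that $m_{st}=2$ forces $\langle e_s,e_t\rangle=0$, so that $\rho(G_{\mathcal{T}})$ acts trivially on $V_{\mathcal{T}'}$ and vice versa, is clean and correct; faithfulness then gives $G_{\mathcal{T}}\cap G_{\mathcal{T}'}=\{1\}$. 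The alternative you mention via the deletion/exchange condition is the other common route and is closer to how \cite{Davis2008} actually organizes the proof of Theorem 4.1.6, but both are standard and either would be acceptable here.
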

 As a result, any finite Coxeter group $G$ is the direct products of some irreducible  Coxeter groups.
We now recall the famous classification of the finite irreducible Coxeter groups, which was first completely enumerated by Coxeter \cite{Coxeter1935}.
\begin{Theorem}\rm{\cite[Theorem 6.9.1]{Davis2008}}\label{classification}
The finite irreducible Coxeter groups and the associated Coxeter diagrams are listed in \rm{Table 1}.
\end{Theorem}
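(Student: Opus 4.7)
The plan is to prove this classical theorem via the canonical reflection representation: equip $V=\mathbb{R}^{\abs{I}}$ with the symmetric bilinear form $B$ defined by $B(e_i,e_j)=-\cos(\pi/m_{ij})$, and let each generator $s_i$ act as the $B$-orthogonal reflection fixing $\{v\in V: B(e_i,v)=0\}$. Following the standard approach (Tits), I would first verify that this gives a well-defined faithful linear representation $G\hookrightarrow\mathrm{GL}(V)$, and then prove the central dichotomy: the Coxeter group $G$ is finite if and only if the Gram matrix $B=(-\cos(\pi/m_{ij}))_{i,j\in I}$ is positive definite. One direction uses that a finite group of automorphisms preserves an inner product (averaging) and, in this canonical representation, the invariant form must be proportional to $B$; the reverse direction follows by noting that a positive definite $B$ realizes $G$ as a discrete subgroup of the compact group $O(V,B)$, hence finite.

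Next, by Proposition \ref{proparabolic} the problem reduces to classifying \emph{irreducible} finite Coxeter groups, i.e.\ those whose Coxeter diagram $\Upsilon$ is connected. So the task becomes purely combinatorial: enumerate all connected edge-labeled graphs $\Upsilon$ (with labels $m_{ij}\in\{3,4,5,\ldots,\infty\}$) for which the symmetric matrix $B$ is positive definite. The key monotonicity principle I would exploit is that any induced subgraph of an admissible diagram is itself admissible, since the corresponding principal submatrix of $B$ inherits positive definiteness. Consequently, I would identify a list of \emph{forbidden} diagrams--those whose $\det B\leq 0$--and show that any connected $\Upsilon$ avoiding all of them must lie in the announced list.

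The combinatorial heart of the argument is a case analysis driven by the largest edge label $m=\max m_{ij}$ and the branching structure of $\Upsilon$. I would split:
\begin{itemize}
\item[(i)] $\Upsilon$ has a cycle: rule this out by computing that the cyclic ``extended'' diagram $\widetilde{A}_n$ has determinant zero and is therefore forbidden.
\item[(ii)] $\Upsilon$ is a tree with all labels equal to $3$ (simply-laced): the branching analysis forces $\Upsilon\in\{A_n,D_n,E_6,E_7,E_8\}$, using the forbidden extended Dynkin diagrams $\widetilde{D}_n,\widetilde{E}_6,\widetilde{E}_7,\widetilde{E}_8$.
\item[(iii)] $\Upsilon$ has exactly one edge labeled $4$: reduces to $B_n$ or $F_4$, using $\widetilde{B}_n,\widetilde{C}_n,\widetilde{F}_4$ as the forbidden configurations.
\item[(iv)] $\Upsilon$ has an edge labeled $5$: only $H_3$, $H_4$, or the dihedral $I_2(5)$ survive, because a path of length four with a $5$-edge can be shown to have a $0$-eigenvalue.
\item[(v)] $\Upsilon$ has an edge labeled $\geq 6$ or two edges labeled $\geq 4$: forces $|I|=2$, yielding the dihedral $I_2(m)$.
\end{itemize}
Each exclusion is a finite-dimensional linear-algebra check: compute $\det B$ or exhibit an explicit $v\in V$ with $B(v,v)\leq 0$. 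Existence for each diagram in the final list is verified by exhibiting $G$ as a concrete finite symmetry group (symmetric groups, signed permutation groups, Weyl groups of exceptional root systems, symmetry groups of the $120$-cell / $600$-cell, dihedral groups).

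The main obstacle is the enumeration in the tree case: one must show that no tree with a vertex of valence $\geq 4$ and no tree with two branch points can be admissible, and that the permissible branch lengths at a single trivalent vertex are exactly the Dynkin constraints. This is essentially the ``$1/p+1/q+1/r>1$'' inequality arising from explicit determinant formulas for $T_{p,q,r}$-shaped diagrams, and establishing these determinant identities is the real computational content of the proof.
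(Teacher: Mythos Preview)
Your outline is the standard Tits--Coxeter argument and is mathematically sound; nothing in it would fail. However, the paper does not actually prove this theorem. It appears in the appendix with an explicit citation to \cite[Theorem 6.9.1]{Davis2008}, and the surrounding text states that the appendix only \emph{lists} definitions and results, with proofs deferred to \cite{Davis2008,Thomas2017} or \cite{XiaZhang2020}. So there is no ``paper's own proof'' to compare against: the classification is quoted as background.

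What you have written is a faithful sketch of the classical proof (faithful canonical representation, finiteness $\Leftrightarrow$ positive definiteness of the Gram form $B$, reduction to connected diagrams via Proposition~\ref{proparabolic}, then the combinatorial elimination of forbidden subdiagrams). It is more than the paper needs, since the paper only invokes the final list (and, in the body, really only the consequence that removing a single generator from an irreducible $W_{k_1}$ leaves at most three irreducible factors, used in Proposition~\ref{energystrictinequality}). If you wish to include a proof, your sketch is appropriate; just be aware that you are supplying something the paper deliberately omitted.
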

\begin{table}[htbp]
\begin{center}{}
\begin{tabular}{p{2cm}<{\centering}p{4cm}<{ }p{1cm}<{ }
p{4cm}<{ }}
${\bf{A}}_n \; (n\geq 1) $
 &
\begin{tikzpicture}
\vertex  (n1) at (0,0)  [draw]{};
  \vertex (n2) at (0.6,0) [draw]{};
  \vertex (n3) at (1.2,0) [draw]{};
  \vertex (n4) at (1.8,0) [draw]{};
  \vertex (n5) at (2.4,0) [draw]{};
  \vertex (n6) at (3.0,0) [draw]{};
\draw (n1)--(n2)  ;
  \draw (n2)--(n3);
  \draw [ dashed] (n3)--(n4);
     \draw (n4)--(n5);
      \draw (n5)--(n6);
\end{tikzpicture}
 & ${\bf{E}}_8$ &
 \begin{tikzpicture}
\vertex  (n1) at (0,0)  [draw]{};
  \vertex (n2) at (0.6,0) [draw]{};
  \vertex (a)  at (1.2,0.43) [draw] {};
  \vertex (n3) at (1.2,0) [draw]{};
  \vertex (n4) at (1.8,0) [draw]{};
  \vertex (n5) at (2.4,0) [draw]{};
\vertex (n6) at (3.0,0) [draw]{};
\vertex (n7) at (3.6,0) [draw]{};
   \draw (n1)--(n2)  ;
  \draw (n3)--(a);
   \draw (n2)--(n3);
  \draw (n3)--(n4);
     \draw (n4)--(n5);
        \draw (n5)--(n6);
         \draw (n6)--(n7);
  \end{tikzpicture}\\
\specialrule{0em}{4pt}{8pt}
${\bf{B}}_n\; (n\geq 2)$  &
\begin{tikzpicture}
\vertex   (n0) at (0,0)    [draw] { };
  \vertex (n1) at (0.6,0)  [draw] { };
  \vertex (n2) at (1.2,0)  [draw] { };
  \vertex (n3) at (1.8,0)  [draw] { };
  \vertex (n4) at (2.4,0)  [draw] { };
  \vertex (n5) at (3.0,0)  [draw] { };
  \path
    (n4) edge node[pos=0.5,above]{4} (n5);
  \draw (n0)--(n1)  ;
  \draw (n1)--(n2);
  \draw [ dashed] (n2)--(n3);
     \draw (n3)--(n4);
     \draw (n4)--(n5);
  \end{tikzpicture}
& ${\bf{H}}_3 $ &
\begin{tikzpicture}
  \vertex (n0) at (0.6,0)   [draw] { };
  \vertex  (n1) at (1.2,0)   [draw] { };
  \vertex (n2) at (1.8,0)    [draw] { };
    \path
    (n0) edge node[pos=0.5,above]{5} (n1);
   \draw (n0)--(n1)  ;
  \draw (n1)--(n2)  ;
  \end{tikzpicture}\\
\specialrule{0em}{4pt}{8pt}
 ${\centering} {\bf{D}}_n \; (n\geq 4)$   &
  \begin{tikzpicture}
\vertex   (n0) at (0,0)    [draw] { };
  \vertex (n1) at (0.6,0)  [draw] { };
  \vertex (n2) at (1.2,0)  [draw] { };
  \vertex (a)  at (1.8,0)  [draw] { };
  \vertex (n3) at (2.4,0)  [draw] { };
  \vertex (n4) at (2.4,0.43)  [draw] { };
    \vertex (n5) at (3.0,0) [draw] { };
    \draw (n0)--(n1);
  \draw (n1)--(n2);
     \draw [dashed] (n2)--(a);
     \draw  (a)--(n3);
         \draw  (n3)--(n4);
     \draw (n3)--(n5);
  \end{tikzpicture}
  & ${\bf{H}}_4$ &
  \begin{tikzpicture}
  \vertex (n0) at (0,0)  [draw] { };
  \vertex (n1) at (0.6,0)  [draw] { };
  \vertex (n2) at (1.2,0)  [draw] { };
 \vertex (n3) at (1.8,0)   [draw] { };
 \path
   (n0) edge node[pos=0.5,above]{5} (n1);
   \draw (n0)--(n1)  ;
  \draw (n1)--(n2)  ;
  \draw (n2)--(n3);
  \end{tikzpicture}\\
\specialrule{0em}{4pt}{8pt}
${\bf{E}}_6$   &
\begin{tikzpicture}
  \vertex (n1) at (0,0)  [draw] { };
  \vertex (n2) at (0.6,0)  [draw] { };
 \vertex (n3) at (1.2,0)   [draw] { };
  \vertex (n4) at (1.2,0.43)  [draw] { };
  \vertex (n5) at (1.8,0)   [draw] { };
   \vertex (n6) at (2.4,0)    [draw] { };
  \draw (n1)--(n2)  ;
  \draw (n2)--(n3);
  \draw (n3)--(n4);
     \draw (n3)--(n5);
     \draw (n5)--(n6);
  \end{tikzpicture}
   & ${\bf{F}}_4$ &
  \begin{tikzpicture}
\vertex   (n0) at (0,0)    [draw] { };
  \vertex (n1) at (0.6,0)  [draw] { };
  \vertex (n2) at (1.2,0)  [draw] { };
  \vertex (n3) at (1.8,0)  [draw] { };
   \path
    (n1) edge node[pos=0.5,above]{4} (n2);
  \draw (n0)--(n1)  ;
  \draw (n1)--(n2);
  \draw  (n2)--(n3);
  \end{tikzpicture}\\
\specialrule{0em}{4pt}{8pt}
${\bf{E}}_7$   &
 \begin{tikzpicture}
 \vertex  (n1) at (0,0)  [draw]{};
  \vertex (n2) at (0.6,0) [draw]{};
  \vertex (a)  at (1.2,0.43) [draw] {};
  \vertex (n3) at (1.2,0) [draw]{};
  \vertex (n4) at (1.8,0) [draw]{};
  \vertex (n5) at (2.4,0) [draw]{};
\vertex (n6) at (3.0,0) [draw]{};
   \draw (n1)--(n2)  ;
  \draw (n3)--(a);
   \draw (n2)--(n3);
  \draw (n3)--(n4);
     \draw (n4)--(n5);
        \draw (n5)--(n6);
  \end{tikzpicture}
  & ${\bf{I}}_2(m)$&
  \begin{tikzpicture}
  \vertex  (n1) at (0,0)  [draw]{};
  \vertex (n2)  at (0.6,0) [draw]{};
  \path
   (n1) edge node[pos=0.5,above]{$m$} (n2);
   \draw (n1)--(n2)  ;
  \end{tikzpicture}
\end{tabular}
\caption{\quad  Classification of the finite irreducible Coxeter groups}
\end{center}
\end{table}
The finite irreducible Coxeter groups consist of three one-parameter families of increasing rank ${\bf{A}}_n$, ${\bf{B}}_n$, ${\bf{D}}_n$, one one-parameter family of dimension two ${\bf{I}}_2(m)$, and six exceptional groups: ${\bf{E}}_6$, ${\bf{E}}_7$, ${\bf{E}}_8$ and ${\bf{H}}_3$, ${\bf{H}}_4$, ${\bf{F}}_4$. Each of these groups corresponds to a symmetry group of certain regular polytope (polygons) or semiregular polytope in the corresponding dimensional space.  Here we note for the diagram of ${\bf{I}}_2(m)$ that if $m=2$, there is no edge, and if $m=3$, the edge is not labeled.

The following lemmas is a direct consequence of the finite Coxeter groups, we refer to \cite[Lemma 2.4]{Thomas2017} and \cite[Lemma 2.7]{XiaZhang2020}, respectively.
\begin{lemma}\label{lemmaepimorphism}
There exists a unique epimorphism $\psi: G\to \{\pm 1\}$ induced by $\psi(s) = -1$ for
all $s\in\mathcal{S}$, so that for any $g\in G$, $\psi(g^{-1})=\psi(g)^{-1}$.
\end{lemma}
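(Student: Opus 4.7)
The plan is to construct $\psi$ by invoking the universal property of the Coxeter presentation of $G$ from Definition \ref{defcoxeter}, and then to read off uniqueness and the inverse identity as formal consequences of the homomorphism property.

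First, I would define the set map $\psi_0:\mathcal{S}\to\{\pm 1\}$ by $\psi_0(s)=-1$ for every $s\in\mathcal{S}$. By the universal property of presentations, $\psi_0$ extends to a group homomorphism $\psi:G\to\{\pm 1\}$ provided that the image of each defining relation is trivial in the abelian target. The relations to check are $s_i^2=1$ and $(s_is_j)^{m_{ij}}=1$ for $i\neq j$. The first becomes $(-1)^2=1$, which is immediate. For the second, note that $\psi_0(s_i)\psi_0(s_j)=(-1)(-1)=1$, so its $m_{ij}$-th power is $1^{m_{ij}}=1$ regardless of the value of $m_{ij}$. Thus $\psi$ is well defined on $G$, and it is an epimorphism because $\psi(s)=-1$ while $\psi(1)=1$, so the image is all of $\{\pm 1\}$.

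Uniqueness is then straightforward: any homomorphism $\tilde\psi:G\to\{\pm 1\}$ satisfying $\tilde\psi(s)=-1$ on the generating set $\mathcal{S}$ must agree with $\psi$ on every word in the generators, hence on all of $G$. The identity $\psi(g^{-1})=\psi(g)^{-1}$ is a purely formal property of any group homomorphism, obtained from $\psi(g)\psi(g^{-1})=\psi(gg^{-1})=\psi(1)=1$.

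There is essentially no hard step here. The only substantive point worth emphasizing is that no arithmetic obstruction arises from the Coxeter integers $m_{ij}$: assigning the sign $-1$ uniformly to all generators forces $s_is_j\mapsto +1$, after which the braid-type relation becomes vacuous in the target group, independently of the parity of $m_{ij}$. Consequently, the lemma reduces to a direct verification of the universal property of the presentation.
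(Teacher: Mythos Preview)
Your proof is correct and follows the standard approach: verify the defining Coxeter relations map to the identity in $\{\pm 1\}$, invoke the universal property of the presentation to extend $\psi_0$, and read off surjectivity, uniqueness, and the inverse identity. The paper itself does not prove this lemma but simply cites \cite[Lemma 2.4]{Thomas2017}, where the argument is exactly the one you give.
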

\begin{lemma} \label{normalsubgroup}
For every finite irreducible Coxeter group $W$ of rank $k$, there exists a normal subgroup $N_k\lhd W$ such that
$W= N_k\rtimes {\bf{A}}_1$.
\end{lemma}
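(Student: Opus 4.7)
\textbf{Proof proposal for Lemma \ref{normalsubgroup}.} My plan is to realize $N_k$ as the kernel of the sign epimorphism $\psi$ already produced in Lemma \ref{lemmaepimorphism}, and then exhibit a complement of order $2$ coming from a single fundamental reflection. Concretely, let $W$ be a finite irreducible Coxeter group of rank $k$ with Coxeter generating set $\mathcal{S}=\{s_1,\ldots,s_k\}$. By Lemma \ref{lemmaepimorphism}, there exists a unique surjective homomorphism $\psi:W\to\{\pm 1\}$ determined by $\psi(s_i)=-1$ for every $i$. Set
$$
N_k:=\ker\psi.
$$
Being the kernel of a homomorphism, $N_k\lhd W$, and since $\psi$ is surjective onto a group of order $2$, the first isomorphism theorem gives $[W:N_k]=2$, so $|N_k|=|W|/2$.

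Next I would produce a complement isomorphic to $\mathbf{A}_1$. Pick any $s\in\mathcal{S}$ (for instance $s=s_1$) and set $H:=\langle s\rangle$. From the defining relation $s^2=1$ in Definition \ref{defcoxeter} and the fact that $s\neq 1$, we have $H\cong\mathbb{Z}/2\mathbb{Z}\cong\mathbf{A}_1$. Since $\psi(s)=-1$, the generator $s$ lies outside $N_k$, and the only non-identity element of $H$ is $s$ itself, so $N_k\cap H=\{1\}$. Counting orders then yields $|N_k H|=|N_k|\cdot|H|/|N_k\cap H|=|W|$, hence $N_k H=W$.

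Combining these two facts ($N_k\lhd W$, $H\leq W$, $N_kH=W$, $N_k\cap H=\{1\}$) is exactly the internal semidirect product criterion, and therefore
$$
W=N_k\rtimes H\cong N_k\rtimes\mathbf{A}_1,
$$
which is the claim. I do not expect any serious obstacle: the argument uses only the existence of $\psi$ (supplied by Lemma \ref{lemmaepimorphism}) and the fact that any fundamental reflection is an involution outside $\ker\psi$. The hypothesis that $W$ be irreducible is not needed for the construction itself; it only fixes the rank-$k$ notation $N_k$ and matches how the lemma will be invoked in the proof of Proposition \ref{energystrictinequality}, where $W_{k_1}$ is an irreducible factor of $G$ and the decomposition $W_{k_1}=N_{k_1}\rtimes\langle s_1\rangle$ is used to symmetrize the test functions $U_R^q$.
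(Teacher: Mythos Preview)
Your argument is correct: taking $N_k=\ker\psi$ and complementing by $\langle s\rangle$ for any $s\in\mathcal{S}$ gives exactly the internal semidirect product, and this is precisely the decomposition $W_{k_1}=N_{k_1}\rtimes\langle s_1\rangle$ invoked later in the proof of Proposition~\ref{energystrictinequality}. The paper itself does not give a proof but simply refers to \cite[Lemma~2.7]{XiaZhang2020}, so your self-contained construction is in fact more informative than what appears here.
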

Here we recall the semidirect product $N_k\rtimes {\bf{A}_1}$ means that $W=N_k{\bf{A}_1}$ and $N_k\cap {\bf{A}_1}=\{1\}$.

We finally recall the fundamental domain for the Coxeter group $G$, it is a closed, connected subset $\mathcal{F}\subset \mathbb R^N$ such that $Gx \cap\mathcal{F}\neq\emptyset$ for every $x\in\mathbb R^N$  and $Gx \cap \mathcal{F}=\{x\}$ for every $x$ in the interior of $\mathcal F$. It is called a strict fundamental domain if it
intersects each $G$-orbit in exactly one point. Here $Gx$ refers to the $G$-orbit of the point $x\in \mathbb R^k$.
It turns out that every Coxeter group admits a strict fundamental domain  $\mathcal{F}$ \cite[Corollary 6.6.9]{Davis2008}. In fact, under the group action $G$, for any $g\in G$, $g\mathcal{F}$  is also a fundamental domain. Without loss of generality, we can fix $\cF$ to be the infinite cone formed by $H_s$ for all $s\in \cS$.
Let $\partial^i \mathcal{F}$ denote its $k-i$ dimensional facets of the polyhedral cone $\mathcal{F}$ for $i=0,1,\cdots,k-1$, here $\partial^{0}\mathcal{F}$ refers to the interior of $\mathcal{F}$ for convenience. Let $\mathcal{G}_k$ be the collection of finite Coxeter groups of rank $k$. Also, $\mathcal{G}_0$ contains only the trivial group. We now  state an elementary results on the isotropic groups \cite[Lemma 2.8]{XiaZhang2020}.
\begin{lemma}\label{lemmainduction}
Let $k\geq 1$ and $\mathcal{F}$ be the strict fundamental domain of $G\in \mathcal{G}_k$. Then for any $q\in \partial^{i} \mathcal{F}$ with $i=0,\cdots,k-1$,
$S_q\in \mathcal{G}_{i}$.
\end{lemma}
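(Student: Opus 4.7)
The plan is to identify the stabilizer $S_q$ explicitly as a parabolic subgroup of $G$ and then invoke Proposition \ref{proparabolic} to conclude it is a Coxeter group of the correct rank. The strict fundamental domain $\mathcal{F}$ is an (infinite) polyhedral cone cut out by the walls $H_s$ for $s\in\mathcal{S}$, and its face structure is combinatorially dual to subsets of $\mathcal{S}$: the relative interior of each face of codimension $i$ equals the intersection of $\overline{\mathcal{F}}$ with exactly $i$ walls. Thus for a point $q\in\partial^{i}\mathcal{F}$ there is a uniquely determined subset $\mathcal{S}_q=\{s_{j_1},\dots,s_{j_i}\}\subset\mathcal{S}$ such that $q\in H_s$ if and only if $s\in\mathcal{S}_q$.

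The crux of the proof is the classical parabolic--stabilizer theorem for Coxeter groups (see, e.g., Davis \cite{Davis2008}, Theorem 4.6.3 or the analogous statement in Humphreys): for any point $q$ in the closed fundamental chamber $\overline{\mathcal{F}}$, the isotropy group $S_q=\{g\in G\st gq=q\}$ coincides with the special parabolic subgroup $G_{\mathcal{S}_q}=\langle\mathcal{S}_q\rangle$. The inclusion $G_{\mathcal{S}_q}\subset S_q$ is immediate because each $s\in\mathcal{S}_q$ fixes $H_s\ni q$ pointwise. The nontrivial direction that $S_q\subset G_{\mathcal{S}_q}$ I would prove by induction on the word length $\ell(g)$ with respect to $\mathcal{S}$: if $g\in S_q$ with $\ell(g)>0$, then $g$ must send $\mathcal{F}$ across at least one wall and one shows the first such wall must belong to $\mathcal{S}_q$, reducing to a shorter element of $S_q$. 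This reduction step, which uses the strong exchange condition of Coxeter systems together with the convexity of $\mathcal{F}$, is the main technical obstacle, but it is the standard argument.

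Once the identification $S_q=G_{\mathcal{S}_q}$ is in hand, Proposition \ref{proparabolic} says that $G_{\mathcal{S}_q}$ is itself a Coxeter group with Coxeter generating set $\mathcal{S}_q$ and Coxeter matrix the principal submatrix of $M$ indexed by $\mathcal{S}_q$. In particular, its rank equals $|\mathcal{S}_q|=i$, so $S_q\in\mathcal{G}_i$, proving the lemma. The boundary cases are consistent: for $i=0$ we have $\mathcal{S}_q=\emptyset$ so $S_q=\{1\}\in\mathcal{G}_0$, and for $i=k-1$ the point $q$ lies on an edge of $\mathcal{F}$ shared by $k-1$ walls, giving $S_q$ of rank $k-1$, which is exactly what the proof of Proposition \ref{energystrictinequality} needs when taking test functions supported near $1$-dimensional facets.
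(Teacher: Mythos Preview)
Your proposal is correct: the identification $S_q=G_{\mathcal{S}_q}=\langle\mathcal{S}_q\rangle$ via the parabolic--stabilizer theorem for finite reflection groups, followed by Proposition \ref{proparabolic} to read off the rank, is exactly the standard argument. Note, however, that the paper does not actually prove Lemma \ref{lemmainduction} at all---it simply cites \cite[Lemma 2.8]{XiaZhang2020} for the proof---so there is no in-paper argument to compare against; your write-up supplies precisely the details one would expect to find in that reference.
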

We recall that the isotropic group $S_q$ for $q\in \mathcal{F}$ is the subgroup of $G$ that satisfies $S_q=\{g\in G\st gq=q\}$.
We close this appendix by citing the representation result for the finite Coxeter groups, which is owe to Tits
\cite[Theorem 3.1]{Thomas2017}.
\begin{lemma}
Let $G$ be a finite Coxeter group with rank $k$. There exists a faithful representation
$$\rho: G\to GL(\mathbb R^k ).$$
\end{lemma}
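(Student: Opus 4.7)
The plan is to build the Tits canonical representation explicitly, verify the defining Coxeter relations by a two-dimensional reduction, and then use positive-definiteness of the Tits form (which is where finiteness of $G$ enters) to conclude faithfulness.

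\emph{Step 1: construction.} Let $V=\mathbb{R}^k$ with basis $\{\alpha_s\}_{s\in \mathcal{S}}$ indexed by the Coxeter generators. Define a symmetric bilinear form $B$ on $V$ by
\[
B(\alpha_s,\alpha_t)=-\cos(\pi/m_{st}),
\]
so in particular $B(\alpha_s,\alpha_s)=1$. For each $s\in \mathcal{S}$ set $\sigma_s(v)=v-2B(v,\alpha_s)\alpha_s$. A direct check shows $\sigma_s^{2}=\mathrm{id}$ and that $\sigma_s$ fixes the hyperplane $\{B(\cdot,\alpha_s)=0\}$ pointwise, so each $\sigma_s$ is a linear reflection in the sense of the appendix.

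\emph{Step 2: Coxeter relations and the homomorphism.} For distinct $s,t\in\mathcal{S}$, restrict $\sigma_s$ and $\sigma_t$ to the plane $P_{st}:=\mathbb{R}\alpha_s+\mathbb{R}\alpha_t$. The Gram matrix of $B\rvert_{P_{st}}$ is
\[
\begin{pmatrix} 1 & -\cos(\pi/m_{st}) \\ -\cos(\pi/m_{st}) & 1 \end{pmatrix},
\]
which is positive definite because $m_{st}<\infty$ in the finite case. Hence $P_{st}$ is a Euclidean plane and the composition $\sigma_s\sigma_t\rvert_{P_{st}}$ is a rotation by angle $2\pi/m_{st}$, so it has order exactly $m_{st}$. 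On the $B$-orthogonal complement of $P_{st}$ both $\sigma_s$ and $\sigma_t$ act trivially, which gives $(\sigma_s\sigma_t)^{m_{st}}=\mathrm{id}_V$ globally. By the universal property built into Definition \ref{defcoxeter}, the assignment $s\mapsto \sigma_s$ therefore extends uniquely to a homomorphism $\rho\colon G\to GL(V)$.

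\emph{Step 3: faithfulness (the hard part).} Since $G$ is finite, the classification in Theorem \ref{classification} implies that the Tits form $B$ is positive definite on $V$: each irreducible component of $B$ coincides with the Gram matrix of a system of simple roots of one of the listed finite reflection groups realized inside a Euclidean space. Consequently $\rho(G)\subset O(V,B)$, a compact group. To promote this to injectivity I would argue as follows. Using $B$, construct the finite root system $\Phi=\{\rho(g)\alpha_s : g\in G,\,s\in\mathcal{S}\}\subset V$; the standard exchange/deletion argument for Coxeter systems associates to every reduced expression $s_{i_1}\cdots s_{i_\ell}$ for $g\in G$ a sequence of distinct positive roots, giving the length formula $\ell(g)=\#\{\beta\in \Phi^{+}:\rho(g)\beta\in-\Phi^{+}\}$. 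If $\rho(g)=1$ this forces $\ell(g)=0$, hence $g=1$, so $\ker\rho=\{1\}$.

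The only nontrivial ingredient beyond routine verification is the length/root correspondence used in Step 3; this is the standard Matsumoto--Tits mechanism and is carried out in \cite[Theorem 3.1]{Thomas2017}, whose statement we are quoting. The construction in Steps 1--2 is purely computational and works for every Coxeter matrix; finiteness is used precisely to guarantee positive-definiteness of $B$, which then makes the root-system argument available to close the proof.
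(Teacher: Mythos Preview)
Your argument is the standard Tits canonical representation and is correct in its essentials. Note, however, that the paper does not actually prove this lemma: it is simply quoted from \cite[Theorem 3.1]{Thomas2017} as background in the appendix, with no argument given. So there is nothing in the paper to compare your route against; you have supplied a full proof where the author only gives a citation.

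One small inaccuracy worth flagging: in Step~3 you say finiteness of $G$ is what makes the root-system/length-formula mechanism available. In fact the exchange/deletion argument and the length formula $\ell(g)=\#\{\beta\in\Phi^{+}:\rho(g)\beta\in-\Phi^{+}\}$ work for \emph{arbitrary} Coxeter systems, and the Tits representation is faithful in general (see the reference you cite). Finiteness of $G$ is equivalent to positive-definiteness of $B$, but the latter is not needed for faithfulness; it is only needed if you want $\rho(G)$ to land in an orthogonal group of a genuine Euclidean form, which is what the paper exploits later when realizing $G$ as a group of Euclidean reflections on $\mathbb{R}^k\subset\mathbb{R}^N$. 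Your Steps~1 and~2, and the deduction $\rho(g)=1\Rightarrow\ell(g)=0\Rightarrow g=1$ in Step~3, are fine as written.
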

A faithful representation means that group homomorphism $\rho$ is injective. Namely, the realized space for the group $G\in \mathcal{G}_k$ at least is $\mathbb R^k$, so that the restriction $k\leq N$ in Theorem \ref{thm1.1} is to ensure the well realization of the action of Coxeter group $G$.
\end{appendices}

\bigskip

\noindent{\bf Acknowledgement.}
This paper is supported by National Natural Science Foundation of China (Grant No. 11771324, 11811540026 and 11901456)
and by the Natural Science Foundation of Shaanxi Province (Grant no. 2020JQ--120).

\bigskip
\bibliographystyle{plain}

\begin{thebibliography}{99}
\footnotesize
\bibitem{BV2017}{ L. Battaglia, J. Van Schaftingen,
     Existence of groundstates for a class of nonlinear Choquard equations in the plane,
     Adv. Nonlinear Stud., 17 (2017), 581--594.
     }
\bibitem{Cingolani2012ZAMP} {S. Cingolani, M. Clapp, S. Secchi,
   Multiple solutions to a magnetic nonlinear Choquard equation,
   Z. Angew. Math. Phys., 63 (2012), 233--248.
}
\bibitem{Clapp2013} {M. Clapp, D. Salazar,
     Positive and sign changing solutions to a nonlinear Choquard equation,
     J. Math. Anal. Appl., 407 (2013), 1--15.
}
\bibitem{Coxeter1935}{H. S. M. Coxeter, The complete enumeration of finite groups of the form $r_i^2=(r_ir_j)^{k_{ij}}=1$, J. London Math. Soc., 10 (1935), 21--25.
}
\bibitem{Davis2008} {M. W. Davis,
The geometry and topology of {C}oxeter groups,
Vol 32, Princeton University Press, Princeton, NJ., 2008.
}
\bibitem{Diosi1984} {L. Di\'osi,
   Gravitation and quantum-mechanical localization of macro-objects,
   Phys. Lett. A, 105 (1984), 199--202.
}
\bibitem{Esteban1982}{ M. J. Esteban, P. -L. Lions,
   Existence and nonexistence results for semilinear elliptic problems in unbounded domains,
   Proc. Roy. Soc. Edinburgh Sect. A, 93 (1982/83), 1--14.
 }
\bibitem{GhimentiVanSchaftingen2016}{ M. Ghimenti, J. Van Schaftingen,
          Nodal solutions for the {C}hoquard equation, J. Funct. Anal., 271 (2016), 107--135.
     }
\bibitem{GMVanSchaftingen2017}{ M. Ghimenti, V. Moroz, J. Van Schaftingen,
         Least action nodal solutions for the quadratic {C}hoquard equation,
         Proc. Amer. Math. Soc., 145 (2017), 737--747.
        }
\bibitem{GT1997}{ D. Gilbarg, N. S. Trudinger,
         Elliptic partial differential equations of second order,
         Classics in Mathematics, Springer-Verlag, Berlin, 2001.
}
\bibitem{GG2012}{D. Giulini, A. Gro\ss ardt,
The Schr\"odinger--Newton equation as a non-relativistic limit of self-gravitating Klein--Gordon and Dirac fields,
Classical Quantum Gravity, 29 (2012), 215010.
}
\bibitem{GuiGuo}{
C. Gui, H. Guo, On nodal solutions of the nonlinear Choquard equation, Adv. Nonlinear Stud., 19 (2019), 677--691.
}
\bibitem{Guo2017}{H. Guo,
      Nonexistence of least energy nodal solutions for
              {S}chr\"{o}dinger--{P}oisson equation, Appl. Math. Lett., 68 (2017), 135--142.
              }

\bibitem{GHPS2019}{L. Guo, T. Hu, S. Peng, W. Shuai,
     Existence and uniqueness of solutions for {C}hoquard equation
              involving {H}ardy--{L}ittlewood--{S}obolev critical exponent,
    Calc. Var. Partial Differential Equations 58 (2019), Art. 128, 34 pp.
    }
\bibitem{ZhihuaHangmultiple2017}{ Z. Huang, J. Yang, W. Yu,
 Multiple nodal solutions of nonlinear Choquard equations,
 Electron. J. Differential Equations (2017), Paper No. 268, 18 pp.
}
\bibitem{Jeanjean1999}{L. Jeanjean,
     On the existence of bounded {P}alais-{S}male sequences and application to a {L}andesman--{L}azer-type problem set on
     $\mathbb{R}^N$,
     Proc. Roy. Soc. Edinburgh Sect. A, 129 (1999), 787--809.
     }
\bibitem{Jeanjean2003}{L. Jeanjean, K. Tanaka,
     A remark on least energy solutions in {$\mathbb R^N$},
  Proc. Amer. Math. Soc., 131 (2003), 2399--2408.
}
\bibitem{J2}{ K. R. W. Jones,
Newtonian Quantum Gravity,
Australian Journal of Physics, 48 (1995), 1055--1082.
}
\bibitem{Kato1972}{T. Kato,
Schr\"odinger operators with singular potentials,
Israel J. Math., 13 (1972), 135--148.
}
\bibitem{L}{ E. H. Lieb,
   Existence and uniqueness of the minimizing solution of Choquard's nonlinear equation,
   Studies in Appl. Math., {57} (1976/77), 93--105.
}
\bibitem{LL}{ E. H. Lieb, M. Loss,
   Analysis,
   Graduate studies in mathematics, vol 14., American Mathematical Society, Providence, R.I., 1997.
}
\bibitem{Lions1980}{ P. -L. Lions,
   The Choquard equation and related questions,
   Nonlinear Anal., 4 (1980), 1063--1072.
}
\bibitem{Lions1984}{ P. -L. Lions,
   The concentration-compactness principle in the calculus of variations. The locally compact case. I,
   Ann. Inst. H. Poincar\'{e} Anal. Non Lin\'{e}aire, 1 (1984), 109--145.
}
\bibitem{Mazhao2010}{ L. Ma, L. Zhao,
 Classification of positive solitary solutions of the nonlinear {C}hoquard equation,
  Arch. Ration. Mech. Anal., 195 (2010), 455--467.
}
\bibitem{MPT}{
   I. M. Moroz, R. Penrose, P. Tod,
   Spherically-symmetric solutions of the Schr\"odinger--Newton equations,
   Classical Quantum Gravity, 15 (1998), 2733--2742.
}
\bibitem{MVJFA}{ V. Moroz, J. Van Schaftingen,
   Groundstates of nonlinear Choquard equations: existence, qualitative properties and decay asymptotics,
   J. Funct. Anal., 265 (2013), 153--184.
}
\bibitem{MVTAMS}{ V. Moroz, J. Van Schaftingen,
   Existence of groundstates for a class of nonlinear Choquard equations,
   Trans. Amer. Math. Soc., 367 (2015), 6557--6579.
}
\bibitem{MVSReview} { V. Moroz, J. Van Schaftingen,
   A guide to the Choquard equation,
   J. Fixed Point Theory Appl., 19 (2017), 773--813.
}
\bibitem{Pekar}{
   S. Pekar,
   Untersuchungen \"{u}ber die Elektronentheorie der Kristalle,
   Akademie-Verlag, Berlin, 1954.
}
\bibitem{Struwe1996}{M. Struwe,
Variational Methods,
Springer-Verlag, Berlin, Heidelberg, 1996.
}
\bibitem{Thomas2017}{A. Thomas,
Geometric and topological aspects of Coxeter groups and buildings,
Vorlesungsmanuskript, 2017.
}
\bibitem{VanSchaftingenXia}{ J. Van Schaftingen, J. Xia,
  Choquard equations under confining external potentials,
  NoDEA Nonlinear Differ. Equ. Appl., 24  (2017), 1--24.
}
\bibitem{XiaWang2019}{Z.-Q. Wang, J. Xia,
  Saddle solutions for the Choquard equation II, Nonlinear Anal. 201 (2020), Art.112053, 25 pp.
  }
\bibitem{Weth2006}{T. Weth,
     Energy bounds for entire nodal solutions of autonomous superlinear equations,
      Calc. Var. Partial Differential Equations, 27 (2006), 421--437.
     }
\bibitem{Willem1996} {M. Willem,
  Minimax Theorems,
  Progress in Nonlinear Differential Equations and their Applications, Vol 24, Birkh\"auser Boston, Inc., Boston, MA., 1996.
}
\bibitem{XiaWang2018}{ J. Xia, Z.-Q. Wang,
  Saddle solutions for the Choquard equation, Calc. Var. Partial Differential Equations 58 (2019), Art. 85, 30 pp.
}
\bibitem{XiaZhang2020}{J. Xia, X. Zhang,
Saddle solutions for the critical Choquard equation,   Calc. Var. Partial Differential Equations 60 (2021), Art. 53, 29 pp.
}
\bibitem{YangZhangZhang2017}{ M. Yang, J. Zhang, Y. Zhang,
Multi-peak solutions for nonlinear {C}hoquard equation with a general nonlinearity,
Commun. Pure Appl. Anal., 16 (2017), 493--512.
}
\bibitem{Zhang2020}{H. Zhang, F. Zhang,
Infinitely many radial and nonradial solutions for a {C}hoquard equation with general nonlinearity,
   Appl. Math. Lett., 102 (2020), 106142, 6 pp.
   }
\end{thebibliography}

\end{document}